\numberwithin{equation}{section}
\newtheorem{thm}{Theorem}[section]
\newtheorem{cor}[thm]{Corollary}
\newtheorem{lem}[thm]{Lemma}
\newtheorem{pro}[thm]{Proposition}
\newtheorem*{thm*}{Theorem}
\theoremstyle{remark}
\newtheorem{rem}[thm]{Remark}
\newtheorem{outc}[thm]{Conclusion}
\theoremstyle{definition}
\newtheorem{exa}[thm]{Example}
\newtheorem{dfn}[thm]{Definition}
\DeclareMathOperator{\D}{d\hspace{-0.25ex}}
\newcommand*{\ascr}{\mathscr A}
\newcommand*{\borel}[1]{{\mathfrak B}(#1)}
\newcommand*{\bscr}{\mathscr B}
\newcommand*{\cbb}{\mathbb C}
\newcommand*{\card}[1]{\mathrm{card}(#1)}
\newcommand*{\esf}{\mathsf{E}}
\newcommand*{\dz}[1]{{\EuScript D}(#1)}
\newcommand*{\dzn}[1]{{\EuScript D}^\infty(#1)}
\newcommand*{\ee}{\mathcal E}
\newcommand*{\fsf}{\mathsf{F}}
\newcommand*{\Ge}{\geqslant}
\newcommand*{\hh}{\mathcal H}
\newcommand*{\hsf}{\mathsf h}
\newcommand*{\is}[2]{\langle#1,#2\rangle}
\newcommand*{\jd}[1]{\EuScript N(#1)}
\newcommand*{\kk}{\mathcal K}
\newcommand*{\Le}{\leqslant}
\newcommand*{\nbb}{\mathbb N}
\newcommand*{\nbbs}{\nbb_{\rm s}}
\newcommand*{\nsf}{\mathsf{N}}
\newcommand*{\ob}[1]{{\EuScript R}(#1)}
\newcommand*{\rbb}{\mathbb R}
\newcommand*{\smalloplus}{\raise0pt
\hbox{$\scriptscriptstyle \oplus$}}
\newcommand*{\xx}{\mathcal X}
\newcommand*{\zbb}{\mathbb Z}
\begin{document}
   \title[On Unbounded Composition Operators]{On Unbounded Composition Operators
in $L^2$-Spaces}
   \author[P.\ Budzy\'{n}ski]{Piotr Budzy\'{n}ski}
   \address{Katedra Zastosowa\'{n} Matematyki,
Uniwersytet Rolniczy w Krakowie, ul.\ Balicka 253c,
PL-30198 Krak\'ow, Poland}
   \email{piotr.budzynski@ur.krakow.pl}
   \author[Z.\ J.\ Jab{\l}o\'nski]{Zenon Jan
Jab{\l}o\'nski}
   \address{Instytut Matematyki,
Uniwersytet Jagiello\'nski, ul.\ \L ojasiewicza 6,
PL-30348 Kra\-k\'ow, Poland}
   \email{Zenon.Jablonski@im.uj.edu.pl}
   \author[I.\ B.\ Jung]{Il Bong Jung}
   \address{Department of Mathematics, Kyungpook National University, Taegu
702-701 Korea}
   \email{ibjung@knu.ac.kr}
   \author[J.\ Stochel]{Jan Stochel}
\address{Instytut Matematyki, Uniwersytet
Jagiello\'nski, ul.\ \L ojasiewicza 6, PL-30348
Kra\-k\'ow, Poland}
   \email{Jan.Stochel@im.uj.edu.pl}
   \thanks{The research of the first, the second
and the fourth authors was supported by the MNiSzW
(Ministry of Science and Higher Education) grant NN201
546438. The research of the first author was partially
supported by the NCN (National Science Center) grant
DEC-2011/01/D/ST1/05805. The third author was
supported by Basic Science Research Program through
the National Research Foundation of Korea (NRF) grant
funded by the Korea Government (MEST) (2009-0093125).}
    \subjclass[2010]{Primary 47B33, 47B20; Secondary
47A05}
   \keywords{Composition operator in $L^2$-space,
normal operator, quasinormal operator, formally normal
operator, subnormal operator, operator generating
Stieltjes moment sequences}
   \begin{abstract}
Fundamental properties of unbounded composition
operators in $L^2$-spaces are studied.
Characterizations of normal and quasinormal
composition operators are provided. Formally normal
composition operators are shown to be normal.
Composition operators generating Stieltjes moment
sequences are completely characterized. The unbounded
counterparts of the celebrated Lambert's
characterizations of subnormality of bounded
composition operators are shown to be false. Various
illustrative examples are supplied.
   \end{abstract}
   \maketitle
\section{Introduction}
Composition operators (in $L^2$-spaces over
$\sigma$-finite spaces), which play an essential role
in Ergodic Theory, turn out to be interesting objects
of Operator Theory. The questions of boundedness,
normality, quasinormality, subnormality, hyponormality
etc.\ of such operators have been answered (cf.\
\cite{dun-sch,sin,nor,wh,ha-wh,sin-vel,lam3,lam1,lam2,di-ca,emb-lam4,
emb-lam3,sin-man,vel-pan,bu-ju-la,B-S1,B-S2}; see also
\cite{emb-lam2,ml,sto,da-st,2xSt} for particular
classes of composition operators). This means that the
theory of bounded composition operators on
$L^2$-spaces is well-developed.

The literature on unbounded composition operators in
$L^2$-spaces is meagre. So far, only the questions of
seminormality, k-expansivity and complete
hyperexpansivity have been studied (cf.\
\cite{ca-hor,jab}). Very little is known about other
properties of unbounded composition operators. To the
best of our knowledge, there is no paper concerning
the issue of subnormality of such operators. It is a
difficult question mainly because Lambert's criterion
for subnormality of bounded operators (cf.\
\cite{lam}) is no longer valid for unbounded ones. In
the present paper we show that the unbounded
counterparts of the celebrated Lambert's
characterizations of subnormality of bounded
composition operators given in \cite{lam1} fail to
hold. This is achieved by proving that a composition
operator satisfies the requirements of Lambert's
characterizations if and only if it generates
Stieltjes moment sequences (see Definition
\ref{maindef} and Theorem \ref{gsms}). Thus, knowing
that there exists a non-subnormal composition operator
which generates Stieltjes moment sequences (see
\cite[Theorem 4.3.3]{j-j-s0}), we obtain the
above-mentioned result (see Conclusion \ref{conclud}).
We point out that there exists a non-subnormal
formally normal operator which generates Stieltjes
moment sequences (for details see \cite[Section
3.2]{b-j-j-s}). This is never the case for composition
operators because, as shown in Theorem \ref{fn=n},
each formally normal composition operator is normal,
and as such subnormal. We refer the reader to
\cite{StSz1,StSz2,StSz3,StSz} for the foundations of
the theory of unbounded subnormal operators (for the
bounded case see \cite{hal1,con}).

The above discussion makes plain the importance of the
question of when $C^\infty$-vectors of a composition
operator form a dense subset of the underlying
$L^2$-space. This and related topics are studied in
Section \ref{secprod}. In Section \ref{sbasic}, we
collect some necessary facts on composition operators.
Illustrative examples are gathered in Section
\ref{exampl}. In Section \ref{injs}, we address the
question of injectivity of composition operators. In
Section \ref{ppolar}, we describe the polar
decomposition of a composition operator. Next, in
Sections \ref{naqns} and \ref{fns}, we characterize
normal, quasinormal and formally normal composition
operators. Finally, in Section \ref{gsmss}, we
investigate composition operators which generate
Stieltjes moment sequences. We conclude the paper with
two appendices. In Appendix \ref{apms} we gather
particular properties of $L^2$-spaces exploited
throughout the paper. Appendix \ref{app2} is mostly
devoted to the operator of conditional expectation
which plays an essential role in our investigations.

{\em Caution.} All measure spaces being considered in
this paper, except for Appendices \ref{apms} and
\ref{app2}, are assumed to be $\sigma$-finite.
   \section{Preliminaries}
Denote by $\cbb$, $\rbb$ and $\rbb_+$ the sets of
complex numbers, real numbers and nonnegative real
numbers, respectively. We write $\zbb_+$ for the set
of all nonnegative integers, and $\nbb$ for the set of
all positive integers. The characteristic function of
a subset $\varDelta$ of a set $X$ will be denoted by
$\chi_\varDelta$. We write $\varDelta\vartriangle
\varDelta^\prime= (\varDelta \setminus
\varDelta^\prime) \cup (\varDelta^\prime \setminus
\varDelta)$ for subsets $\varDelta$ and
$\varDelta^\prime$ of $X$. Given a sequence
$\{\varDelta_n\}_{n=1}^\infty$ of subsets of $X$ and a
subset $\varDelta$ of $X$ such that $\varDelta_n
\subseteq \varDelta_{n+1}$ for every $n\in \nbb$, and
$\varDelta = \bigcup_{n=1}^\infty \varDelta_n$, we
write $\varDelta_n \nearrow \varDelta$ (as $n\to
\infty$). Denote by $\card{X}$ the cardinal number of
$X$. If $X$ is a topological space, then $\borel{X}$
stands for the $\sigma$-algebra of Borel subsets of
$X$.

Let $A$ be an operator in a complex Hilbert space
$\hh$ (all operators considered in this paper are
linear). Denote by $\dz{A}$, $\jd{A}$, $\ob{A}$, $\bar
A$ and $A^*$ the domain, the kernel, the range, the
closure and the adjoint of $A$ (in case they exist).
If $A$ is closed and densely defined, then there
exists a unique partial isometry $U$ on $\hh$ such
that $A=U|A|$ and $\jd{U}=\jd{A}$, where $|A|$ stands
for the square root of $A^*A$ (cf.\ \cite[Section
8.1]{b-s}). Set $\dzn{A} = \bigcap_{n=0}^\infty
\dz{A^n}$. Members of $\dzn{A}$ are called
$C^\infty$-vectors of $A$. Denote by
\mbox{$\|\cdot\|_A$} the {\em graph norm} of $A$,
i.e.,
   \begin{align*}
\|f\|_A^2 := \|f\|^2 + \|Af\|^2, \quad f \in \dz{A}.
   \end{align*}
Given $n \in \zbb_+$, we define the norm
$\|\cdot\|_{A,n}$ on $\dz{A^n}$ by
   \begin{align*}
\|f\|_{A,n}^2 := \sum_{j=0}^n \|A^j f\|^2, \quad f \in
\dz{A^n}.
   \end{align*}
Clearly, for every $n \in \nbb$, \mbox{$(\dz{A^n},
\|\cdot\|_{A^n})$} and \mbox{$(\dz{A^n},
\|\cdot\|_{A,n})$} are inner product spaces (with
standard inner products). A vector subspace $\ee$ of
$\dz{A}$ is called a {\em core} for $A$ if $\ee$ is
dense in $\dz{A}$ with respect to the graph norm of
$A$. Denote by $I$ the identity operator on $\hh$.

By applying Propositions \ref{sum-abstr} and
\ref{clos}, one may obtain a criterion for closedness
of a linear combination of composition operators.
   \begin{pro}\label{sum-abstr}
Let $A_1, \ldots, A_n$ be closed operators in $\hh$
$(n\in \nbb)$. Then $\sum_{j=1}^n A_j$ is closed if
and only if there exists $c\in \rbb_+$ such that
   \begin{align} \label{pb}
\sum_{j=1}^n \|A_j f\|^2 \Le c \bigg(\|f\|^2 +
\|\sum_{j=1}^n A_j f\|^2\bigg), \quad f \in
\bigcap_{j=1}^n \dz{A_j}.
   \end{align}
   \end{pro}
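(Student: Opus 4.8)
The plan is to set $S = \sum_{j=1}^n A_j$ with $\dz{S} = \bigcap_{j=1}^n \dz{A_j}$, and to read \eqref{pb} as the statement that the linear map $T \colon \dz{S} \to \hh^n$ defined by $Tf = (A_1 f, \ldots, A_n f)$ is bounded when $\dz{S}$ carries the graph norm $\|\cdot\|_S$ of $S$. Indeed, since $\|f\|_S^2 = \|f\|^2 + \|Sf\|^2$, the right-hand side of \eqref{pb} is $c\,\|f\|_S^2$ while the left-hand side is $\|Tf\|^2$, so \eqref{pb} holds for some $c \in \rbb_+$ if and only if $T$ is bounded.

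For the implication ``inequality $\Rightarrow$ closed'', I would argue directly. Take $f_m \in \dz{S}$ with $f_m \to f$ and $Sf_m \to g$. Applying \eqref{pb} to $f_m - f_k$ and using that both $\{f_m\}$ and $\{Sf_m\}$ are Cauchy, I obtain that $\{A_j f_m\}$ is Cauchy for each $j$, hence convergent to some $g_j \in \hh$. Since each $A_j$ is closed and $f_m \to f$, it follows that $f \in \dz{A_j}$ and $A_j f = g_j$ for every $j$; thus $f \in \dz{S}$ and $Sf = \sum_{j=1}^n g_j = \lim_m \sum_{j=1}^n A_j f_m = g$, so $S$ is closed.

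For the converse I would invoke the closed graph theorem. First, closedness of $S$ guarantees that $(\dz{S}, \|\cdot\|_S)$ is a Banach (indeed Hilbert) space, and the target $\hh^n$ is Hilbert. To see that $T$ has closed graph, suppose $f_m \to f$ in $\|\cdot\|_S$ and $Tf_m \to (h_1, \ldots, h_n)$ in $\hh^n$; then $f_m \to f$ in $\hh$ and $A_j f_m \to h_j$, so closedness of $A_j$ yields $f \in \dz{A_j}$ and $A_j f = h_j$, i.e.\ $Tf = (h_1, \ldots, h_n)$. The closed graph theorem then makes $T$ bounded, and taking $c = \|T\|^2$ furnishes the constant in \eqref{pb}.

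All the steps are routine; the only substantive input is the closed graph theorem together with the completeness of $\dz{S}$ in the graph norm, which is precisely where the hypothesis that $S$ is closed enters in the forward direction. I expect no genuine obstacle beyond being careful that both spaces between which $T$ acts are complete, so that the closed graph theorem is applicable.
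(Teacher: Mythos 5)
Your proof is correct, and it draws on the same circle of ideas as the paper's (Baire category via graph norms), but the packaging differs enough to compare. The paper equips $\xx=\bigcap_{j=1}^n\dz{A_j}$ with the two norms $\|f\|_*^2=\|f\|^2+\sum_{j=1}^n\|A_jf\|^2$ and the graph norm $\|f\|_A$ of $A=\sum_{j=1}^n A_j$, notes that $(\xx,\|\cdot\|_*)$ is complete because each $A_j$ is closed, that $A$ is closed exactly when $(\xx,\|\cdot\|_A)$ is complete, and that the identity $(\xx,\|\cdot\|_*)\to(\xx,\|\cdot\|_A)$ is continuous; a single application of the inverse mapping theorem then settles both directions at once, since completeness of $(\xx,\|\cdot\|_A)$ holds if and only if the inverse identity is bounded, which is \eqref{pb}. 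You split this into two separate arguments: the implication ``\eqref{pb} implies closed'' is done by a bare-hands Cauchy-sequence argument (which is precisely the elementary content hidden in the paper's completeness assertions), and the implication ``closed implies \eqref{pb}'' uses the closed graph theorem for $Tf=(A_1f,\ldots,A_nf)$ acting from $(\dz{S},\|\cdot\|_S)$ into $\hh^n$. Since $\|Tf\|^2=\|f\|_*^2-\|f\|^2$, boundedness of your $T$ is literally boundedness of the paper's inverse identity map, and your closed-graph verification uses closedness of the $A_j$ at the same spot where the paper uses it to obtain completeness of $(\xx,\|\cdot\|_*)$. What your route buys: the easy direction needs no Baire category at all, and you never have to check that $(\xx,\|\cdot\|_*)$ is itself complete. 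What the paper's route buys: brevity, with one invocation of the inverse mapping theorem yielding the full equivalence.
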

   \begin{proof}
Define the vector space $\xx=\bigcap_{j=1}^n \dz{A_j}$
and the norm \mbox{$\|\cdot\|_{*}$} on $\xx$ by
$\|f\|_{*}^2 = \|f\|^2 + \sum_{j=1}^n \|A_j f\|^2$ for
$f \in \xx$. Since the operators $A_1, \ldots, A_n$
are closed, we deduce that
\mbox{$(\xx,\|\cdot\|_{*})$} is a Hilbert space.
Recall that $A:=\sum_{j=1}^n A_j$ is closed if and
only if \mbox{$(\xx,\|\cdot\|_{A})$} is a Hilbert
space. Since the identity map from
\mbox{$(\xx,\|\cdot\|_{*})$} to
\mbox{$(\xx,\|\cdot\|_{A})$} is continuous, we
conclude from the inverse mapping theorem that
\mbox{$(\xx,\|\cdot\|_{A})$} is a Hilbert space if and
only if \eqref{pb} holds for some $c\in \rbb_+$.
   \end{proof}
A densely defined operator $N$ in $\hh$ is said to be
{\em normal} if $N$ is closed and $N^*N=NN^*$ (or
equivalently if and only if $\dz{N}=\dz{N^*}$ and
$\|Nf\|=\|N^*f\|$ for all $f \in \dz{N}$, see
\cite[Proposition, p.\ 125]{weid}). We say that a
densely defined operator $A$ in $\hh$ is {\em formally
normal} if $\dz{A} \subseteq \dz{A^*}$ and
$\|Af\|=\|A^*f\|$ for all $f \in \dz{A}$ (cf.\
\cite{Cod-1,B-C}). A densely defined operator $A$ in
$\hh$ is called {\em hyponormal} if $\dz{A} \subseteq
\dz{A^*}$ and $\|A^*f\| \Le \|Af\|$ for all $f \in
\dz{A}$ (cf.\ \cite{jj3,ot-sch,sz1}). Clearly, a
closed densely defined operator $A$ in $\hh$ is normal
if and only if both operators $A$ and $A^*$ are
hyponormal. It is well-known that normality implies
formal normality and formal normality implies
hyponormality, but none of these implications can be
reversed in general. We say that a densely defined
operator $S$ in $\hh$ is {\em subnormal} if there
exist a complex Hilbert space $\kk$ and a normal
operator $N$ in $\kk$ such that $\hh \subseteq \kk$
(isometric embedding), $\dz{S} \subseteq \dz{N}$ and
$Sf = Nf$ for all $f \in \dz{S}$.

The members of the next class are related to subnormal
operators. A closed densely defined operator $A$ in
$\hh$ is said to be {\em quasinormal} if $A$ commutes
with the spectral measure $E_{|A|}$ of $|A|$, i.e.,
$E_{|A|}(\varDelta) A \subseteq A E_{|A|}(\varDelta)$
for all $\varDelta \in \borel{\rbb_+}$ (cf.\
\cite{bro,StSz1}). In view of \cite[Proposition
1]{StSz1}, a closed densely defined operator $A$ in
$\hh$ is quasinormal if and only if $U |A| \subseteq
|A|U$, where $A=U|A|$ is the polar decomposition of
$A$. This combined with \cite[Theorem 8.1.5]{b-s}
shows that if $A$ is a normal operator, then $A$ is
quasinormal and $\jd{A}=\jd{A^*}$. In turn,
quasinormality together with the inclusion $\jd{A^*}
\subseteq \jd{A}$ characterizes normality. This result
can be found in \cite{StSz-b}. For the reader's
convenience, we include its proof.
   \begin{thm} \label{quasi2n}
An operator $A$ in $\hh$ is normal if and only if $A$
is quasinormal and $\jd{A^*} \subseteq \jd{A}$.
Moreover, if $A$ is normal, then $\jd{A} = \jd{A^*}$.
   \end{thm}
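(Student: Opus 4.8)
The statement is an equivalence together with a supplementary assertion, and the forward implication plus the ``Moreover'' part are immediate: by \cite[Theorem 8.1.5]{b-s}, already quoted above, normality of $A$ yields that $A$ is quasinormal and that $\jd A = \jd{A^*}$; in particular $\jd{A^*}\subseteq\jd A$, and the equality $\jd A=\jd{A^*}$ is exactly the final claim. So all the work lies in the reverse implication.

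For that, suppose $A$ is quasinormal with $\jd{A^*}\subseteq\jd A$, and let $A=U|A|$ be its polar decomposition, so that $U$ is a partial isometry with $\jd U=\jd A$. By \cite[Proposition 1]{StSz1} quasinormality is equivalent to $U|A|\subseteq|A|U$. The first step of the plan is to upgrade this to genuine commutation of $U$ with the spectral measure $E:=E_{|A|}$: a standard resolvent argument turns $U|A|\subseteq|A|U$ into $U(|A|+\I I)^{-1}=(|A|+\I I)^{-1}U$, and since $(|A|+\I I)^{-1}$ is normal, Fuglede's theorem gives commutation of $U$ with its adjoint as well, hence $E(\varDelta)U=UE(\varDelta)$ for every $\varDelta\in\borel{\rbb_+}$ (and likewise for $U^*$). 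Equivalently, one can read this commutation directly off the definition of quasinormality, $E(\varDelta)A\subseteq AE(\varDelta)$, using that $E(\varDelta)$ already commutes with $|A|$.

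With $U$ commuting with $E$ I would then compute the two products. Since $U^*U$ is the orthogonal projection $P$ onto $\overline{\ob{|A|}}=\jd A^\perp$ and $|A|$ annihilates $\jd A$, one gets $A^*A=|A|U^*U|A|=|A|^2$. Writing $A^*=|A|U^*$ and using that $U$ commutes with $|A|^2$,
\begin{align*}
AA^* = U|A|^2U^* = |A|^2UU^* = |A|^2Q,
\end{align*}
where $Q:=UU^*$ is the projection onto $\overline{\ob A}=\jd{A^*}^\perp$. The hypothesis $\jd{A^*}\subseteq\jd A$ reads $\ob P\subseteq\ob Q$, i.e.\ $P\Le Q$, so $Q-P$ is a projection whose range lies in $\jd A=\jd{|A|^2}$; hence $|A|^2(Q-P)=0$ and $|A|^2Q=|A|^2P=|A|^2$. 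Combining the two displays yields $AA^*=|A|^2=A^*A$, and since a quasinormal operator is closed and densely defined, $A$ is normal. I expect the only real obstacle to be the unbounded bookkeeping: one must justify that $AA^*=U|A|^2U^*$ and $U|A|^2U^*=|A|^2Q$ are equalities of operators with matching domains rather than mere inclusions. This is precisely where the two hypotheses enter—the commutation of $U$ with $E$ handles the formal manipulations, while the kernel inclusion $\jd{A^*}\subseteq\jd A$ is what makes $\dz{|A|^2Q}=\dz{|A|^2}$, via $(I-Q)\hh=\jd{A^*}\subseteq\jd A=\jd{|A|^2}$.
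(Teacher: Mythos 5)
Your argument is correct, and its endgame is the same as the paper's: write $AA^* = U|A|^2U^*$, commute $U$ past $|A|^2$ to get $|A|^2UU^*$, and then use $\jd{A^*}\subseteq\jd{A}$ to erase the projection $UU^*$, since $I-UU^*$ projects onto $\jd{A^*}\subseteq\jd{A}=\jd{|A|^2}\subseteq\dz{|A|^2}$; this yields $AA^*=|A|^2=A^*A$. (The necessity and the ``moreover'' part are dispatched by citation in both proofs.) Where you differ is the commutation step. You pass from $U|A|\subseteq|A|U$ to full commutation of $U$ with the spectral measure $E_{|A|}$ by a resolvent argument plus Fuglede's theorem, and then let that spectral commutation do the domain bookkeeping. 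The paper avoids this machinery entirely: taking adjoints in $U|A|\subseteq|A|U$ gives $U^*|A|\subseteq|A|U^*$, so $U^*$ leaves $\dz{|A|}$ invariant; consequently, for $f\in\dz{|A|U}$ the decomposition $f=U^*Uf+(I-U^*U)f$ exhibits $f$ as a sum of two vectors of $\dz{|A|}$ (the second lies in $\jd{|A|}\subseteq\dz{|A|}$), whence $\dz{|A|U}\subseteq\dz{U|A|}$ and the inclusion $U|A|\subseteq|A|U$ is in fact the equality $U|A|=|A|U$. With a genuine operator equality in hand, $U|A|^2U^*=|A|^2UU^*$ becomes pure algebra, with no domains to track. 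So your route buys a stronger-looking statement (commutation with $E_{|A|}$) at the price of Fuglede and resolvents, while the paper's is elementary and self-contained, exploiting twice the same trick---splitting a vector along a projection whose complementary range sits inside the relevant domain---once for the commutation and once for the kernel inclusion. Your parenthetical alternative, reading the spectral commutation directly off $E_{|A|}(\varDelta)A\subseteq AE_{|A|}(\varDelta)$, also works: verify it on $\ob{|A|}$ and on $\jd{|A|}$ separately, then use density and boundedness.
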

   \begin{proof}
In view of the above discussion it is enough to prove
the sufficiency. First we show that if $A$ is
quasinormal and $A = U|A|$ is its polar decomposition,
then $U |A| = |A|U$. Indeed, by \cite[Proposition
1]{StSz1}, $U |A| \subseteq |A|U$. Taking adjoints, we
get $U^*|A|\subseteq |A|U^*$, which implies that
$U^*(\dz{|A|}) \subseteq \dz{|A|}$. Hence, if $f \in
\dz {|A|U}$, then $U^* Uf\in\dz{|A|}$. Since $I - U^*
U$ is the orthogonal projection of $\hh$ onto
$\jd{|A|}$, we conclude that $f = U^* U f + (I - U^*
U)f \in \dz{|A|}$. This shows that $\dz {|A|U} \subset
\dz{U|A|}$, which implies that $U |A| = |A|U$.

Now suppose that $A$ is quasinormal and $\jd{A^*}
\subseteq \jd{A}$. Since the operators $P := UU^*$ and
$P^\perp := (I-P)$ are the orthogonal projections of
$\hh$ onto $\overline{\ob{A}}$ and $\jd{A^*}$,
respectively, we infer from the inclusion $\jd{A^*}
\subset \jd{A}$ that
   \begin{equation} \label{e8.1b}
\ob{P^\perp} \subset \jd{A}=\jd{|A|} \subset
\dz{|A|^2}.
   \end{equation}
It follows from $U|A|=|A|U$ and $A^* = |A|U^*$ that
   \begin{equation} \label{e8.1c}
AA^* = U|A|^2U^* = |A|^2P.
   \end{equation}
We will show that
   \begin{equation} \label{e8.1d}
|A|^2P = |A|^2.
    \end{equation}
Indeed, if $f\in \hh$, then, by \eqref{e8.1b} and the
equality $f=Pf + P^\perp f$, we see that $Pf\in
\dz{|A|^2}$ if and only if $f \in \dz{|A|^2}$. This
implies that $\dz{|A|^2P} = \dz{|A|^2}$. Using
\eqref{e8.1b} again, we see that $|A|^2 f = |A|^2 Pf$
for every $f \in \dz{|A|^2}$. Hence the equality
\eqref{e8.1d} is valid. Combining \eqref{e8.1c} with
\eqref{e8.1d}, we get $AA^*=A^*A$.

The ``moreover'' part is well-known and easy to prove.
   \end{proof}
Recall that quasinormal operators are subnormal (see
\cite[Theorem 1]{bro} and \cite[Theorem 2]{StSz1}).
The reverse implication does not hold in general.
Clearly, subnormal operators are hyponormal, but not
reversely. It is worth pointing out that formally
normal operators may not be subnormal (cf.\
\cite{Cod,Sch1,sto1}).

A finite complex matrix $[c_{i,j}]_{i,j=0}^n$ is said
to be {\em nonnegative} if
   \begin{align*}
\sum_{i,j=0}^n c_{i,j} \alpha_i \bar\alpha_j \Ge 0,
\quad \alpha_0,\ldots, \alpha_n \in \cbb.
   \end{align*}
If this is the case, then we write
$[c_{i,j}]_{i,j=0}^n \Ge 0$. A sequence
$\{\gamma_n\}_{n=0}^\infty$ of real numbers is said to
be a {\em Stieltjes} moment sequence if there exists a
positive Borel measure $\rho$ on $\rbb_+$ such that
   \begin{align*}
\gamma_n = \int_{\rbb_+} s^n \D \rho(s), \quad n \in
\zbb_+.
   \end{align*}
A sequence $\{\gamma_n\}_{n=0}^\infty \subseteq \rbb$
is said to be {\em positive definite} if for every
$n\in \zbb_+$, $[\gamma_{i+j}]_{i,j=0}^n \Ge 0$. By
the Stieltjes theorem (see \cite[Theorem 6.2.5]{ber}),
we have
   \begin{align} \label{Sti}
   \begin{minipage}{70ex}
a sequence $\{\gamma_n\}_{n=0}^\infty \subseteq \rbb$
is a Stieltjes moment sequence if and only if the
sequences $\{\gamma_n\}_{n=0}^\infty$ and
$\{\gamma_{n+1}\}_{n=0}^\infty$ are positive definite.
   \end{minipage}
   \end{align}
   \begin{dfn}  \label{maindef}
We say that an operator $S$ in $\hh$ {\em generates
Stieltjes moment sequences} if $\dzn{S}$ is dense in
$\hh$ and $\{\|S^n f\|^2\}_{n=0}^\infty$ is a
Stieltjes moment sequence for every $f \in \dzn{S}$.
   \end{dfn}
It is well-known that if $S$ is subnormal, then
$\{\|S^n f\|^2\}_{n=0}^\infty$ is a Stieltjes moment
sequence for every $f \in \dzn{S}$ (see
\cite[Proposition 3.2.1]{b-j-j-s}; see also
Proposition \ref{nth} below). Hence, if $\dzn{S}$ is
dense in $\hh$ and $S$ is subnormal, then $S$
generates Stieltjes moment sequences. It turns out
that the converse implication does not hold in general
(see \cite[Section 3.2]{b-j-j-s}).

The following can be proved analogously to
\cite[Proposition 3.2.1]{b-j-j-s} by using
\eqref{Sti}.
   \begin{pro}\label{nth}
If $S$ is a subnormal operator in $\hh$, then the
following two assertions hold\/{\em :}
   \begin{enumerate}
   \item[(i)]
$\big[\|S^{i+j}f\|^2\big]_{i,j=0}^n \Ge 0$ for all
$f\in \dz{S^{2n}}$ and $n\in \zbb_+$,
   \item[(ii)]
$\big[\|S^{i+j+1}f\|^2\big]_{i,j=0}^n \Ge 0$ for all
$f\in \dz{S^{2n+1}}$ and $n\in \zbb_+$.
   \end{enumerate}
   \end{pro}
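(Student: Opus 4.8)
The plan is to exploit subnormality directly through a normal extension together with the spectral theorem, thereby reducing both assertions to the elementary positivity of certain integrals. Let $N$ be a normal operator in a Hilbert space $\kk$ with $\hh \subseteq \kk$, $\dz{S} \subseteq \dz{N}$ and $Sf=Nf$ for $f\in\dz{S}$. First I would record, by a short induction on $k$, that $\dz{S^k}\subseteq \dz{N^k}$ and $S^k f=N^k f$ for every $f\in\dz{S^k}$: if this holds up to $k-1$ and $f\in\dz{S^k}$, then $f\in\dz{S}\subseteq\dz{N}$ with $Nf=Sf\in\dz{S^{k-1}}$, whence $N^{k-1}(Nf)=S^{k-1}(Sf)=S^k f$, so that $f\in\dz{N^k}$ and $N^k f=S^k f$.

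Next, let $E$ denote the spectral measure of $N$ on $\borel{\cbb}$, and for a fixed $f$ define the finite positive Borel measure $\nu_f$ on $\rbb_+$ as the transport of $\varDelta\mapsto \|E(\varDelta)f\|^2$ under the map $z\mapsto |z|^2$. For $f\in\dz{N^m}$ the spectral theorem yields $\|N^k f\|^2=\int_{\cbb}|z|^{2k}\D\|E(z)f\|^2=\int_{\rbb_+} t^k\D\nu_f(t)$ for every $k\Le m$. Combining this with the first step, whenever $f\in\dz{S^{2n}}$ I obtain $\|S^k f\|^2=\int_{\rbb_+}t^k\D\nu_f(t)$ for all $k\Le 2n$, and the finiteness of $\int_{\rbb_+} t^{2n}\D\nu_f=\|S^{2n}f\|^2$ guarantees that every moment appearing below is finite.

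Assertion (i) then follows at once: for $\alpha_0,\ldots,\alpha_n\in\cbb$,
   \begin{align*}
\sum_{i,j=0}^n \|S^{i+j}f\|^2\,\alpha_i\bar\alpha_j = \int_{\rbb_+}\Big|\sum_{i=0}^n \alpha_i t^i\Big|^2\D\nu_f(t) \Ge 0,
   \end{align*}
since every index satisfies $i+j\Le 2n$. For (ii) the same computation applied to $f\in\dz{S^{2n+1}}$, so that moments up to order $2n+1$ are finite, gives
   \begin{align*}
\sum_{i,j=0}^n \|S^{i+j+1}f\|^2\,\alpha_i\bar\alpha_j = \int_{\rbb_+} t\,\Big|\sum_{i=0}^n \alpha_i t^i\Big|^2\D\nu_f(t) \Ge 0.
   \end{align*}
Equivalently, one may phrase the conclusion via \eqref{Sti}, noting that the two displayed inequalities are precisely the positive definiteness of $\{\|S^k f\|^2\}$ and of its shift. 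I expect the only point demanding care to be the bookkeeping of domains: one must check that the hypothesis $f\in\dz{S^{2n}}$ (respectively $f\in\dz{S^{2n+1}}$) is exactly what renders all the relevant spectral moments finite, and this is matched precisely by the ranges of the indices $i+j$ and $i+j+1$.
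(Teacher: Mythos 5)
Your proof is correct and is essentially the argument the paper intends: the paper gives no detailed proof of this proposition, only the remark that it ``can be proved analogously to \cite[Proposition 3.2.1]{b-j-j-s} by using \eqref{Sti}'', and that argument is precisely yours --- pass to a normal extension $N$, push the spectral measure of $N$ forward under $z\mapsto |z|^2$ to get a finite positive Borel measure $\nu_f$ on $\rbb_+$ whose moments of the relevant orders equal $\|S^kf\|^2$, and read off the nonnegativity of the two matrices from $\int_{\rbb_+}\big|\sum_{i=0}^n\alpha_i t^i\big|^2\D\nu_f\Ge 0$ and $\int_{\rbb_+}t\big|\sum_{i=0}^n\alpha_i t^i\big|^2\D\nu_f\Ge 0$. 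Your domain bookkeeping --- the induction showing $\dz{S^k}\subseteq\dz{N^k}$ with $S^kf=N^kf$, and the observation that $f\in\dz{S^{2n}}$ (resp.\ $f\in\dz{S^{2n+1}}$) makes all moments of order at most $2n$ (resp.\ $2n+1$) finite --- is exactly the care that the finite-domain formulation requires.
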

%
   For the reader's convenience, we state a theorem
which is occasionally called the Mittag-Leffler
theorem (cf.\ \cite[Lemma 1.1.2]{Sch}).
   \begin{thm} \label{Mit-Lef}
Let $\{\ee_n\}_{n=0}^\infty$ be a sequence of Banach
spaces such that for every $n\in \zbb_+$, $\ee_{n+1}$
is a vector subspace of $\ee_n$, $\ee_{n+1}$ is dense
in $\ee_n$ and the embedding map of $\ee_{n+1}$ into
$\ee_{n}$ is continuous. Then
$\bigcap_{n=0}^\infty\ee_n$ is dense in each space
$\ee_k$,~ $k\in\zbb_+$.
   \end{thm}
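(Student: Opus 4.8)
The plan is to reduce to the case $k=0$ and then to build, by a diagonal-type construction, a single element of $\bigcap_{n=0}^\infty \ee_n$ that is close to a prescribed vector of $\ee_0$. First I would observe that since $\ee_{n+1}$ is a subspace of $\ee_n$ for every $n$, the first $k$ spaces contribute nothing to the intersection, so $\bigcap_{n=0}^\infty \ee_n = \bigcap_{n=k}^\infty \ee_n$; moreover the tail $\{\ee_n\}_{n=k}^\infty$ satisfies the same three hypotheses with $\ee_k$ playing the role of the base space. Hence it suffices to prove that $\bigcap_{n=0}^\infty \ee_n$ is dense in $\ee_0$, the general case following by relabeling. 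Write $\|\cdot\|_n$ for the norm of $\ee_n$; continuity of the embeddings provides, for each pair $j \le n$, a constant $c_{j,n}$ with $\|x\|_j \le c_{j,n}\|x\|_n$ for all $x \in \ee_n$.

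Next, fixing $a \in \ee_0$ and $\varepsilon > 0$, I would construct recursively vectors $a_n \in \ee_n$ with $a_0 = a$. Given $a_n \in \ee_n$, I would use the density of $\ee_{n+1}$ in $\ee_n$ to choose $a_{n+1} \in \ee_{n+1}$ so close to $a_n$ in the norm $\|\cdot\|_n$ that, through the finitely many constants $c_{0,n}, \ldots, c_{n,n}$,
\[
\|a_{n+1} - a_n\|_j < \frac{\varepsilon}{2^{n+1}}, \qquad j = 0, 1, \ldots, n.
\]
Demanding smallness in all the coarser norms at once is legitimate because only finitely many of them occur at each step, and it is exactly what makes the scheme work: for any fixed $k$ and all $m > n \ge k$ one then obtains $\|a_m - a_n\|_k \le \sum_{i=n}^{m-1}\|a_{i+1}-a_i\|_k < \varepsilon\, 2^{-n}$, so that $\{a_n\}_{n \ge k}$ is Cauchy in the Banach space $\ee_k$ and converges there.

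Finally I would check that these limits amount to a single element. Since convergence in $\ee_{k+1}$ forces convergence in $\ee_k$ to the image of the same vector (continuity of the embedding), the limits obtained in the various $\ee_k$ coincide as one vector $y$ lying in every $\ee_k$, that is, $y \in \bigcap_{n=0}^\infty \ee_n$; and the telescoping estimate at $k=0$ yields $\|y - a\|_0 \le \varepsilon$, which gives density in $\ee_0$. The step that needs care — and is really the crux of the argument — is the recursive choice itself: one must approximate in $\ee_n$ while simultaneously keeping control of \emph{all} previously appearing norms, so that the resulting sequence is Cauchy not merely in $\ee_0$ but in every $\ee_k$ at once, which is what guarantees that the limit persists in the full intersection rather than escaping it.
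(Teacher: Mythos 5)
Your proof is correct. The paper itself contains no proof of Theorem \ref{Mit-Lef}; it merely states the result with a citation to \cite[Lemma 1.1.2]{Sch}, and your argument --- recursive approximation in which each step is made small in all finitely many coarser norms at once, so that the resulting sequence is Cauchy in every $\ee_k$ simultaneously and its common limit therefore lies in $\bigcap_{n=0}^\infty\ee_n$ --- is precisely the standard proof found in that reference.
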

   \section{\label{sbasic}Basic properties of composition operators}
From now on, except for Appendices \ref{apms} and
\ref{app2}, $(X, \ascr, \mu)$ always stands for a
$\sigma$-finite measure space. We shall abbreviate the
expressions ``almost everywhere with respect to $\mu$''
and ``for $\mu$-almost every $x$'' to ``a.e.\ $[\mu]$''
and ``for $\mu$-a.e.\ $x$'', respectively. As usual,
$L^2(\mu)=L^2(X,\ascr, \mu)$ denotes the Hilbert space
of all square integrable complex functions on $X$. The
norm and the inner product of $L^2(\mu)$ are denoted by
\mbox{$\|\cdot\|$} and $\is{\cdot}{\mbox{-}}$,
respectively. Let $\phi$ be an $\ascr$-{\em measurable}
transformation\footnote{\;By a transformation of $X$ we
understand a map from $X$ to $X$.} of $X$, i.e.,
$\phi^{-1}(\varDelta) \in \ascr$ for all $\varDelta \in
\ascr$. Denote by $\mu\circ \phi^{-1}$ the positive
measure on $\ascr$ given by $\mu\circ
\phi^{-1}(\varDelta)=\mu(\phi^{-1}(\varDelta))$ for all
$\varDelta \in \ascr$. We say that $\phi$ is {\em
nonsingular} if $\mu\circ \phi^{-1}$ is absolutely
continuous with respect to $\mu$. It is easily seen
that if $\phi$ is nonsingular, then the mapping
$C_\phi\colon L^2(\mu) \supseteq \dz{C_\phi}\to
L^2(\mu)$ given by
   \begin{align} \label{1}
\dz{C_\phi} = \{f \in L^2(\mu) \colon f \circ \phi \in
L^2(\mu)\} \text{ and } C_\phi f = f \circ \phi \text{
for } f \in \dz{C_\phi},
   \end{align}
is well-defined and linear. Such an operator is called
a {\em composition operator} induced by $\phi$; the
transformation $\phi$ will be referred to as a {\em
symbol} of $C_\phi$. Note that if the operator
$C_\phi$ given by \eqref{1} is well-defined, then the
transformation $\phi$ is nonsingular.

{\em Convention.} For the remainder of this paper,
whenever $C_{\phi}$ is mentioned the transformation
$\phi$ is assumed to be nonsingular.

If $\phi$ is nonsingular, then by the Radon-Nikodym
theorem there exists a unique (up to sets of measure
zero) $\ascr$-measurable function $\mathsf \hsf_\phi
\colon X \to [0,\infty]$ such that
   \begin{align} \label{1.5}
\mu\circ \phi^{-1}(\varDelta) = \int_\varDelta
\hsf_\phi \D\mu, \quad \varDelta \in \ascr.
   \end{align}
Here and later on $\phi^n$ stands for the $n$-fold
composition of $\phi$ with itself if $n \Ge 1$ and
$\phi^0$ for the identity transformation of $X$. We
also write $\phi^{-n} (\varDelta) :=
(\phi^n)^{-1}(\varDelta)$ for $\varDelta \in \ascr$
and $n\in \zbb_+$. Note that $\hsf_{\phi^0} =1$ a.e.\
$[\mu]$. It is clear that the composition $\phi_1
\circ \cdots \circ \phi_n$ of finitely many
nonsingular transformations $\phi_1, \ldots, \phi_n$
of $X$ is a nonsingular transformation and
   \begin{align} \label{prnst}
C_{\phi_n} \cdots C_{\phi_1} \subseteq C_{\phi_1\circ
\cdots \circ \phi_n}, \quad n \in \nbb.
   \end{align}

Now we construct an $\ascr$-measurable transformation
$\phi$ of $X$ such that $\phi$ is not nonsingular
while $\phi^2$ is nonsingular.
   \begin{exa}
Set $X=\{0\}\cup\{1\}\cup [2,3]$. Let
$\ascr=\{\varDelta \cap X\colon \varDelta \in
\borel{\rbb_+}\}$. Define the finite Borel measure
$\mu$ on $X$ by
   \begin{align*}
\mu(\varDelta)= \chi_\varDelta(0) + \chi_\varDelta(1)
+ m(\varDelta\cap [2,3]), \quad \varDelta \in \ascr,
   \end{align*}
where $m$ stands for the Lebesgue measure on $\rbb$.
Let $\phi$ be an $\ascr$-measurable transformation of
$X$ given by $\phi(0)=2$, $\phi(1)=1$ and $\phi(x)=1$
for $x \in [2,3]$. Since $\mu(\{2\})=0$ and $(\mu\circ
\phi^{-1})(\{2\})=1$, we see that $\phi$ is not
nonsingular. However, $\phi^2$ is nonsingular because
$\phi^2(x)=1$ for all $x\in X$ and $\mu(\{1\})>0$.
   \end{exa}
Suppose that $\phi$ is a nonsingular transformation of
$X$. In view of the measure transport theorem
(\cite[Theorem C, p.\ 163]{hal2}), we have
   \begin{align}  \label{base}
\int_X |f \circ \phi|^2 \D \mu = \int_X |f|^2
\hsf_\phi \D \mu \text{ for every $\ascr$-measurable
function } f\colon X \to \cbb.
   \end{align}
This implies that
   \begin{gather} \label{2}
\dz{C_\phi} = L^2((1 + \hsf_\phi)\D\mu), \quad
\|f\|_{C_\phi}^2 = \int_X |f|^2 (1 + \hsf_\phi) \D\mu,
   \\       \label{3}
\dz{C_{\phi}^n} = L^2\Big(\Big(\sum_{j=0}^n
\hsf_{\phi^j}\Big)\D\mu\Big), \quad \|f\|_{C_\phi,n}^2
= \int_X |f|^2 \Big(\sum_{j=0}^n \hsf_{\phi^j}\Big)
\D\mu, \quad n \in \zbb_+.
   \end{gather}
Moreover, if $\phi_1, \ldots, \phi_n$ are nonsingular
transformations of $X$ ($n \in \nbb$), then
   \begin{align} \label{prodig}
\dz{C_{\phi_n} \cdots C_{\phi_1}} = L^2((1 +
\sum_{j=1}^n \hsf_{\phi_1\circ \cdots \circ
\phi_j})\D\mu).
   \end{align}
The following proposition is somewhat related to
\cite[p.\ 664]{dun-sch} and \cite[Lemma 6.1]{ca-hor}.
   \begin{pro} \label{clos}
Let $\phi$ be a nonsingular transformation of $X$.
Then $C_\phi$ is a closed operator and
   \begin{align} \label{clos2}
\text{$\overline{\dz{C_\phi}} = \chi_{\fsf_{\phi}}
L^2(\mu)$ with $\mathsf{F}_{\phi} = \big\{x \in
X\colon \hsf_\phi(x) < \infty\big\}$.}
   \end{align}
Moreover, the following conditions are equivalent{\em
:}
   \begin{enumerate}
   \item[(i)] $C_\phi$ is densely defined,
   \item[(ii)] $\hsf_\phi < \infty$ a.e.\ $[\mu]$,
   \item[(iii)] the measure $\mu\circ \phi^{-1}$
is $\sigma$-finite.
   \end{enumerate}
   \end{pro}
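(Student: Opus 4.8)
The plan is to read off both closedness and the closure formula from the identities \eqref{base} and \eqref{2}, and then to derive the three equivalences by elementary measure theory.

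First, for closedness I would invoke \eqref{2}, which exhibits $\dz{C_\phi}$ equipped with the graph norm as isometrically the space $L^2((1+\hsf_\phi)\D\mu)$: indeed $\|f\|_{C_\phi}^2 = \int_X |f|^2(1+\hsf_\phi)\D\mu$, and since $1 + \hsf_\phi \Ge 1$ the measures $\mu$ and $(1+\hsf_\phi)\D\mu$ share the same null sets, so the $\mu$-classes and the $(1+\hsf_\phi)\D\mu$-classes coincide. As $L^2$ of any positive measure is complete, $(\dz{C_\phi},\|\cdot\|_{C_\phi})$ is complete; equivalently the graph of $C_\phi$ is closed in $L^2(\mu)\times L^2(\mu)$, so $C_\phi$ is closed. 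A hands-on alternative is to take $f_n \in \dz{C_\phi}$ with $f_n \to f$ and $C_\phi f_n \to g$ in $L^2(\mu)$, pass to an a.e.\ convergent subsequence, use $\mu\circ\phi^{-1}\ll\mu$ to transport a.e.\ convergence through $\phi$, and conclude $g = f\circ\phi$.

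Next, for the closure formula I would prove the two inclusions separately. For $\overline{\dz{C_\phi}} \subseteq \chi_{\fsf_\phi}L^2(\mu)$, note that if $f\in \dz{C_\phi}$ then $\int_X |f|^2 \hsf_\phi \D\mu = \|C_\phi f\|^2 < \infty$ by \eqref{base}, which forces $f = 0$ a.e.\ on $\{\hsf_\phi = \infty\} = X\setminus \fsf_\phi$; hence $\dz{C_\phi}\subseteq \chi_{\fsf_\phi}L^2(\mu)$, and the latter is a closed subspace of $L^2(\mu)$. For the reverse inclusion, given $f \in \chi_{\fsf_\phi}L^2(\mu)$ I would set $f_n = \chi_{\{\hsf_\phi \Le n\}} f$; then $\int_X |f_n|^2\hsf_\phi \D\mu \Le n\|f\|^2 < \infty$ shows $f_n \in \dz{C_\phi}$ by \eqref{2}, while $f_n \to f$ in $L^2(\mu)$ by dominated convergence, using $f = 0$ a.e.\ on $X\setminus\fsf_\phi$ so that $|f - f_n|^2 = |f|^2\chi_{\{n < \hsf_\phi < \infty\}} \to 0$ pointwise. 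This yields $\chi_{\fsf_\phi}L^2(\mu)\subseteq \overline{\dz{C_\phi}}$, hence equality.

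Finally, the equivalences: (i)$\Leftrightarrow$(ii) follows from the closure formula, since $C_\phi$ is densely defined iff $\chi_{\fsf_\phi}L^2(\mu) = L^2(\mu)$, and $\sigma$-finiteness of $\mu$ makes this equivalent to $\mu(X\setminus\fsf_\phi) = 0$, i.e.\ to $\hsf_\phi < \infty$ a.e.\ $[\mu]$ (if $\mu(X\setminus\fsf_\phi)>0$, pick $\varDelta\subseteq X\setminus\fsf_\phi$ with $0<\mu(\varDelta)<\infty$ and note $\chi_\varDelta\notin \chi_{\fsf_\phi}L^2(\mu)$). For (ii)$\Leftrightarrow$(iii) I would work from \eqref{1.5}: if $\hsf_\phi < \infty$ a.e.\ $[\mu]$, writing $X = \bigcup_k X_k$ with $\mu(X_k) < \infty$ and setting $X_{k,n} = X_k \cap \{\hsf_\phi \Le n\}$ gives $\mu\circ\phi^{-1}(X_{k,n}) \Le n\mu(X_k) < \infty$, with $\bigcup_{k,n} X_{k,n}$ exhausting $X$ up to the $\mu\circ\phi^{-1}$-null set $\{\hsf_\phi = \infty\}$, so $\mu\circ\phi^{-1}$ is $\sigma$-finite; conversely, on $\{\hsf_\phi = \infty\}$ every set of positive $\mu$-measure has infinite $\mu\circ\phi^{-1}$-measure, so $\sigma$-finiteness of $\mu\circ\phi^{-1}$ forces $\mu(\{\hsf_\phi = \infty\}) = 0$. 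I expect the only real care to be needed in handling the value $+\infty$ of $\hsf_\phi$ consistently, both in identifying $\dz{C_\phi}$ and in the $\sigma$-finiteness argument; the rest is routine once \eqref{base} and \eqref{2} are in hand.
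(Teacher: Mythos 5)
Your proof is correct and follows essentially the same route as the paper's: closedness via the identification \eqref{2} of the graph-norm space with the complete space $L^2((1+\hsf_\phi)\D\mu)$, the closure formula via truncation by $\chi_{\{\hsf_\phi \Le n\}}$, and the equivalences from \eqref{clos2}, \eqref{1.5} and the $\sigma$-finiteness of $\mu$. You simply spell out details (the coincidence of null sets, the dominated convergence step, and both directions of (ii)$\Leftrightarrow$(iii)) that the paper leaves implicit.
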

   \begin{proof}
Applying \eqref{2}, we get $C_\phi =
\overline{C_\phi}$ and $\overline{\dz{C_\phi}}
\subseteq \chi_{\fsf_{\phi}} L^2(\mu)$. To prove the
opposite inclusion $\chi_{\fsf_{\phi}}
L^2(\mu)\subseteq \overline{\dz{C_\phi}}$, take $f\in
L^2(\mu)$ such that $f|_{X \setminus \fsf_\phi} = 0$
a.e.\ $[\mu]$, and set $X_{n}=\{x \in X\colon
\hsf_\phi(x) \Le n\}$ for $n \in \nbb$. Noting that
$X_n \nearrow \fsf_\phi$ as $n\to \infty$, we see that
$\int_{X} |\chi_{X_{n}}f|^2 (1+\hsf_\phi) \D \mu <
\infty$ for all $n\in \nbb$, and $\lim_{n\to
\infty}\int_X|f - \chi_{X_n}f|^2 \D\mu=0$, which
completes the proof of \eqref{clos2}.

(i)$\Leftrightarrow$(ii) Employ \eqref{clos2}.

(ii)$\Leftrightarrow$(iii) Apply \eqref{1.5} and the
assumption that $\mu$ is $\sigma$-finite.
   \end{proof}
   \begin{cor}
Suppose that $\phi_1, \ldots, \phi_n$ are nonsingular
transformations of $X$ and $\lambda_1, \ldots,
\lambda_n$ are nonzero complex numbers
$($$n\in\nbb$$)$. Then $\sum_{j=1}^n \lambda_j
C_{\phi_j}$ is densely defined if and only if
$C_{\phi_k}$ is densely defined for every $k=1,
\ldots, n$.
   \end{cor}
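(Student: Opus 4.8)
The plan is to reduce everything to the closure formula \eqref{clos2} of Proposition \ref{clos}. Write $A := \sum_{j=1}^n \lambda_j C_{\phi_j}$, so that $\dz{A} = \bigcap_{j=1}^n \dz{C_{\phi_j}}$ (the nonvanishing of the $\lambda_j$ plays no role here; it only guarantees that the sum really has $n$ summands). First I would use \eqref{2} to compute this intersection. Since $f \in \dz{C_{\phi_j}}$ means $f \in L^2(\mu)$ with $\int_X |f|^2 \hsf_{\phi_j} \D\mu < \infty$, a function $f \in L^2(\mu)$ lies in $\dz{A}$ if and only if $\int_X |f|^2 \hsf_{\phi_j} \D\mu < \infty$ for every $j$, equivalently $\int_X |f|^2 \big(\sum_{j=1}^n \hsf_{\phi_j}\big) \D\mu < \infty$. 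Hence
\[
\dz{A} = L^2\Big(\Big(1 + \sum_{j=1}^n \hsf_{\phi_j}\Big)\D\mu\Big).
\]

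Next I would show that $\overline{\dz{A}} = \chi_F L^2(\mu)$, where $F := \big\{x \in X \colon \sum_{j=1}^n \hsf_{\phi_j}(x) < \infty\big\} = \bigcap_{j=1}^n \fsf_{\phi_j}$ (the last equality holds because the $\hsf_{\phi_j}$ take values in $[0,\infty]$, so their sum is finite exactly when every term is). The inclusion ``$\subseteq$'' is immediate: from $\dz{A} \subseteq \dz{C_{\phi_j}}$ and \eqref{clos2} we get $\overline{\dz{A}} \subseteq \bigcap_{j=1}^n \overline{\dz{C_{\phi_j}}} = \bigcap_{j=1}^n \chi_{\fsf_{\phi_j}} L^2(\mu) = \chi_F L^2(\mu)$. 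For the reverse inclusion I would copy the truncation argument from the proof of Proposition \ref{clos}: given $f \in L^2(\mu)$ with $f|_{X\setminus F} = 0$ a.e.\ $[\mu]$, set $X_n = \big\{x \in X \colon \sum_{j=1}^n \hsf_{\phi_j}(x) \Le n\big\}$; then $X_n \nearrow F$, each $\chi_{X_n} f$ lies in $\dz{A}$ because $\sum_{j} \hsf_{\phi_j}$ is bounded by $n$ on $X_n$, and $\chi_{X_n} f \to f$ in $L^2(\mu)$ by dominated convergence. This yields $f \in \overline{\dz{A}}$ and hence the desired equality.

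Finally I would read off the conclusion. The operator $A$ is densely defined precisely when $\chi_F L^2(\mu) = L^2(\mu)$, i.e.\ when $\mu(X \setminus F) = 0$. Since $X \setminus F = \bigcup_{j=1}^n (X \setminus \fsf_{\phi_j})$, this happens if and only if $\mu(X \setminus \fsf_{\phi_j}) = 0$, that is, $\hsf_{\phi_j} < \infty$ a.e.\ $[\mu]$, for every $j$; by the equivalence (i)$\Leftrightarrow$(ii) of Proposition \ref{clos} this is exactly the requirement that $C_{\phi_k}$ be densely defined for every $k = 1, \ldots, n$. I do not anticipate a genuine obstacle: the only step that is not purely formal is the reverse inclusion in the second paragraph, and that is a verbatim adaptation of the truncation argument already carried out for a single composition operator in Proposition \ref{clos}.
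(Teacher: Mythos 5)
Your proof is correct. It starts with the same reduction as the paper --- via \eqref{2}, the domain of $A=\sum_{j=1}^n\lambda_j C_{\phi_j}$ is the weighted space $L^2\big(\big(1+\sum_{j=1}^n\hsf_{\phi_j}\big)\D\mu\big)$ --- but from there the two arguments diverge slightly. The paper settles the ``if'' direction in one stroke by citing Lemma \ref{l1.11} (applied to $\rho_1\equiv 1$ and $\rho_2=1+\sum_{j=1}^n\hsf_{\phi_j}$, which is finite a.e.\ $[\mu]$ by Proposition \ref{clos}), and the ``only if'' direction by the trivial inclusion $\dz{A}\subseteq\dz{C_{\phi_k}}$; you instead establish the sharper intermediate statement $\overline{\dz{A}}=\chi_F L^2(\mu)$ with $F=\bigcap_{j=1}^n\fsf_{\phi_j}$, i.e.\ you extend the closure formula \eqref{clos2} from a single composition operator to a linear combination, and then read off both implications from this formula together with the $\sigma$-finiteness of $\mu$. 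What your route buys is a self-contained argument (your truncation replaces the appeal to Lemma \ref{l1.11}, whose own proof is a truncation of the same kind) plus a by-product the paper's proof does not record, namely the explicit description of $\overline{\dz{A}}$; what it costs is length, since the paper's proof amounts to two citations. One cosmetic point: you reuse the letter $n$ both for the fixed number of transformations and for the truncation index; write $X_m=\big\{x\in X\colon \sum_{j=1}^n\hsf_{\phi_j}(x)\Le m\big\}$ for $m\in\nbb$ to avoid the clash.
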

   \begin{proof}
By \eqref{2}, ${\EuScript D}(\sum_{j=1}^n \lambda_j
C_{\phi_j}) = L^2((1 + \sum_{j=1}^n \hsf_{\phi_j})\D
\mu)$, and thus the ``if'' part follows from
Proposition \ref{clos} and Lemma \ref{l1.11}. The
``only if'' part is obvious.
   \end{proof}
   \section{\label{secprod}Products of composition operators}
First we give necessary and sufficient conditions for
a product of composition operators to be densely
defined.
   \begin{pro} \label{prod}
Let $\phi_1, \ldots, \phi_n$ be nonsingular
transformations of $X$ $(2 \Le n < \infty)$. Then the
following assertions hold\/{\em :}
   \begin{enumerate}
   \item[(i)] $C_{\phi_n}
\cdots C_{\phi_1}$ is a closable operator,
   \item[(ii)] $C_{\phi_n} \cdots C_{\phi_1}$  is
densely defined if and only if $C_{\phi_1\circ \cdots
\circ \phi_k}$ is densely defined for every $k=1,
\ldots, n$,
   \item[(iii)] if $C_{\phi_{n-1}} \cdots C_{\phi_1}$ is densely
defined, then
   \begin{align}  \label{numerek}
C_{\phi_1\circ \cdots \circ \phi_k} =
\overline{C_{\phi_k} \cdots C_{\phi_1}}, \quad k=1,
\ldots, n,
   \end{align}
   \item[(iv)] if $C_{\phi_1 \circ \cdots \circ \phi_n}$
is densely defined, then so is the operator
$C_{\phi_n}$,
   \item[(v)] if $C_{\phi_n} \cdots
C_{\phi_1}$ is densely defined, then so are the
operators $C_{\phi_1}$, \ldots, $C_{\phi_n}$.
   \end{enumerate}
   \end{pro}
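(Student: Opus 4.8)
The plan is to reduce the whole proposition to the two domain identities \eqref{prnst} and \eqref{prodig}, together with Proposition~\ref{clos} and one elementary fact about pushforward measures. Throughout I write $\Psi_k:=\phi_1\circ\cdots\circ\phi_k$, so that $\Psi_k=\Psi_{k-1}\circ\phi_k$, and I use \eqref{prodig} in the form $\dz{C_{\phi_k}\cdots C_{\phi_1}}=L^2\big((1+\sum_{j=1}^k\hsf_{\Psi_j})\D\mu\big)$. For (i), \eqref{prnst} gives $C_{\phi_n}\cdots C_{\phi_1}\subseteq C_{\Psi_n}$, and $C_{\Psi_n}$ is closed by Proposition~\ref{clos}; an operator contained in a closed one is closable (if $f_m\to0$ and $C_{\phi_n}\cdots C_{\phi_1}f_m\to g$, then $C_{\Psi_n}f_m\to g$, so closedness forces $g=0$), and no dense-definedness is needed. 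For (ii), \eqref{prodig} identifies $\dz{C_{\phi_n}\cdots C_{\phi_1}}=L^2\big((1+\sum_{j=1}^n\hsf_{\Psi_j})\D\mu\big)$, which by Lemma~\ref{l1.11} is dense in $L^2(\mu)$ if and only if the (nonnegative) weight is finite a.e.~$[\mu]$, i.e.\ every $\hsf_{\Psi_k}<\infty$ a.e.~$[\mu]$; by Proposition~\ref{clos}, (i)$\Leftrightarrow$(ii), this is exactly dense-definedness of each $C_{\Psi_k}$.

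For (iii) the inclusion $\subseteq$ in \eqref{numerek} is free from \eqref{prnst} and closedness, so the content is that $\dz{C_{\phi_k}\cdots C_{\phi_1}}$ is a core for $C_{\Psi_k}$, which I would establish by truncation. The hypothesis that $C_{\phi_{n-1}}\cdots C_{\phi_1}$ is densely defined gives, via (ii), that $\hsf_{\Psi_j}<\infty$ a.e.\ for $j\le n-1$. Fix $k\le n$ and $f\in\dz{C_{\Psi_k}}=L^2((1+\hsf_{\Psi_k})\D\mu)$; since $\int|f|^2\hsf_{\Psi_k}\,\D\mu<\infty$, the function $f$ vanishes a.e.\ on $\{\hsf_{\Psi_k}=\infty\}$. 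Put $X_m=\{x:\hsf_{\Psi_j}(x)\le m,\ j=1,\dots,k\}$ and $f_m=\chi_{X_m}f$. On $X_m$ every $\hsf_{\Psi_j}\le m$, so $f_m\in L^2\big((1+\sum_{j=1}^k\hsf_{\Psi_j})\D\mu\big)=\dz{C_{\phi_k}\cdots C_{\phi_1}}$. Because $\hsf_{\Psi_j}<\infty$ a.e.\ for $j\le n-1$ and $f=0$ a.e.\ where $\hsf_{\Psi_k}=\infty$, one gets $X_m\cap\supp f\nearrow\supp f$ a.e., hence $f_m\to f$ pointwise a.e.; dominated convergence with dominating function $|f|^2(1+\hsf_{\Psi_k})$ then gives $\|f-f_m\|_{C_{\Psi_k}}\to0$, which is precisely the graph-density yielding \eqref{numerek}.

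For (iv) and (v) the key elementary fact is that if a pushforward $\nu\circ\psi^{-1}$ is $\sigma$-finite then so is $\nu$: a cover $\{Y_i\}$ of $X$ with $(\nu\circ\psi^{-1})(Y_i)<\infty$ produces the cover $\{\psi^{-1}(Y_i)\}$ of $X$ with $\nu(\psi^{-1}(Y_i))<\infty$. Since $\Psi_k=\Psi_{k-1}\circ\phi_k$, one has $\mu\circ\Psi_k^{-1}=(\mu\circ\phi_k^{-1})\circ\Psi_{k-1}^{-1}$, the pushforward of $\mu\circ\phi_k^{-1}$ under $\Psi_{k-1}$; hence $\sigma$-finiteness of $\mu\circ\Psi_k^{-1}$ forces $\sigma$-finiteness of $\mu\circ\phi_k^{-1}$, i.e.\ (Proposition~\ref{clos}, (i)$\Leftrightarrow$(iii)) dense-definedness of $C_{\phi_k}$. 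Taking $k=n$ gives (iv). For (v), dense-definedness of $C_{\phi_n}\cdots C_{\phi_1}$ yields, through (ii), that every $C_{\Psi_k}$ is densely defined, i.e.\ every $\mu\circ\Psi_k^{-1}$ is $\sigma$-finite, and the pushforward fact then makes each $C_{\phi_k}$ densely defined for $k=1,\dots,n$.

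The hard part is (iii): one must run the approximation through the top level $k=n$, where $\hsf_{\Psi_n}$ may be infinite on a set of positive measure, and the point is that membership in $\dz{C_{\Psi_n}}$ already forces the approximand to vanish there, so the truncation still exhausts $\supp f$. A secondary point to watch throughout is the composition-order bookkeeping $\Psi_k=\Psi_{k-1}\circ\phi_k$ (so $\phi_k$ is applied first), which is exactly what makes the identity $\mu\circ\Psi_k^{-1}=(\mu\circ\phi_k^{-1})\circ\Psi_{k-1}^{-1}$ come out as a pushforward under $\Psi_{k-1}$ and thereby drives (iv) and (v).
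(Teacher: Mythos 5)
Your proof is correct and takes essentially the same route as the paper: parts (i), (ii), (iv) and (v) are argued identically (via \eqref{prnst}, \eqref{prodig}, Proposition \ref{clos}, Lemma \ref{l1.11}, and the fact that $\sigma$-finiteness of a pushforward measure forces $\sigma$-finiteness of the original measure), and your (iii) rests on the same two observations as the paper's, namely graph-norm density of $\dz{C_{\phi_k}\cdots C_{\phi_1}}$ in $\dz{C_{\phi_1\circ\cdots\circ\phi_k}}$ together with the fact that members of the latter domain vanish a.e.\ on the set where $\hsf_{\phi_1\circ\cdots\circ\phi_n}=\infty$. The only cosmetic difference is that in (iii) you prove the density by a direct truncation and dominated-convergence argument, whereas the paper restricts to $Y=\{\hsf_{\phi_1\circ\cdots\circ\phi_n}<\infty\}$ via a unitary and invokes Lemma \ref{l1.11}, whose own proof is precisely such a truncation.
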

   \begin{proof}
(i) Apply \eqref{prnst} and Proposition \ref{clos}.

(ii) To prove the ``if'' part, assume that
$C_{\phi_1\circ \cdots \circ \phi_k}$ is densely
defined for $k=1, \ldots, n$. It follows from
Proposition \ref{clos} that $\hsf_{\phi_1\circ \cdots
\circ \phi_k} < \infty$ a.e.\ $[\mu]$ for $k=1,\ldots,
n$. Applying \eqref{prodig} and Lemma \ref{l1.11} to
$\rho_1\equiv 1$ and $\rho_2 = 1 + \sum_{j=1}^n
\hsf_{\phi_1\circ \cdots \circ \phi_j}$ we get
$\overline{\dz{C_{\phi_n} \cdots
C_{\phi_1}}}=L^2(\mu)$. The ``only if'' part follows
from \eqref{prnst} and the fact that the operators
$C_{\phi_k} \cdots C_{\phi_1}$, $k=1, \ldots, n$, are
densely defined.

(iii) It follows from (ii) and Proposition \ref{clos}
that $h:=\sum_{j=1}^{n-1} \hsf_{\phi_1\circ \cdots
\circ \phi_j} < \infty$ a.e.\ $[\mu]$. Set $Y=\{x\in X
\colon \hsf_{\phi_1\circ \cdots \circ \phi_n} (x) <
\infty\}$ and $\ascr_Y = \{\varDelta \in \ascr\colon
\varDelta \subseteq Y\}$. Equip $\dz{C_{\phi_1\circ
\cdots \circ \phi_n}}$ with the graph norm of
$C_{\phi_1\circ \cdots \circ \phi_n}$ and note that the
mapping
   \begin{align*}
\varTheta\colon \dz{C_{\phi_1\circ \cdots \circ
\phi_n}} \ni f \longmapsto f|_{Y} \in
L^2\big(Y,\ascr_Y,(1+\hsf_{\phi_1\circ \cdots \circ
\phi_n})\D \mu\big)
   \end{align*}
is a well-defined unitary isomorphism (use \eqref{2}).
It follows from Lemma \ref{l1.11} that
$L^2\big(Y,\ascr_Y,(1+ h + \hsf_{\phi_1\circ \cdots
\circ \phi_n})\D \mu\big)$ is dense in
$L^2\big(Y,\ascr_Y,(1+\hsf_{\phi_1\circ \cdots \circ
\phi_n})\D \mu\big)$. Since, by \eqref{prnst} and
\eqref{prodig}, $\varTheta(\dz{C_{\phi_{n}} \cdots
C_{\phi_1}})=L^2\big(Y,\ascr_Y,(1+ h +
\hsf_{\phi_1\circ \cdots \circ \phi_n})\D \mu\big)$,
we deduce that $\overline{C_{\phi_n} \cdots
C_{\phi_1}} = C_{\phi_1\circ \cdots \circ \phi_n}$.
Applying the previous argument to the systems
$(C_{\phi_1}, \ldots, C_{\phi_k})$, $k\in \{1, \ldots,
n-1\}$, we obtain \eqref{numerek}.

   (iv) It is sufficient to discuss the case of $n=2$.
Suppose that $C_{\phi_1\circ \phi_2}$ is densely
defined. In view of Proposition \ref{clos}, the
measure $\mu\circ (\phi_1 \circ \phi_2)^{-1}$ is
$\sigma$-finite. Since $\mu\circ (\phi_1 \circ
\phi_2)^{-1} = (\mu\circ \phi_2^{-1})\circ
\phi_1^{-1}$, we see that the measure $\mu\circ
\phi_2^{-1}$ is $\sigma$-finite as well. Applying
Proposition \ref{clos} again, we conclude that
$C_{\phi_2}$ is densely defined.

(v) Apply (ii) and (iv).
   \end{proof}
   \begin{cor}\label{n-1}
If $C_{\phi}^{n-1}$ is densely defined for some $n\in
\nbb$, then $\overline{C_{\phi}^n} = C_{\phi^n}$.
   \end{cor}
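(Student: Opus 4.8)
The plan is to derive this as a direct specialization of Proposition \ref{prod}(iii), with a separate treatment of the boundary case $n=1$.

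First I would dispose of $n=1$. Here $C_\phi^{n-1}=C_\phi^0=I$ is everywhere defined, so the hypothesis holds automatically, and the required conclusion $\overline{C_\phi}=C_{\phi}$ is immediate from Proposition \ref{clos}, which asserts that $C_\phi$ is closed.

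For $n\Ge 2$ I would apply Proposition \ref{prod} to the constant system $\phi_1=\cdots=\phi_n=\phi$. Under this choice the composite symbols collapse to iterates, $\phi_1\circ\cdots\circ\phi_k=\phi^k$, and the truncated products become powers, $C_{\phi_k}\cdots C_{\phi_1}=C_\phi^k$, for every $k=1,\ldots,n$. In particular the hypothesis of part (iii), namely that $C_{\phi_{n-1}}\cdots C_{\phi_1}$ be densely defined, reads exactly as the assumption that $C_\phi^{n-1}$ is densely defined. Proposition \ref{prod}(iii) then yields $C_{\phi^k}=\overline{C_\phi^k}$ for all $k=1,\ldots,n$, and reading this off at $k=n$ gives $\overline{C_\phi^n}=C_{\phi^n}$, as desired.

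I do not anticipate any genuine obstacle, since the statement is a corollary. The only points that require care are purely bookkeeping: verifying that the conventions $\phi^0=\mathrm{id}$ and $C_\phi^0=I$ make the $n=1$ hypothesis vacuous, so that this case must be covered by closedness alone rather than by Proposition \ref{prod}, whose hypothesis $2\Le n$ excludes it; and checking that the specialization of all symbols to a single $\phi$ correctly matches the composite-symbol notation used in the proposition.
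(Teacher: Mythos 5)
Your proposal is correct and is exactly the argument the paper intends: Corollary \ref{n-1} is stated without proof precisely because it is the specialization of Proposition \ref{prod}\,(iii) to $\phi_1=\cdots=\phi_n=\phi$, as you carry out. Your separate treatment of $n=1$ via the closedness of $C_\phi$ (Proposition \ref{clos}) is also the right way to cover the case excluded by the hypothesis $2\Le n$ of Proposition \ref{prod}.
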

The following is an immediate consequence of
\eqref{prodig} and Corollary \ref{l1.12}.
   \begin{pro} \label{indz}
If $\phi_1, \ldots, \phi_m$ and $\psi_1, \ldots,
\psi_n$ are nonsingular transformations of $X$, then
$\dz{C_{\phi_n} \cdots C_{\phi_1}} \subseteq
\dz{C_{\psi_m} \cdots C_{\psi_1}}$ if and only if
there exists $c \in \rbb_+$ such that $\sum_{j=1}^m
\hsf_{\psi_1\circ \cdots \circ \psi_j} \Le c \big(1+
\sum_{j=1}^n \hsf_{\phi_1\circ \cdots \circ
\phi_j}\big)$ a.e.\ $[\mu]$.
   \end{pro}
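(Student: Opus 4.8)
The plan is to read off both domains as weighted $L^2$-spaces directly from \eqref{prodig}, and then to feed them into Corollary \ref{l1.12}, which is precisely the device that turns an inclusion of such spaces into a pointwise comparison of their weights. First I would put
\[
\rho_1 := 1 + \sum_{j=1}^n \hsf_{\phi_1\circ \cdots \circ \phi_j} \quad\text{and}\quad \rho_2 := 1 + \sum_{j=1}^m \hsf_{\psi_1\circ \cdots \circ \psi_j},
\]
so that, by \eqref{prodig}, $\dz{C_{\phi_n} \cdots C_{\phi_1}} = L^2(\rho_1\D\mu)$ and $\dz{C_{\psi_m} \cdots C_{\psi_1}} = L^2(\rho_2\D\mu)$. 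The inclusion of domains in question is then nothing but the inclusion $L^2(\rho_1\D\mu) \subseteq L^2(\rho_2\D\mu)$ of weighted $L^2$-spaces.

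Next I would apply Corollary \ref{l1.12} to the pair of weights $\rho_1,\rho_2$: it yields that $L^2(\rho_1\D\mu) \subseteq L^2(\rho_2\D\mu)$ holds if and only if there is a constant $c' \in \rbb_+$ with $\rho_2 \Le c'\rho_1$ a.e.\ $[\mu]$. It remains only to match this condition with the inequality displayed in the statement, and this is purely a matter of absorbing an additive constant into a multiplicative one, which is legitimate because $\rho_1 \Ge 1$ everywhere. Indeed, from the displayed inequality $\sum_{j=1}^m \hsf_{\psi_1\circ \cdots \circ \psi_j} \Le c\,\rho_1$ one obtains $\rho_2 = 1 + \sum_{j=1}^m \hsf_{\psi_1\circ \cdots \circ \psi_j} \Le 1 + c\,\rho_1 \Le (1+c)\rho_1$; conversely, $\rho_2 \Le c'\rho_1$ gives at once $\sum_{j=1}^m \hsf_{\psi_1\circ \cdots \circ \psi_j} = \rho_2 - 1 \Le c'\rho_1$. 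Thus the two inequalities are equivalent (take $c' = 1+c$ in one direction and $c = c'$ in the other), and the proposition follows.

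There is essentially no serious obstacle here: all the analytic substance---that an inclusion of weighted $L^2$-spaces forces a pointwise domination of the weights by some constant multiple---is carried by Corollary \ref{l1.12}, and the remaining manipulation is elementary. The only point that warrants a moment's care is that the densities $\hsf_{\phi_1\circ \cdots \circ \phi_j}$ and $\hsf_{\psi_1\circ \cdots \circ \psi_j}$ are a priori $[0,\infty]$-valued, so one should make sure that Corollary \ref{l1.12} is applicable to weights allowed to attain the value $+\infty$; granting this, the whole argument is indeed immediate, as the authors indicate.
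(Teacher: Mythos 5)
Your proof is correct and is exactly the paper's argument: the authors also deduce the proposition immediately from \eqref{prodig} together with Corollary \ref{l1.12}, and your constant-absorption step (using $\rho_1 \Ge 1$) plus the observation that Corollary \ref{l1.12} explicitly allows $[0,\infty]$-valued weights with $0 < \rho_i \Le \infty$ fills in the routine details they leave unsaid.
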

Now we give necessary and sufficient conditions for a
product of composition operators to be closed.
   \begin{pro} \label{bjs}
Let $\phi_1, \ldots, \phi_n$ be nonsingular
transformations of $X$ $(2 \Le n < \infty)$. Then the
following three conditions are equivalent{\em :}
   \begin{enumerate}
   \item[(i)] $C_{\phi_n} \cdots
C_{\phi_1} = C_{\phi_1\circ \cdots \circ \phi_n}$,
   \item[(ii)] $\dz{C_{\phi_1\circ \cdots \circ
\phi_n}} \subseteq \dz{C_{\phi_n} \cdots C_{\phi_1}}$,
   \item[(iii)] there exists $c\in \rbb_+$ such that
$\sum_{j=1}^{n-1} \hsf_{\phi_1\circ \cdots \circ
\phi_j} \Le c (1+\hsf_{\phi_1\circ \cdots \circ
\phi_n})$ a.e.\ $[\mu]$.
   \end{enumerate}
Moreover, any of the conditions {\em (i)} to {\em
(iii)} implies that
   \begin{enumerate}
   \item[(iv)] $C_{\phi_n} \cdots
C_{\phi_1}$ is closed.
   \end{enumerate}
If $C_{\phi_{n-1}} \cdots C_{\phi_1}$ is densely
defined, then all the conditions {\em (i)} to {\em
(iv)} are equivalent.
   \end{pro}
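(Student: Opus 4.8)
The plan is to show that (i), (ii) and (iii) are mutually equivalent by a short chain, to read off (iv) for free, and to use the extra density hypothesis only for the final reverse implication that closes the loop back to (i).

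First I would dispose of the equivalence (i)$\Leftrightarrow$(ii). By \eqref{prnst} one always has the operator inclusion $C_{\phi_n} \cdots C_{\phi_1} \subseteq C_{\phi_1\circ \cdots \circ \phi_n}$; in particular $\dz{C_{\phi_n} \cdots C_{\phi_1}} \subseteq \dz{C_{\phi_1\circ \cdots \circ \phi_n}}$ and the two operators agree on the smaller domain. Condition (ii) supplies the reverse domain inclusion, so the domains coincide and the operators are equal, which is (i); the converse is trivial. Thus (i) and (ii) express the same fact.

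Next, (ii)$\Leftrightarrow$(iii). Here I would regard $C_{\phi_1\circ \cdots \circ \phi_n}$ as a one-factor product and $C_{\phi_n} \cdots C_{\phi_1}$ as an $n$-factor product, and invoke Proposition \ref{indz} with $\phi_1\circ\cdots\circ\phi_n$ playing the role of the inner transformation and $\phi_1,\dots,\phi_n$ the outer ones (taking care that the roles are matched to the indexing in Proposition \ref{indz}). This yields that (ii) holds if and only if there is $c\in\rbb_+$ with $\sum_{j=1}^n \hsf_{\phi_1\circ \cdots \circ \phi_j} \Le c\,(1 + \hsf_{\phi_1\circ \cdots \circ \phi_n})$ a.e.\ $[\mu]$. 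Splitting off the last summand via $\sum_{j=1}^n \hsf_{\phi_1\circ \cdots \circ \phi_j} = \sum_{j=1}^{n-1}\hsf_{\phi_1\circ \cdots \circ \phi_j} + \hsf_{\phi_1\circ \cdots \circ \phi_n}$ and using $\hsf_{\phi_1\circ \cdots \circ \phi_n}\Le 1+\hsf_{\phi_1\circ \cdots \circ \phi_n}$, one checks that this estimate is equivalent, after adjusting $c$, to the inequality in (iii).

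Finally I would treat the closedness and the last assertion. Since (i)--(iii) are equivalent, it suffices to deduce (iv) from (i): there $C_{\phi_n} \cdots C_{\phi_1} = C_{\phi_1\circ \cdots \circ \phi_n}$, and the latter is closed by Proposition \ref{clos}. For the concluding statement, assume that $C_{\phi_{n-1}} \cdots C_{\phi_1}$ is densely defined. Then Proposition \ref{prod}(iii) gives $C_{\phi_1\circ \cdots \circ \phi_n} = \overline{C_{\phi_n} \cdots C_{\phi_1}}$, so if (iv) holds the operator $C_{\phi_n} \cdots C_{\phi_1}$ coincides with its own closure, namely $C_{\phi_1\circ \cdots \circ \phi_n}$, which is exactly (i). The step to watch is precisely this last one: without the density hypothesis, closedness alone does not force the domains to agree, and it is the hypothesis on $C_{\phi_{n-1}} \cdots C_{\phi_1}$ that makes the closure identity of Proposition \ref{prod}(iii) available and thereby upgrades (iv) to the operator equality (i).
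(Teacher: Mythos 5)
Your proposal is correct and follows essentially the same route as the paper: (i)$\Leftrightarrow$(ii) via the inclusion \eqref{prnst}, (ii)$\Leftrightarrow$(iii) via Proposition \ref{indz} (with the same harmless adjustment of the constant to pass between the sums up to $n$ and up to $n-1$), (i)$\Rightarrow$(iv) via the closedness of $C_{\phi_1\circ\cdots\circ\phi_n}$ from Proposition \ref{clos}, and (iv)$\Rightarrow$(i) under the density hypothesis via the closure identity of Proposition \ref{prod}(iii). No gaps.
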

   \begin{proof}
The equivalence of (i) and (ii) is a direct
consequence of \eqref{prnst}. The equivalence of (ii)
and (iii) follows from Proposition \ref{indz}. That
(i) implies (iv) follows from Proposition \ref{clos}.
Finally, if the product $C_{\phi_{n-1}} \cdots
C_{\phi_1}$ is densely defined, then (iv) implies (i)
due to Proposition \ref{prod}\,(iii).
   \end{proof}
   \begin{cor}\label{pr1}
If $\phi$ is a nonsingular transformation of $X$, then
the following assertions hold for all $n \in \nbb${\em
:}
   \begin{enumerate}
   \item[(i)] $C_{\phi^n}$ is densely defined if and
only $\hsf_{\phi^n} < \infty$ a.e.\ $[\mu]$,
   \item[(ii)] $C_{\phi}^n$ is densely defined if and
only $\sum_{j=1}^n \hsf_{\phi^j} < \infty$ a.e.\
$[\mu]$,
   \item[(iii)]  $C_{\phi}^n=C_{\phi^n}$ if and only
if there exists $c\in \rbb_+$ such that $\hsf_{\phi^k}
\Le c (1 + \hsf_{\phi^n})$ a.e.\ $[\mu]$ for $k=1,
\ldots, n$.
   \end{enumerate}
   \end{cor}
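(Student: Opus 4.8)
The plan is to read off all three assertions as specializations of the product results of Section \ref{secprod} to the case $\phi_1 = \cdots = \phi_n = \phi$, for which $\phi_1 \circ \cdots \circ \phi_k = \phi^k$ and $C_{\phi_n} \cdots C_{\phi_1} = C_\phi^n$. Since $\phi^n$ is itself a nonsingular transformation of $X$, assertion (i) is immediate from the equivalence (i)$\Leftrightarrow$(ii) of Proposition \ref{clos} applied with $\phi^n$ in place of $\phi$.

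For (ii), I would first dispose of the trivial case $n=1$, where the claim coincides with (i). For $n \Ge 2$, Proposition \ref{prod}\,(ii) (with $\phi_j = \phi$) says that $C_\phi^n$ is densely defined if and only if $C_{\phi^k}$ is densely defined for every $k = 1, \ldots, n$. Combining this with part (i), the latter amounts to $\hsf_{\phi^k} < \infty$ a.e.\ $[\mu]$ for every $k = 1, \ldots, n$; since the functions $\hsf_{\phi^k}$ are nonnegative, a finite sum of them is finite a.e.\ exactly when each summand is, so this is the same as $\sum_{j=1}^n \hsf_{\phi^j} < \infty$ a.e.\ $[\mu]$.

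For (iii), the case $n=1$ is again trivial (both sides hold, with $c=1$ on the right). For $n \Ge 2$, I would invoke the equivalence (i)$\Leftrightarrow$(iii) of Proposition \ref{bjs} with $\phi_j = \phi$, which identifies $C_\phi^n = C_{\phi^n}$ with the existence of $c \in \rbb_+$ satisfying $\sum_{j=1}^{n-1} \hsf_{\phi^j} \Le c(1 + \hsf_{\phi^n})$ a.e.\ $[\mu]$. The only thing left is to see that this single bound on the partial sum is equivalent to the family of inequalities $\hsf_{\phi^k} \Le c(1 + \hsf_{\phi^n})$, $k = 1, \ldots, n$, quoted in the statement. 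One direction is clear because each $\hsf_{\phi^k}$ with $k \Le n-1$ is dominated by the sum, while the case $k=n$ holds automatically from $\hsf_{\phi^n} \Le 1 + \hsf_{\phi^n}$; the reverse follows by summing the $n-1$ inequalities and enlarging the constant by the factor $n-1$.

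Since every step is a direct reading of an already-proved statement, there is no genuine obstacle here; the only points demanding a little care are the boundary case $n=1$ (Propositions \ref{prod} and \ref{bjs} being stated only for $n \Ge 2$) and the elementary reconciliation, in (iii), of the single partial-sum bound of Proposition \ref{bjs} with the term-by-term bounds recorded in the corollary.
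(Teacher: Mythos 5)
Your proposal is correct and follows essentially the same route as the paper, whose proof of Corollary \ref{pr1} is precisely the instruction to combine Propositions \ref{clos}, \ref{prod}\,(ii) and \ref{bjs} in the case $\phi_1=\cdots=\phi_n=\phi$. The details you supply — the $n=1$ boundary cases and the equivalence between the partial-sum bound of Proposition \ref{bjs}\,(iii) and the term-by-term bounds in the corollary — are exactly the routine verifications the paper leaves to the reader.
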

   \begin{proof}
Use Propositions \ref{clos}, \ref{prod}\,(ii) and
\ref{bjs} (for (ii) see also \cite[p.\ 515]{jab}).
   \end{proof}
   \begin{cor} \label{xn2x}
If $\phi$ is a nonsingular transformation of $X$ and
$\overline{\dz{C_\phi^m}}=L^2(\mu)$ for some $m \in
\nbb$, then there exists a sequence
$\{X_n\}_{n=1}^\infty \subseteq \ascr$ such that
   \begin{enumerate}
   \item[(i)] $X_n\nearrow X$ as $n\to \infty$,
   \item[(ii)] $\mu(X_n) < \infty$ for all $n\in
\nbb$,
   \item[(iii)] $\sum_{j=1}^m \hsf_{\phi^j}(x)
\Le n$ for $\mu$-a.e.\ $x \in X_n$ and $n\in \nbb$.
   \end{enumerate}
   \end{cor}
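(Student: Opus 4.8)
The plan is to build the sets $X_n$ by intersecting a $\sigma$-finite exhaustion of $X$ with the sublevel sets of the density $g:=\sum_{j=1}^m \hsf_{\phi^j}$; the crux is that $g$ is finite a.e.\ precisely because $C_\phi^m$ is densely defined.

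First I would note that the hypothesis $\overline{\dz{C_\phi^m}}=L^2(\mu)$ means exactly that $C_\phi^m$ is densely defined, so Corollary \ref{pr1}\,(ii) yields $g<\infty$ a.e.\ $[\mu]$. Fixing an $\ascr$-measurable representative of $g$ with values in $[0,\infty]$, I set $N:=\{x\in X\colon g(x)=\infty\}$, so that $N\in\ascr$ and $\mu(N)=0$. Since $\mu$ is $\sigma$-finite, I may choose $\{Y_k\}_{k=1}^\infty\subseteq\ascr$ with $Y_k\nearrow X$ and $\mu(Y_k)<\infty$ for all $k$, and then define
\[
X_n:=\bigl(Y_n\cap\{x\in X\colon g(x)\Le n\}\bigr)\cup N,\qquad n\in\nbb.
\]

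The verification is then routine. Monotonicity $X_n\subseteq X_{n+1}$ is immediate from $Y_n\subseteq Y_{n+1}$ together with $\{g\Le n\}\subseteq\{g\Le n+1\}$. To complete (i) I check that $\bigcup_n X_n=X$: every point of $N$ lies in each $X_n$, and if $x\notin N$ then $g(x)<\infty$, whence $x\in Y_n\cap\{g\Le n\}\subseteq X_n$ for every $n$ large enough that $x\in Y_n$ and $g(x)\Le n$. Since $\mu(N)=0$, we have $\mu(X_n)\Le\mu(Y_n)<\infty$, which is (ii). Finally $X_n\setminus N\subseteq\{g\Le n\}$, so $\sum_{j=1}^m\hsf_{\phi^j}(x)=g(x)\Le n$ for $\mu$-a.e.\ $x\in X_n$, establishing (iii).

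I do not anticipate a genuine obstacle here. The only point requiring a little care is obtaining the exact equality $\bigcup_n X_n=X$ demanded by the notation $X_n\nearrow X$, rather than equality merely up to a null set; adjoining the null set $N$ to every $X_n$ secures this while leaving the finiteness in (ii) and the a.e.\ inequality in (iii) untouched.
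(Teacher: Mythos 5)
Your proof is correct and is precisely the argument the paper intends (the corollary is stated without proof, as an immediate consequence of Corollary \ref{pr1}\,(ii) together with the $\sigma$-finiteness of $\mu$): one deduces $\sum_{j=1}^m \hsf_{\phi^j}<\infty$ a.e.\ $[\mu]$ and intersects a finite-measure exhaustion of $X$ with the sublevel sets of this density. Your extra care in adjoining the null set $\{g=\infty\}$ to obtain the exact set-theoretic equality $\bigcup_n X_n = X$ is a sound way to handle the only delicate point.
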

The question of when $C^\infty$-vectors of an operator
$A$ in a Hilbert space $\hh$ form a dense subspace of
$\hh$ is of independent interest (cf.\
\cite{Sch0,Ko-Th}). If every power of $A$ is densely
defined, then one could expect that $\dzn{A}$ is dense
in $\hh$. This is the case for any closed densely
defined operator (even in a Banach space), the
resolvent set of which is nonempty\footnote{\;This can
be deduced from the fact that the intersection of
ranges of all powers of a bounded operator which has
dense range is dense in the underlying space.}. As
shown below, this is also the case for composition
operators. However, this seems to be not true in
general. Dropping the assumption of closedness, we can
provide a simple counterexample. Indeed, take an
infinite dimensional separable Hilbert space $\hh$.
Then there exists a dense subset $\{e_n\colon n
\in\zbb_+\}$ of $\hh$ which consists of linearly
independent vectors. Let $A$ be the operator in $\hh$
whose domain is the linear span of $\{e_n\colon n
\in\nbb\}$ and $Ae_j = e_{j-1}$ for every $j\in \nbb$.
Since $\{e_n\colon n \Ge k\}$ is dense in $\hh$ for
every $k \in \zbb_+$, we deduce that the operator
$A^n$ is densely defined for every $n \in \zbb_+$.
However, $\dzn{A}=\{0\}$.
   \begin{thm} \label{Mittag}
If $\phi$ is a nonsingular transformation of $X$, then
the following conditions are equivalent\/{\em :}
   \begin{enumerate}
   \item[(i)] $\dz{C_\phi^n}$ is dense in $L^2(\mu)$ for every $n
\in \nbb$,
   \item[(ii)] $\dzn{C_\phi}$ is dense in $L^2(\mu)$,
   \item[(iii)] $\dzn{C_\phi}$  is a core for $C_\phi^n$
for every $n\in \zbb_+$,
   \item[(iv)] $\dzn{C_\phi}$
is dense in $(\dz{C_\phi^n},\|\cdot\|_{C_\phi,n})$ for
every $n\in \zbb_+$.
   \end{enumerate}
   \end{thm}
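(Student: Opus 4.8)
The plan is to read all four conditions as statements about the single decreasing chain of spaces $\ee_n := (\dz{C_\phi^n}, \|\cdot\|_{C_\phi,n})$ and then feed this chain into the Mittag-Leffler theorem (Theorem \ref{Mit-Lef}). By \eqref{3} each $\ee_n$ is the $L^2$-space $L^2(w_n \D\mu)$, where $w_n := \sum_{j=0}^n \hsf_{\phi^j}$; hence every $\ee_n$ is complete, $\ee_{n+1}$ is a vector subspace of $\ee_n$, and the inclusion $\ee_{n+1} \hookrightarrow \ee_n$ is contractive because $w_n \Le w_{n+1}$, i.e.\ $\|\cdot\|_{C_\phi,n} \Le \|\cdot\|_{C_\phi,n+1}$. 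Observe also that $\dzn{C_\phi} = \bigcap_{n=0}^\infty \ee_n$ and that $\ee_0 = L^2(\mu)$.

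First I would dispose of the easy implications, arranged as a cycle. The implication (ii)$\Rightarrow$(i) is immediate from $\dzn{C_\phi} \subseteq \dz{C_\phi^n}$. For (iv)$\Rightarrow$(iii), note that on $\dz{C_\phi^n}$ one has $\|f\|_{C_\phi^n}^2 = \|f\|^2 + \|C_\phi^n f\|^2 \Le \sum_{j=0}^n \|C_\phi^j f\|^2 = \|f\|_{C_\phi,n}^2$, so the graph norm of $C_\phi^n$ is dominated by $\|\cdot\|_{C_\phi,n}$; consequently density in the stronger norm $\|\cdot\|_{C_\phi,n}$ forces density in the graph norm, which is exactly the assertion that $\dzn{C_\phi}$ is a core for $C_\phi^n$. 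For (iii)$\Rightarrow$(ii), I would simply take $n=0$: since $C_\phi^0 = I$, its graph norm is equivalent to $\|\cdot\|$, so being a core for $I$ means being dense in $L^2(\mu)$.

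The substantive implication is (i)$\Rightarrow$(iv), and this is where Mittag-Leffler enters. To apply Theorem \ref{Mit-Lef} to $\{\ee_n\}$, the one remaining hypothesis is that $\ee_{n+1}$ is dense in $\ee_n$. Assuming (i), every power $C_\phi^{n+1}$ is densely defined, so Corollary \ref{pr1}(ii) yields $w_{n+1} < \infty$ a.e.\ $[\mu]$. Fixing $f \in \ee_n = L^2(w_n\D\mu)$ and putting $X_k = \{w_{n+1} \Le k\}$, the truncations $\chi_{X_k} f$ lie in $\ee_{n+1}$, because $\int |\chi_{X_k}f|^2 w_{n+1}\D\mu \Le k \int |f|^2 w_n\D\mu < \infty$ (using $w_n \Ge \hsf_{\phi^0} = 1$), while $\|f - \chi_{X_k}f\|_{C_\phi,n}^2 = \int_{X\setminus X_k} |f|^2 w_n\D\mu \to 0$ by dominated convergence, since $X\setminus X_k \downarrow \{w_{n+1} = \infty\}$, a $\mu$-null set. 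This is precisely the truncation already carried out in the proof of Proposition \ref{clos}. With the density hypotheses established, Theorem \ref{Mit-Lef} gives that $\bigcap_{n=0}^\infty \ee_n = \dzn{C_\phi}$ is dense in every $\ee_k$, which is (iv) (and, at $k=0$, also reproves (ii)).

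The main obstacle is this last density step, namely confirming that the a.e.\ finiteness $w_{n+1} < \infty$ extracted from (i) through Corollary \ref{pr1} genuinely delivers density of $\ee_{n+1}$ in $\ee_n$, so that the Mittag-Leffler machinery becomes applicable; everything else is bookkeeping about the ordering of the norms $\|\cdot\|_{C_\phi,n}$ and $\|\cdot\|_{C_\phi^n}$.
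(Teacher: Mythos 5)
Your proposal is correct and follows essentially the same route as the paper: both identify $(\dz{C_\phi^n},\|\cdot\|_{C_\phi,n})$ with the Hilbert space $L^2\big(\big(\sum_{j=0}^n \hsf_{\phi^j}\big)\D\mu\big)$ via \eqref{3}, check that each consecutive embedding is continuous with dense range, and then apply the Mittag-Leffler theorem (Theorem \ref{Mit-Lef}) to obtain (i)$\Rightarrow$(iv), the remaining implications being routine. The only cosmetic difference is that you establish the density of $\ee_{n+1}$ in $\ee_n$ by an explicit truncation argument, whereas the paper simply invokes Lemma \ref{l1.11}.
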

   \begin{proof}
The implications (iv)$\Rightarrow$(iii),
(iii)$\Rightarrow$(ii) and (ii)$\Rightarrow$(i) are
obvious.

(i)$\Rightarrow$(iv) In view of Corollary
\ref{pr1}\,(ii), $0 \Le \hsf_{\phi^n} < \infty$ a.e.\
$[\mu]$ for all $n\in \nbb$. Given $n \in \zbb_+$ we
denote by $\hh_n$ the inner product space
$(\dz{C_\phi^n}, \|\cdot\|_{C_\phi,n})$. It follows
from \eqref{3} that $\hh_n$ is a Hilbert space which
coincides with $L^2((\sum_{j=0}^n
\hsf_{\phi^j})\D\mu)$. Hence, in view of Lemma
\ref{l1.11}, $\hh_{n+1}$ is a dense subspace of
$\hh_{n}$. Clearly, the embedding map of $\hh_{n+1}$
into $\hh_n$ is continuous. Applying Theorem
\ref{Mit-Lef} to the sequence $\{\hh_n\}_{n=0}^\infty$,
we conclude that $\dzn{C_\phi} = \bigcap_{i=0}^\infty
\hh_i$ is dense in $\dz{C_\phi^n}$ with respect to the
norm $\|\cdot\|_{C_\phi,n}$ for every $n\in \zbb_+$.
This completes the proof.
   \end{proof}
Regarding Theorem \ref{Mittag}, we mention the
following surprising fact which can be deduced from
\cite[Theorem 4.5]{Sch0} by using Theorem
\ref{Mit-Lef} and \cite[Corollaries 1.2 and
1.4]{Sch0}.
   \begin{thm}
Let $A$ be an unbounded selfadjoint operator in a
complex Hilbert space $\hh$ and let $\mathfrak N$ be a
$($possibly empty$)$ subset of $\nbb\setminus \{1\}$
such that $\nbb \setminus \mathfrak N$ is infinite.
Then there exists a closed symmetric operator $T$ in
$\hh$ such that $T \subseteq A$, $\dzn{T}$ is dense in
$\hh$ and for every $k\in\nbb$, $\dzn{T}$ is a core
for $T^k$ if and only if $k \in \nbb \setminus
\mathfrak N$.
   \end{thm}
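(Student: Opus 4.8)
The plan is to deduce the statement from the construction in \cite[Theorem 4.5]{Sch0}, reading off the core properties of the powers of the resulting operator by means of \cite[Corollaries 1.2 and 1.4]{Sch0} together with the Mittag-Leffler theorem (Theorem \ref{Mit-Lef}). Throughout I would work with the graph-norm Hilbert spaces $\hh_n = (\dz{T^n}, \|\cdot\|_{T,n})$, $n \in \zbb_+$, attached to a closed symmetric operator $T$; here $\hh_{n+1}$ is a continuously embedded (not necessarily dense) subspace of $\hh_n$ and $\bigcap_{n=0}^\infty \hh_n = \dzn{T}$, so that ``$\dzn{T}$ is a core for $T^k$'' means precisely that $\dzn{T}$ is dense in $\hh_k$. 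The first thing I would record is the elementary reduction that makes the density assertion automatic: since $\mathfrak N \subseteq \nbb \setminus \{1\}$ we have $1 \in \nbb \setminus \mathfrak N$, so the operator we build must have $\dzn{T}$ a core for $T = T^1$; as $\dz{T}$ is dense in $\hh$ and $\|\cdot\| \Le \|\cdot\|_{T,1}$, density of $\dzn{T}$ in $\hh_1$ forces density of $\dzn{T}$ in $\hh$. Hence the entire content of the theorem is the exact matching of the set of powers for which $\dzn{T}$ is a core with $\nbb \setminus \mathfrak N$.

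Next I would apply \cite[Theorem 4.5]{Sch0} to the given unbounded selfadjoint operator $A$ in order to produce a closed symmetric restriction $T \subseteq A$ whose chain $\{\hh_n\}_{n=0}^\infty$ realizes a prescribed pattern of density for the consecutive embeddings $\hh_{n+1} \hookrightarrow \hh_n$, the pattern being chosen to encode $\mathfrak N$; the standing hypotheses that $\mathfrak N \subseteq \nbb \setminus \{1\}$ and that $\nbb \setminus \mathfrak N$ is infinite are exactly the combinatorial restrictions under which such a construction is available. I would then convert this density information into statements about $\dzn{T}$ using \cite[Corollaries 1.2 and 1.4]{Sch0}. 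For an index $k \in \nbb \setminus \mathfrak N$ the embeddings relevant to level $k$ are dense, so Theorem \ref{Mit-Lef} applied to the corresponding subchain of $\{\hh_n\}$ yields that $\dzn{T} = \bigcap_{n} \hh_n$ is dense in $\hh_k$, i.e.\ $\dzn{T}$ is a core for $T^k$; this simultaneously covers the case $k=1$ and hence, by the reduction above, gives density of $\dzn{T}$ in $\hh$. For an index $k \in \mathfrak N$ the same corollaries localize the obstruction, namely a consecutive embedding at level $k$ that fails to be dense, and this is what prevents $\dzn{T}$ from being dense in $\hh_k$; thus $\dzn{T}$ is not a core for $T^k$. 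Collecting the two cases gives the required equivalence.

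The hard part is concentrated entirely in \cite[Theorem 4.5]{Sch0}: to arrange, inside a single closed symmetric restriction of $A$, that the consecutive embeddings of the graph-norm chain fail to be dense at exactly the prescribed levels while keeping $\dzn{T}$ a core for every complementary power. The delicacy is that core-ness of $\dzn{T}$ at level $k$ is neither monotone in $k$ nor a mere single-step or tail condition on the chain $\{\hh_n\}$: the Mittag-Leffler theorem only turns density of a whole range of consecutive embeddings into the density of their intersection, and Corollaries 1.2 and 1.4 are needed precisely to pin the loss of density to one prescribed level without disturbing the neighbouring ones. Once the cited construction is in hand, the remaining work is the routine translation indicated above, so I expect no further obstacle beyond invoking \cite[Theorem 4.5]{Sch0} correctly.
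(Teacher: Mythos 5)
Your proposal follows exactly the route the paper itself indicates: the paper offers no detailed proof of this theorem, only the remark that it ``can be deduced from \cite[Theorem 4.5]{Sch0} by using Theorem \ref{Mit-Lef} and \cite[Corollaries 1.2 and 1.4]{Sch0}'', which is precisely your plan of invoking Schm\"udgen's construction for the closed symmetric restriction $T \subseteq A$ and then translating the density pattern of the graph-norm chain into core statements via the Mittag-Leffler theorem and the cited corollaries. Your additional observations (the reduction showing density of $\dzn{T}$ in $\hh$ follows from the case $k=1$, and the role of the hypotheses on $\mathfrak N$) are consistent elaborations of that same citation-based argument, so the two approaches coincide.
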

   \section{Examples} \label{exampl}
We begin by showing that Corollary \ref{n-1} is no
longer true if the assumption that $C_{\phi}^{n-1}$ is
densely defined is dropped.
   \begin{exa} \label{numerek-c}
We will demonstrate that there is a nonsingular
transformation $\phi$ such that $C_{\phi}$ is densely
defined, $C_{\phi^j}$ and $C_{\phi}^j$ are not densely
defined for every $j \in \{2,3, \ldots\}$, and
$\overline{C_{\phi}^3} \varsubsetneq C_{\phi^3}$
(however, by Corollary \ref{n-1},
$\overline{C_{\phi}^2} = C_{\phi^2}$). For this, we
will re-examine Example 4.2 given in \cite{jab}.
Suppose that $\{a_i\}_{i=0}^\infty$,
$\{b_i\}_{i=0}^\infty$ and
$\{c_{i,j}\}_{i,j=0}^\infty$ are disjoint sets of
distinct elements. Set $X=\{a_i\}_{i=0}^\infty \cup
\{b_i\}_{i=0}^\infty \cup \{c_{i,j}\}_{i,j=0}^\infty$
and $\ascr=2^X$. Let $\mu$ be a unique $\sigma$-finite
measure on $\ascr$ determined by
   \begin{align*}
\mu\big(\{x\}\big) =
   \begin{cases}
   1 & \text{if } x=a_i \hspace{1.1ex}\text{ for some
   } i\in \zbb_+,
   \\
   \frac{1}{2^{i+1}} & \text{if } x=b_i \hspace{1.3ex}
   \text{ for some } i\in \zbb_+,
   \\
   \frac{1}{2^{j+1}} & \text{if } x=c_{i,j} \text{ for
   some } i,j\in \zbb_+.
   \end{cases}
   \end{align*}
Define a nonsingular transformation $\phi$ of $X$ by
   \begin{align*}
\phi(x)=
   \begin{cases}
   a_{i+1} & \text{if } x=a_i \hspace{1.1ex} \text{
   for some } i\in \zbb_+,
   \\
   a_0 & \text{if } x=b_i \hspace{1.3ex} \text{ for
   some } i\in \zbb_+,
   \\
   b_i & \text{if } x=c_{i,j} \text{ for some } i,j\in
   \zbb_+.
   \end{cases}
   \end{align*}
Then $\hsf_\phi < \infty$ a.e.\ $[\mu]$, and thus by
Proposition \ref{clos} the operator $C_\phi$ is
densely defined. Since $\hsf_{\phi^2}(a_0)=\infty$, we
infer from Proposition \ref{clos} that $C_{\phi^2}$ is
not densely defined. It follows from \eqref{prodig}
that $\dz{C_{\phi}^3} = L^2((1 + \hsf_\phi +
\hsf_{\phi^2} + \hsf_{\phi^3})\D\mu)$. This and
$\hsf_{\phi^2}(a_0)=\infty$ imply that $f(a_0)=0$ for
every $f \in \dz{C_{\phi}^3}$. Since the convergence
in the graph norm is stronger than the pointwise
convergence, we deduce that $f(a_0)=0$ for every $f
\in \dz{\overline{C_{\phi}^3}}$. As $\dz{C_{\phi^3}} =
L^2((1 + \hsf_{\phi^3})\D\mu)$ (cf.\ \eqref{2}) and
$\hsf_{\phi^3}(a_0)=0$ (because
$\phi^{-3}(\{a_0\})=\varnothing$), we see that
$\chi_{\{a_0\}} \in \dz{C_{\phi^3}} \setminus
\dz{\overline{C_{\phi}^3}}$. Finally, arguing as above
and using the fact that
$\hsf_{\phi^{j+2}}(a_j)=\infty$ for every $j\in
\zbb_+$, we conclude that $C_{\phi^j}$ is not densely
defined for every $j\in \{2,3, \ldots\}$. As a
consequence, $C_{\phi}^j$ is not densely defined for
every $j \in \{2,3, \ldots\}$.
   \end{exa}
The composition operator $C_\phi$ constructed in
Example \ref{numerek-c} is densely defined, its square
is not densely defined, however $\dim \dz{C_\phi^n}=
\infty$ for all $n\in \nbb$ (because $\chi_{\{a_{i}\}}
\in \dz{C_\phi^n}$ for all $i\Ge n-1$). In fact, there
are more pathological examples.
   \begin{exa}
It was proved in \cite[Theorem 4.2]{j-j-s3} that there
exists a hyponormal weighted shift $S$ on a rootless
and leafless directed tree with positive weights whose
square has trivial domain. By \cite[Lemma
4.3.1]{j-j-s0}, $S$ is unitarily equivalent to a
composition operator $C$. As a consequence, $C$ is
injective and hyponormal, and $\dz{C^2} =
\dzn{C}=\{0\}$ (see also \cite{budz} for a recent
construction).
   \end{exa}
Regarding Proposition \ref{prod}, we note that it may
happen that the operators $C_{\phi_1}$ and
$C_{\phi_2}$ are densely defined, while the operators
$C_{\phi_1\circ \phi_2}$ and $C_{\phi_2}C_{\phi_1}$
are not (even if $\phi_1=\phi_2$, see Example
\ref{numerek-c}). Below we will show that for some
$\phi_1$ and $\phi_2$ the composition operator
$C_{\phi_1\circ \phi_2}$ is densely defined (even
bounded), while $C_{\phi_1}$ is not.
   \begin{exa}
Set $X=\zbb_+$ and $\ascr=2^X$. Let $\mu$ be the
counting measure on $X$ and let $\phi_1$ and $\phi_2$
be the nonsingular transformations of $X$ given by
$\phi_1(2n)=n$, $\phi_1(2n+1)=0$ and $\phi_2(n)=2n$
for $n\in \zbb_+$. Then $\phi_1\circ\phi_2$ is the
identity transformation of $X$, and hence
$C_{\phi_1\circ \phi_2}$ is the identity operator on
$L^2(\mu)$. However, since $\mu(\phi_1^{-1}(\{0\})) =
\infty$, the measure $\mu\circ \phi_1^{-1}$ is not
$\sigma$-finite, and thus by Proposition \ref{clos}
the operator $C_{\phi_1}$ is not densely defined.
   \end{exa}
   Our next aim is to provide examples showing that
the equality $C_{\phi}^n=C_{\phi^n}$ which appears in
Corollary \ref{pr1}\,(iii) does not hold in general
even if $\dzn{C_\phi}$ is dense in $L^2(\mu)$ (which
is not the case for the operator given in Example
\ref{numerek-c}).
   \begin{exa}
Set $X=\nbb$ and $\ascr=2^X$. Let $\mu$ be a counting
measure on $X$ and let $\{J_n\}_{n=1}^\infty$ be a
partition of $X$. Define a nonsingular transformation
$\phi$ of $X$ by $\phi(x) = \min J_{n^2}$ for $x \in
J_n$ and $n\in \nbb$. Set $\nbbs = \{n^2\colon n \in
\nbb\}$ and note that
   \begin{align} \label{eqrel}
X=\{1\} \sqcup \bigsqcup_{q\in \nbb\setminus \nbbs}
\big\{q^{2^n}\colon n \in \zbb_+\big\},
   \end{align}
where all terms in \eqref{eqrel} are pairwise disjoint
(they are equivalence classes under the equivalence
relation $\sim$ given by:\ $p \sim q$ if and only if
$p^{2^m} = q^{2^n}$ for some $m,n\in \zbb_+$). Since
$\hsf_{\phi^j}(x) = \card{\phi^{-j}(\{x\})}$ for $x
\in X$ and $j\in \nbb$, we infer from \eqref{eqrel}
that for all $j\in \nbb$ and $x\in X$ ($m$ appearing
below varies over the set of integers)
   \begin{align} \label{invj}
\hsf_{\phi^j}(x) =
   \begin{cases}
   \card{J_1} & \text{ if } x=\min J_1,
   \\
   \card{J_{q^{2^{m-j}}}} & \text{ if } x=\min
J_{q^{2^{m}}} \text{ with } q \in\nbb \setminus \nbbs
\text{ and } m\Ge j,
   \\
   0 & \text{ otherwise}.
   \end{cases}
   \end{align}
By \eqref{eqrel}, \eqref{invj}, Proposition \ref{clos}
and Theorem \ref{Mittag}, the following are
equivalent:
   \begin{itemize}
   \item $\card{J_k} < \aleph_0$ for every $k\in
\nbb$,
   \item $C_\phi$ is densely defined,
   \item $C_\phi^n$ is densely defined for some $n\in
\nbb$,
   \item $C_\phi^n$ is densely defined for every $n\in
\nbb$,
   \item $\dzn{C_\phi}$ is dense in
$L^2(\mu)$.
   \end{itemize}
It follows from \eqref{invj} and Proposition \ref{bjs}
that for a given integer $n\Ge 2$, $C_{\phi}^n$ is
closed if and only if there exists $c\in \rbb_+$ such
that
   \begin{align*}
\card{J_{q^{2^s}}} & \Le c, \quad s=0, \ldots, n-2,\,
q \in \nbb \setminus \nbbs,
   \\
\card{J_{q^{2^{s+1}}}} & \Le c \big(1+
\card{J_{q^{2^{s}}}}\big), \quad s\in \zbb_+,\, q \in
\nbb \setminus \nbbs.
   \end{align*}
Using this and an induction argument, one can prove
that either $C_\phi^n$ is closed for every integer
$n\Ge 1$, or $C_\phi^n$ is not closed for every
integer $n\Ge 2$. Summarizing, if we choose a
partition $\{J_i\}_{i=1}^\infty$ of $X$ such that
$J_n$ is finite for every $n\in \nbb$, and $\sup\{
\card{J_q} \colon q\in \nbb\setminus \nbbs\} =
\aleph_0$ (which is possible), then $\dzn{C_\phi}$ is
dense in $L^2(\mu)$ and $C_\phi^n$ is not closed for
every integer $n\Ge 2$. On the other hand, if
$\kappa\Ge 2$ is any fixed integer and a partition
$\{J_i\}_{i=1}^\infty$ of $X$ is selected so that
$J_1$ is finite and $\card{J_{q^{2^{n}}}}=\kappa^n$
for all $n\in\zbb_+$ and $q \in \nbb\setminus \nbbs$
(which is also possible), then $\dzn{C_\phi}$ is dense
in $L^2(\mu)$ and $C_\phi^n$ is closed and unbounded
for every $n\in \nbb$.
   \end{exa}
   \section{\label{injs}Injectivity of $C_\phi$}
In this section we provide necessary and sufficient
conditions for a composition operator to be injective.
The following set plays an important role in our
considerations.
   \begin{align*}
\mathsf{N}_\phi=\{x \in X \colon \hsf_\phi(x)=0\}.
   \end{align*}
The following description of the kernel of $C_\phi$
follows immediately from \eqref{base}.
   \begin{pro}\label{nullspace}
If $\phi\colon X \to X$ is nonsingular, then
$\jd{C_\phi} = \chi_{\nsf_\phi} L^2(\mu)$.
   \end{pro}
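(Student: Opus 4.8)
The plan is to read off both inclusions from the transport formula \eqref{base} applied to a single identity, with no auxiliary machinery needed. Recall that $\jd{C_\phi}$ consists of those $f \in \dz{C_\phi}$ with $C_\phi f = f \circ \phi = 0$ a.e.\ $[\mu]$, and that by \eqref{base},
   \begin{align*}
\|C_\phi f\|^2 = \int_X |f \circ \phi|^2 \D\mu = \int_X |f|^2 \hsf_\phi \D\mu
   \end{align*}
for every $\ascr$-measurable $f \colon X \to \cbb$. I would base the whole argument on this equality, which simultaneously controls the value of $C_\phi f$ and the membership of $f$ in $\dz{C_\phi}$.

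First I would prove $\jd{C_\phi} \subseteq \chi_{\nsf_\phi} L^2(\mu)$. If $f \in \jd{C_\phi}$, then $\int_X |f|^2 \hsf_\phi \D\mu = \|C_\phi f\|^2 = 0$; since $\hsf_\phi \Ge 0$, the integrand $|f|^2 \hsf_\phi$ vanishes a.e.\ $[\mu]$, which forces $f = 0$ a.e.\ $[\mu]$ on the set $\{x \colon \hsf_\phi(x) > 0\} = X \setminus \nsf_\phi$. Hence $f = \chi_{\nsf_\phi} f$ a.e.\ $[\mu]$, that is, $f \in \chi_{\nsf_\phi} L^2(\mu)$.

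For the reverse inclusion I would take $f \in \chi_{\nsf_\phi} L^2(\mu)$, so that $f = 0$ a.e.\ $[\mu]$ off $\nsf_\phi$. Then $|f|^2 \hsf_\phi = 0$ a.e.\ $[\mu]$ (on $\nsf_\phi$ because $\hsf_\phi = 0$ there, and off $\nsf_\phi$ because $f = 0$ there), so \eqref{base} gives $\int_X |f \circ \phi|^2 \D\mu = \int_X |f|^2 \hsf_\phi \D\mu = 0$. This shows $f \circ \phi \in L^2(\mu)$, whence $f \in \dz{C_\phi}$, and moreover $C_\phi f = 0$, so $f \in \jd{C_\phi}$.

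I do not expect any genuine obstacle here, since the result is an immediate consequence of \eqref{base}. The only point requiring a little care is the bookkeeping between the two a.e.\ conditions ``$f = 0$ on $\{\hsf_\phi > 0\}$'' and ``$f \in \chi_{\nsf_\phi} L^2(\mu)$'', together with the verification of domain membership in the reverse direction; the latter is automatic once the relevant integral is seen to vanish, and both are handled uniformly by the single displayed identity above.
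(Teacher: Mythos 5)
Your proof is correct and is exactly the argument the paper has in mind: the paper states that the proposition ``follows immediately from \eqref{base}'', and your two inclusions are precisely the spelled-out version of that one-line justification. No gaps; the domain-membership check in the reverse inclusion is handled properly via the vanishing of $\int_X |f\circ\phi|^2\,\D\mu$.
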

   \begin{pro}\label{kercf}
Let $\phi$ be a nonsingular transformation of $X$.
Consider the following four conditions{\em :}
   \begin{enumerate}
   \item[(i)] $\jd{C_\phi} = \{0\}$,
   \item[(ii)] $\mu(\nsf_\phi)=0$,
   \item[(iii)] $\chi_{\nsf_\phi} \circ \phi =
\chi_{\nsf_\phi}$ a.e.\ $[\mu]$,
   \item[(iv)] $\jd{C_\phi} \subseteq \jd{C_\phi^*}$.
   \end{enumerate}
Then the conditions {\em (i)}, {\em (ii)} and {\em
(iii)} are equivalent. Moreover, if $C_\phi$ is
densely defined, then the conditions {\em (i)} to {\em
(iv)} are equivalent.
   \end{pro}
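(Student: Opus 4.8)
The plan is to prove the chain (i) $\Leftrightarrow$ (ii) $\Leftrightarrow$ (iii) with no density hypothesis, and then, under the assumption that $C_\phi$ is densely defined (so that $C_\phi^*$ is available), to attach (iv) to these by showing (i) $\Rightarrow$ (iv) trivially and (iv) $\Rightarrow$ (ii) by contraposition.

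For (i) $\Leftrightarrow$ (ii) I would invoke Proposition \ref{nullspace}, which gives $\jd{C_\phi} = \chi_{\nsf_\phi}L^2(\mu)$. Since $\mu$ is $\sigma$-finite, this subspace is trivial exactly when $\mu(\nsf_\phi)=0$: if $\mu(\nsf_\phi)>0$ one selects $\varDelta \subseteq \nsf_\phi$ with $0<\mu(\varDelta)<\infty$, and then $\chi_\varDelta$ is a nonzero member of $\jd{C_\phi}$; conversely, if $\mu(\nsf_\phi)=0$ every function supported on $\nsf_\phi$ is the zero element of $L^2(\mu)$. The passage to (iii) hinges on the identity $\chi_{\nsf_\phi}\circ\phi = 0$ a.e.\ $[\mu]$, valid with no extra assumption: by \eqref{1.5}, $\mu(\phi^{-1}(\nsf_\phi)) = \int_{\nsf_\phi}\hsf_\phi\,\D\mu = 0$ since $\hsf_\phi$ vanishes on $\nsf_\phi$ by definition, while $\chi_{\nsf_\phi}\circ\phi = \chi_{\phi^{-1}(\nsf_\phi)}$. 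Hence (iii), namely $\chi_{\nsf_\phi}\circ\phi = \chi_{\nsf_\phi}$ a.e.\ $[\mu]$, reduces to $\chi_{\nsf_\phi}=0$ a.e.\ $[\mu]$, i.e.\ to (ii). This settles the three-fold equivalence.

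Now assume $C_\phi$ is densely defined. Then (i) $\Rightarrow$ (iv) is immediate, since $\jd{C_\phi}=\{0\}$. For (iv) $\Rightarrow$ (ii) I argue contrapositively: supposing $\mu(\nsf_\phi)>0$, I aim to produce $f\in\jd{C_\phi}$ with $f\notin\jd{C_\phi^*}$. As $\jd{C_\phi^*}=\ob{C_\phi}^\perp$ for densely defined $C_\phi$, it suffices to find $g\in\dz{C_\phi}$ with $\is{f}{C_\phi g}\neq 0$. Take $\varDelta\subseteq\nsf_\phi$ with $0<\mu(\varDelta)<\infty$ and put $f=\chi_\varDelta\in\jd{C_\phi}$. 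For $g\in\dz{C_\phi}$ one has $\is{f}{C_\phi g}=\int_\varDelta\overline{g\circ\phi}\,\D\mu$, so the task is to make this integral nonzero by a suitable choice of $g$.

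The main obstacle is precisely this existence step, and it is where dense definedness is used decisively. Choosing $g=\chi_E$ gives $\is{f}{C_\phi g}=\mu(\varDelta\cap\phi^{-1}(E))$, so I need a set $E$ with $\chi_E\in\dz{C_\phi}$ and $\mu(\varDelta\cap\phi^{-1}(E))>0$. Since $\overline{\dz{C_\phi}}=L^2(\mu)$, Corollary \ref{xn2x} (applied with $m=1$) supplies $X_n\nearrow X$ with $\mu(X_n)<\infty$ and $\hsf_\phi\Le n$ a.e.\ on $X_n$; then $\mu(\phi^{-1}(X_n))=\int_{X_n}\hsf_\phi\,\D\mu\Le n\,\mu(X_n)<\infty$, so $\chi_{X_n}\in\dz{C_\phi}$ by \eqref{2}. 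Because $\varDelta\cap\phi^{-1}(X_n)\nearrow\varDelta$, continuity from below yields $\mu(\varDelta\cap\phi^{-1}(X_n))\to\mu(\varDelta)>0$, whence $\is{f}{C_\phi\chi_{X_n}}=\mu(\varDelta\cap\phi^{-1}(X_n))>0$ for all large $n$. Thus $f\notin\jd{C_\phi^*}$, so (iv) fails, completing the contrapositive and therefore the equivalence of (i)--(iv).
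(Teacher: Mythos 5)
Your proof is correct and follows essentially the same route as the paper's: the three-way equivalence rests on Proposition \ref{nullspace}, $\sigma$-finiteness, and the identity $\mu(\phi^{-1}(\nsf_\phi))=\int_{\nsf_\phi}\hsf_\phi\,\D\mu=0$, while the implication (iv)$\Rightarrow$(ii) is obtained by pairing kernel vectors against $C_\phi\chi_{X_n}$ with $\{X_n\}_{n=1}^\infty$ taken from Corollary \ref{xn2x} and invoking continuity of measure from below. The only differences are cosmetic: you run (iv)$\Rightarrow$(ii) contrapositively with a fixed finite-measure subset $\varDelta\subseteq\nsf_\phi$ rather than the paper's truncations $\nsf_\phi\cap X_n$, and you get (ii)$\Leftrightarrow$(iii) in one stroke from the unconditional fact $\chi_{\nsf_\phi}\circ\phi=0$ a.e.\ $[\mu]$, where the paper argues the two directions separately (via nonsingularity and the measure transport theorem, respectively).
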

   \begin{proof}
(i)$\Leftrightarrow$(ii) Apply Proposition
\ref{nullspace} and the $\sigma$-finiteness of $\mu$.

(ii)$\Rightarrow$(iii) Since $\phi$ is nonsingular, we
have $\mu(\nsf_\phi)=0$ and
$\mu(\phi^{-1}(\nsf_\phi))=0$, which implies that
$\mu(\nsf_\phi \vartriangle \phi^{-1}(\nsf_\phi))=0$.
The latter is equivalent to (iii).

(iii)$\Rightarrow$(ii) By the measure transport
theorem, we have
   \begin{align*}
\mu(\nsf_\phi) = \int_X \chi_{\nsf_\phi} \D \mu =
\int_X \chi_{\nsf_\phi} \circ \phi \D \mu = \int_X
\chi_{\nsf_\phi} \hsf_\phi \D \mu =0.
   \end{align*}

Now suppose that $C_\phi$ is densely defined.

(i)$\Rightarrow$(iv) Obvious.

(iv)$\Rightarrow$(ii) Let $\{X_n\}_{n=1}^\infty$ be as
in Corollary \ref{xn2x} (with $m=1$). Then, by
 \eqref{base}, we see that $\chi_{X_n},
\chi_{\nsf_\phi \cap X_n} \in \dz{C_\phi}$ and
$\|C_\phi (\chi_{\nsf_\phi \cap X_n})\|^2 =
\int_{\nsf_\phi \cap X_n} \hsf_\phi \D\mu = 0$ for all
$n \in \nbb$, which together with our assumption that
$\jd{C_\phi} \subseteq \jd{C_\phi^*}$ yields
   \begin{align*}
0 = \is{\chi_{\nsf_\phi \cap X_n}}{C_\phi \chi_{X_n}}
= \int_{\nsf_\phi \cap X_n}\chi_{X_n} \circ \phi \D\mu
= \mu(\nsf_\phi \cap X_n \cap \phi^{-1}(X_n))
   \end{align*}
for all $n \in \nbb$. Since $\nsf_\phi \cap X_n \cap
\phi^{-1}(X_n) \nearrow \nsf_\phi$ as $n\to \infty$,
the continuity of measure implies that
$\mu(\nsf_\phi)=0$. This completes the proof.
   \end{proof}
   \begin{cor} \label{hipnul}
If $C_\phi$ is hyponormal, then $\jd{C_\phi} = \{0\}$.
   \end{cor}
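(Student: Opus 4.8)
The plan is to reduce everything to Proposition \ref{kercf}, whose ``moreover'' part already tells us that for a densely defined $C_\phi$ the inclusion $\jd{C_\phi} \subseteq \jd{C_\phi^*}$ forces $\jd{C_\phi} = \{0\}$. First I would record that hyponormality of $C_\phi$ entails, by the very definition adopted in the Preliminaries, that $C_\phi$ is densely defined, so the hypotheses of the ``moreover'' part of Proposition \ref{kercf} are met. Thus it suffices to verify condition (iv) there, namely $\jd{C_\phi} \subseteq \jd{C_\phi^*}$.

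To establish that inclusion I would use the general and essentially trivial fact that every hyponormal operator $A$ satisfies $\jd{A} \subseteq \jd{A^*}$. Concretely, take $f \in \jd{C_\phi}$; since the kernel of any operator is contained in its domain, $f \in \dz{C_\phi}$, and $C_\phi f = 0$ gives $\|C_\phi f\| = 0$. The hyponormality inequality $\|C_\phi^* f\| \Le \|C_\phi f\|$ then yields $\|C_\phi^* f\| = 0$, so $f \in \jd{C_\phi^*}$. Hence $\jd{C_\phi} \subseteq \jd{C_\phi^*}$.

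Finally, invoking the equivalence of (i) and (iv) in Proposition \ref{kercf} (valid because $C_\phi$ is densely defined), I would conclude that $\jd{C_\phi} = \{0\}$, which is the assertion. There is no real obstacle here: the argument is a one-line consequence of the hyponormality inequality applied to kernel vectors, combined with the already-proved characterization of injectivity in Proposition \ref{kercf}. The only point worth stating explicitly is that hyponormality is taken to include dense definedness, so that the ``moreover'' clause of Proposition \ref{kercf} is applicable; everything else is immediate.
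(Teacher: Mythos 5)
Your argument is exactly the paper's: hyponormality (which by the definition in the Preliminaries presupposes dense definedness) gives $\jd{C_\phi} \subseteq \jd{C_\phi^*}$ via the inequality $\|C_\phi^* f\| \Le \|C_\phi f\|$ on kernel vectors, and then the equivalence of (i) and (iv) in Proposition \ref{kercf} yields $\jd{C_\phi} = \{0\}$. The proposal is correct and matches the paper's proof step for step.
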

   \begin{proof}
It follows from the definition of hyponormality that
$\jd{C_\phi} \subseteq \jd{C_\phi^*}$. This and
Proposition \ref{kercf} complete the proof.
   \end{proof}
   \begin{cor} \label{fnnul}
If $C_\phi$ is formally normal, then
   \begin{align*}
\dz{C_\phi} \cap \jd{C_\phi^*} = \{0\}.
   \end{align*}
   \end{cor}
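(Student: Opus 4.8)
The plan is to reduce everything to the kernel result already established in Corollary \ref{hipnul} together with the defining norm equality of formal normality. The key observation is that a formally normal operator is in particular hyponormal: the text in Section 2 records that formal normality implies hyponormality, since the condition $\dz{C_\phi}\subseteq\dz{C_\phi^*}$ with $\|C_\phi f\|=\|C_\phi^* f\|$ trivially yields $\|C_\phi^* f\|\Le\|C_\phi f\|$. Hence Corollary \ref{hipnul} applies and gives $\jd{C_\phi}=\{0\}$ at the outset.

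With that in hand, I would argue directly. Take any $f\in\dz{C_\phi}\cap\jd{C_\phi^*}$. Membership in $\jd{C_\phi^*}$ means $C_\phi^* f=0$, so $\|C_\phi^* f\|=0$. Because $f\in\dz{C_\phi}$ and $C_\phi$ is formally normal, the defining equality $\|C_\phi f\|=\|C_\phi^* f\|$ is available and forces $\|C_\phi f\|=0$, i.e.\ $f\in\jd{C_\phi}$. Combining this with $\jd{C_\phi}=\{0\}$ obtained above yields $f=0$, which is exactly the assertion $\dz{C_\phi}\cap\jd{C_\phi^*}=\{0\}$.

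There is no serious obstacle here: the whole argument is the short chain $C_\phi^* f=0\Rightarrow\|C_\phi^* f\|=0\Rightarrow\|C_\phi f\|=0\Rightarrow f\in\jd{C_\phi}=\{0\}$. The only point one must be careful about is that the norm equality of formal normality is legitimately usable, which it is precisely because $f$ was assumed to lie in $\dz{C_\phi}$; this is why the intersection with $\dz{C_\phi}$ appears in the statement. The two inputs that carry the proof are therefore the implication \emph{formally normal $\Rightarrow$ hyponormal} (feeding Corollary \ref{hipnul}) and the transfer of vanishing from $C_\phi^* f$ to $C_\phi f$ via the formal-normality identity.
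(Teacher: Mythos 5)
Your proof is correct and is essentially identical to the paper's: both use the formal-normality identity $\|C_\phi f\|=\|C_\phi^* f\|$ to transfer the vanishing of $C_\phi^* f$ to $C_\phi f$, and then invoke Corollary \ref{hipnul} (via formal normality $\Rightarrow$ hyponormality) to conclude $\jd{C_\phi}=\{0\}$. No issues.
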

   \begin{proof}
Indeed, if $f \in \dz{C_\phi} \cap \jd{C_\phi^*}$,
then $\|C_\phi f\|= \|C_\phi^* f\|=0$, which means
that $f \in \jd{C_\phi}$. Applying Corollary
\ref{hipnul}, completes the proof.
   \end{proof}
It turns out that composition of $\hsf_\phi$ with
$\phi$ is positive a.e.\ $[\mu]$ (see also the proof
of \cite[Corollary 5]{ha-wh}).
   \begin{pro} \label{hff}
If $\phi\colon X \to X$ is nonsingular, then
$\hsf_\phi \circ \phi > 0$ a.e.\ $[\mu]$.
   \end{pro}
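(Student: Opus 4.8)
The plan is to reduce the statement to the assertion that a single explicit set is $\mu$-null, to identify that set as $\phi^{-1}(\nsf_\phi)$, and then to compute its measure directly from the Radon--Nikodym relation \eqref{1.5}.

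First I would record the pointwise identity
\begin{align*}
\{x \in X \colon (\hsf_\phi \circ \phi)(x) = 0\} = \phi^{-1}(\nsf_\phi),
\end{align*}
which holds because $(\hsf_\phi \circ \phi)(x) = 0$ precisely when $\phi(x) \in \nsf_\phi$. Note that $\nsf_\phi = \hsf_\phi^{-1}(\{0\})$ belongs to $\ascr$ since $\hsf_\phi$ is $\ascr$-measurable, and hence $\phi^{-1}(\nsf_\phi) \in \ascr$ by measurability of $\phi$. Thus the desired conclusion $\hsf_\phi \circ \phi > 0$ a.e.\ $[\mu]$ is equivalent to $\mu(\phi^{-1}(\nsf_\phi)) = 0$.

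Next I would apply \eqref{1.5} with $\varDelta = \nsf_\phi$ to obtain
\begin{align*}
\mu(\phi^{-1}(\nsf_\phi)) = (\mu \circ \phi^{-1})(\nsf_\phi) = \int_{\nsf_\phi} \hsf_\phi \D\mu.
\end{align*}
Since $\hsf_\phi$ vanishes identically on $\nsf_\phi$ by the very definition of $\nsf_\phi$, the integral on the right is zero, whence $\mu(\phi^{-1}(\nsf_\phi)) = 0$. This proves the claim. There is no genuine obstacle: the only substantive point is the set-theoretic identity above, after which the result is the tautology that the Radon--Nikodym derivative integrates to zero over its own zero set. Equivalently, one may run the same computation through the measure transport formula \eqref{base} applied to $f = \chi_{\nsf_\phi}$, since $\int_X |\chi_{\nsf_\phi} \circ \phi|^2 \D\mu = \int_X \chi_{\nsf_\phi} \hsf_\phi \D\mu = 0$ while the left-hand side equals $\mu(\phi^{-1}(\nsf_\phi))$.
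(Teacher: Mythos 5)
Your proof is correct and is essentially identical to the paper's: the paper also reduces the claim to showing $\mu(\phi^{-1}(\nsf_\phi))=0$ and computes $\mu(\phi^{-1}(\nsf_\phi)) = \int_X \chi_{\nsf_\phi}\circ\phi \,\D\mu = \int_X \chi_{\nsf_\phi}\hsf_\phi \,\D\mu = 0$, which is exactly the variant via \eqref{base} that you mention at the end. Your primary route through \eqref{1.5} is the same computation in different notation.
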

   \begin{proof}
Note that $\mu(\phi^{-1}(\nsf_\phi)) = \int_X
\chi_{\nsf_\phi} \circ \phi \D \mu = \int_X
\chi_{\nsf_\phi} \hsf_\phi \D \mu = 0$. This combined
with $\phi^{-1}(\nsf_\phi) = \{x \in X \colon
\hsf_\phi (\phi (x))=0\}$ completes the proof.
   \end{proof}
   \begin{cor}    \label{hfifi}
If $\phi$ is a nonsingular transformation of $X$ and
$\hsf_\phi \circ \phi = \hsf_\phi$ a.e.\ $[\mu]$, then
$\jd{C_\phi} = \{0\}$.
   \end{cor}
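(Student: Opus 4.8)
The plan is to reduce the claim to showing that $\nsf_\phi$ is $\mu$-null and then to invoke the positivity result already at hand. The key ingredient is Proposition \ref{hff}, which guarantees that $\hsf_\phi \circ \phi > 0$ a.e. $[\mu]$. First I would record this positivity: for $\mu$-almost every $x$ one has $\hsf_\phi(\phi(x)) > 0$. Feeding in the standing hypothesis $\hsf_\phi \circ \phi = \hsf_\phi$ a.e. $[\mu]$, the two almost-everywhere statements combine—since the intersection of two conull sets is conull—to yield $\hsf_\phi > 0$ a.e. $[\mu]$. In other words, $\mu(\nsf_\phi) = \mu(\{x \in X \colon \hsf_\phi(x) = 0\}) = 0$.

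Once $\mu(\nsf_\phi) = 0$ is in place, the conclusion $\jd{C_\phi} = \{0\}$ follows at once from the equivalence of conditions (i) and (ii) in Proposition \ref{kercf}, which holds with no density hypothesis imposed on $C_\phi$. I do not anticipate any genuine obstacle: the whole argument is a short chain linking Proposition \ref{hff}, the assumption $\hsf_\phi \circ \phi = \hsf_\phi$, and Proposition \ref{kercf}. The only point deserving a moment of care is the bookkeeping of the almost-everywhere qualifiers, namely ensuring that both the equality $\hsf_\phi \circ \phi = \hsf_\phi$ and the strict inequality $\hsf_\phi \circ \phi > 0$ are read relative to $\mu$, so that they may legitimately be intersected to force $\hsf_\phi > 0$ a.e. $[\mu]$.
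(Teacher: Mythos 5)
Your argument is correct and is essentially the paper's own proof: both combine Proposition \ref{hff} with the hypothesis $\hsf_\phi\circ\phi=\hsf_\phi$ a.e.\ $[\mu]$ to conclude $\hsf_\phi>0$ a.e.\ $[\mu]$, i.e.\ $\mu(\nsf_\phi)=0$, and then identify the kernel. The only cosmetic difference is that you route the last step through Proposition \ref{kercf}\,(i)$\Leftrightarrow$(ii) rather than citing Proposition \ref{nullspace} directly, which amounts to the same thing.
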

   \begin{proof}
Apply Propositions \ref{nullspace} and \ref{hff}.
   \end{proof}
   \section{\label{ppolar}The polar decomposition}
Given an $\ascr$-measurable function $u\colon X \to
\cbb$, we denote by $M_u$ the operator of
multiplication by $u$ in $L^2(\mu)$ defined by
   \begin{align*}
\dz{M_u} & =\{f \in L^2(\mu)\colon u \cdot f \in
L^2(\mu)\},
   \\
M_u f & = u \cdot f, \quad f \in \dz{M_u}.
   \end{align*}
The operator $M_u$ is a normal operator (cf.\
\cite[Section 7.2]{b-s}.

The polar decomposition of $C_{\phi}$ can be
explicitly described as follows.
   \begin{pro}\label{polar}
Suppose that the composition operator $C_{\phi}$
is densely defined and $C_{\phi}=U|C_{\phi}|$ is
its polar decomposition. Then
   \begin{enumerate}
   \item[(i)] $|C_\phi|=M_{\hsf_\phi^{1/2}}$,
   \item[(ii)] the initial space of $U$ is given
by\,\footnote{\;Note that the mapping $L^2(\hsf_\phi
\D\mu) \ni f \mapsto \hsf_\phi^{1/2} f \in L^2(\mu)$ is
an isometry.}
   \begin{align} \label{initial}
\overline{\ob{|C_{\phi}|}} = \big\{\hsf_\phi^{1/2} f
\colon f \in L^2(\hsf_\phi \D\mu)\big\},
   \end{align}
   \item[(iii)] the final space of $U$ is given by
   \begin{align} \label{final}
\overline{\ob{C_{\phi}}} = \big\{f\circ \phi\colon f
\in L^2(\hsf_\phi \D\mu)\big\},
   \end{align}
   \item[(iv)]  the partial isometry $U$ is given
by\,\footnote{\;Recall that $\hsf_\phi \circ \phi
> 0$ a.e.\ $[\mu]$ (cf.\ Proposition \ref{hff}).}
   \begin{align} \label{upo}
Ug = \frac{g \circ \phi}{(\hsf_\phi \circ
\phi)^{1/2}}, \quad g\in L^2(\mu),
   \end{align}
   \item[(v)]  the adjoint $U^*$ of $U$ is given by
   \begin{align*}
U^* g = \hsf_\phi^{1/2} \cdot V^{-1}Pg, \quad g \in
L^2(\mu),
   \end{align*}
where $V\colon L^2(\hsf_\phi\D\mu) \to
\overline{\ob{C_{\phi}}}$ is a unitary operator
defined by $Vf = f\circ \phi$ for $f\in
L^2(\hsf_\phi\D\mu)$ and $P$ is the orthogonal
projection of $L^2(\mu)$ onto
$\overline{\ob{C_{\phi}}}$.
   \end{enumerate}
   \end{pro}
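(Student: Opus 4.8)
The plan is to exhibit explicit candidates for the two factors and then invoke the uniqueness of the polar decomposition of the closed densely defined operator $C_\phi$ (it is closed by Proposition \ref{clos} and densely defined by hypothesis; for uniqueness see \cite[Section 8.1]{b-s}). Concretely, I would take $M_{\hsf_\phi^{1/2}}$ as the candidate for $|C_\phi|$ and the operator $U$ given by \eqref{upo} as the candidate for the partial isometry, and verify the three defining relations: that $M_{\hsf_\phi^{1/2}}$ is nonnegative selfadjoint, that $U$ is a partial isometry with $\jd{U}=\jd{M_{\hsf_\phi^{1/2}}}$, and that $C_\phi = U M_{\hsf_\phi^{1/2}}$.

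First I would record that $M_{\hsf_\phi^{1/2}}$ is nonnegative and selfadjoint (it is $M_u$ with $u=\hsf_\phi^{1/2}\Ge 0$), that by \eqref{2} its domain $\{f\in L^2(\mu)\colon \hsf_\phi^{1/2} f \in L^2(\mu)\}$ coincides with $\dz{C_\phi}$, and that $\jd{M_{\hsf_\phi^{1/2}}} = \chi_{\nsf_\phi} L^2(\mu) = \jd{C_\phi}$ by Proposition \ref{nullspace}. Next, using Proposition \ref{hff} (so the denominator in \eqref{upo} is positive a.e.\ $[\mu]$ and $U$ is well defined), I would compute, via the transport formula \eqref{base} applied to $f = \hsf_\phi^{-1/2} g$ on $X\setminus\nsf_\phi$, that
   \begin{align*}
\|Ug\|^2 = \int_X \frac{|g\circ\phi|^2}{\hsf_\phi\circ\phi}\D\mu = \int_{X\setminus\nsf_\phi} |g|^2 \D\mu, \quad g \in L^2(\mu).
   \end{align*}
This shows at once that $U$ is a partial isometry with initial space $\chi_{X\setminus\nsf_\phi} L^2(\mu)$ and $\jd{U} = \chi_{\nsf_\phi}L^2(\mu) = \jd{C_\phi}$. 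Finally, for $f\in\dz{C_\phi}=\dz{M_{\hsf_\phi^{1/2}}}$ a direct substitution gives $U(M_{\hsf_\phi^{1/2}} f) = U(\hsf_\phi^{1/2} f) = f\circ\phi = C_\phi f$, so that $C_\phi = U M_{\hsf_\phi^{1/2}}$. By uniqueness of the polar decomposition this yields (i) and (iv) simultaneously.

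With (i) and (iv) in hand, the remaining assertions follow formally. For (ii), the initial space equals $\jd{C_\phi}^\perp = \chi_{X\setminus\nsf_\phi}L^2(\mu)$, and the footnoted isometry $f\mapsto \hsf_\phi^{1/2} f$ maps $L^2(\hsf_\phi\D\mu)$ onto exactly this subspace, which is \eqref{initial}. For (iii) and (v) I would introduce the two isometries $W\colon L^2(\hsf_\phi\D\mu)\ni f\mapsto \hsf_\phi^{1/2} f\in L^2(\mu)$ (unitary onto the initial space, by (ii)) and $V\colon L^2(\hsf_\phi\D\mu)\ni f \mapsto f\circ\phi\in L^2(\mu)$ (isometric by \eqref{base}, hence with closed range), and observe that on the initial space $U = V W^{-1}$, since $U(\hsf_\phi^{1/2} f) = f\circ\phi = Vf$ for every $f\in L^2(\hsf_\phi\D\mu)$. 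Consequently the final space $\overline{\ob{C_\phi}} = \ob{U} = \ob{V} = \{f\circ\phi\colon f\in L^2(\hsf_\phi\D\mu)\}$, which is \eqref{final}; and taking the adjoint of the partial isometry gives $U^* = W V^{-1} P = \hsf_\phi^{1/2}\cdot V^{-1} P$ with $P$ the orthogonal projection onto the final space, which is (v).

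I expect the only genuinely delicate point to be the verification that $U$ in \eqref{upo} is a well-defined partial isometry with the correct kernel: this is where Proposition \ref{hff} (positivity of $\hsf_\phi\circ\phi$, and hence $\mu$-nullity of $\phi^{-1}(\nsf_\phi)$) and the transport formula \eqref{base} are indispensable, and where one must keep track of the exceptional sets $\nsf_\phi$ and $\phi^{-1}(\nsf_\phi)$. Everything else is bookkeeping with weighted $L^2$-spaces, and the clean factorization $U = VW^{-1}$ lets me read off (iii) and (v) without a separate density argument.
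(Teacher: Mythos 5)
Your proof is correct, but it organizes the core of the argument differently from the paper. The paper proves (i) by computing the adjoint product: it shows $C_\phi^*C_\phi \subseteq M_{\hsf_\phi}$ by testing against characteristic functions of measurable subsets of the sets $X_n$ supplied by Corollary \ref{xn2x}, and concludes equality because a selfadjoint operator admits no proper selfadjoint extension; (iv) is then proved separately, by checking that the operator defined by \eqref{upo} is a contraction which is isometric on $\overline{\ob{|C_\phi|}}$, has kernel $\jd{|C_\phi|}$, and composes with $|C_\phi|$ to give $C_\phi$, so that it must coincide with $U$. You never touch $C_\phi^*C_\phi$: you exhibit the explicit factorization $C_\phi = U M_{\hsf_\phi^{1/2}}$, verify that $M_{\hsf_\phi^{1/2}}$ is nonnegative selfadjoint with $\dz{M_{\hsf_\phi^{1/2}}} = \dz{C_\phi}$ and that $U$ is a partial isometry with $\jd{U} = \jd{M_{\hsf_\phi^{1/2}}} = \jd{C_\phi}$ (your key identity is exactly the paper's \eqref{klucz}), and then invoke uniqueness of the polar decomposition to obtain (i) and (iv) in one stroke. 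This is a genuinely different and somewhat more economical route; the one caution is that it needs the stronger form of the uniqueness theorem, in which \emph{both} factors are unique subject to $\jd{U}=\jd{T}$, whereas the paper's preliminaries only record uniqueness of the partial isometry once the positive factor is known to be $(A^*A)^{1/2}$. The stronger statement is standard, and in any case follows from the weaker one: your factorization gives $M_{\hsf_\phi^{1/2}}^2 \subseteq C_\phi^*C_\phi$, and equality follows by the same selfadjoint-maximality principle the paper uses, so this is a presentational point rather than a gap. A side benefit of the paper's route is that it produces $C_\phi^*C_\phi = M_{\hsf_\phi}$ explicitly, which is re-used later (e.g.\ in Lemma \ref{afn}); your route recovers this as the corollary $C_\phi^*C_\phi = |C_\phi|^2 = M_{\hsf_\phi}$. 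Your treatment of (ii), (iii) and (v) --- the unitary $W$ onto the initial space, the isometry $V$, the identity $U = VW^{-1}$ on the initial space, and $U^* = WV^{-1}P$ --- is essentially the paper's own argument in slightly different notation.
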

   \begin{proof}
(i) We will show that $C_{\phi}^* C_{\phi} \subseteq
M_{\hsf_\phi}$. Let $\{X_n\}_{n=1}^\infty$ be as in
Corollary \ref{xn2x} (with $m=1$). Take $f \in
\dz{C_{\phi}^* C_{\phi}}$ and fix $n\in \nbb$. By
\eqref{2}, $\chi_{\varDelta} \in \dz{C_{\phi}}$
whenever $\varDelta \in \ascr$ and $\varDelta
\subseteq X_n$. Thus, for every such $\varDelta$, we
have
   \begin{align*}
\int_{\varDelta} C_{\phi}^* C_{\phi}f \D\mu =
\is{C_{\phi}^* C_{\phi}f}{\chi_{\varDelta}} =
\is{C_{\phi}f}{C_{\phi}\chi_{\varDelta}}
\overset{\eqref{1.5}}= \int_{\varDelta} f \hsf_\phi \D
\mu.
   \end{align*}
Since both functions $(C_{\phi}^*
C_{\phi}f)\chi_{X_n}$ and $(f \hsf_\phi) \chi_{X_n}$
are in $L^1(\mu)$, we deduce that $C_{\phi}^*
C_{\phi}f = f \hsf_\phi$ a.e.\ $[\mu]$ on $X_n$. This
and $X_n\nearrow X$ give $C_{\phi}^* C_{\phi}f = f
\hsf_\phi$ a.e.\ $[\mu]$. As a consequence, we have
$C_{\phi}^* C_{\phi} \subseteq M_{\hsf_\phi}$. Since
both are selfadjoint operators, they are equal. Thus
$|C_{\phi}| = M_{\hsf_\phi}^{1/2} =
M_{\hsf_\phi^{1/2}}$.

(ii) By \cite[Section 8.1]{b-s} and Proposition
\ref{nullspace}, we have
   \begin{align} \label{abc}
\overline{\ob{|C_{\phi}|}} = \jd{|C_{\phi}|}^\perp
=\jd{C_{\phi}}^\perp = \chi_{X \setminus \nsf_{\phi}}
L^2(\mu),
   \end{align}
which as easily seen gives \eqref{initial}.

(iii) By \eqref{base} and (ii), the mapping $W\colon
\overline{\ob{|C_{\phi}|}} \to L^2(\mu)$ given by
   \begin{align}  \label{60knu}
W(\hsf_\phi^{1/2} f) = f \circ \phi, \quad f \in
L^2(\hsf_\phi \D\mu).
   \end{align}
is a well-defined isometry. Using (i) we verify that
$W|_{\ob{|C_{\phi}|}} = U|_{\ob{|C_{\phi}|}}$, which
implies that $\overline{\ob{C_{\phi}}} =
\ob{U}=\ob{W}$. Hence (iii) holds and, by
\eqref{60knu}, we have
   \begin{align} \label{numer}
U^*(f\circ \phi) = \hsf_\phi^{1/2} f, \quad f \in
L^2(\hsf_\phi \D\mu).
   \end{align}

(iv) Applying the measure transport theorem to the
restriction of $\phi$ to the full $\mu$-measure set on
which $\hsf_\phi \circ \phi$ is positive (cf.\
Proposition \ref{hff}), we get
   \begin{align} \label{klucz}
\int_X \frac{|g\circ \phi|^2}{\hsf_\phi \circ \phi} \D
\mu = \int_{X \setminus \nsf_\phi} |g|^2 \D\mu, \quad
g \in L^2(\mu).
   \end{align}
This and Proposition \ref{nullspace} imply that the
mapping $\tilde U \colon L^2(\mu) \ni g \mapsto \frac{g
\circ \phi}{(\hsf_\phi \circ \phi)^{1/2}} \in L^2(\mu)$
is a contraction such that $\jd{\tilde
U}=\chi_{\nsf_\phi}L^2(\mu) = \jd{|C_\phi|}$. Hence, by
\eqref{abc} and \eqref{klucz}, $\tilde U$ is an
isometry on $\overline{\ob{|C_\phi|}}$. Clearly, by
(i), $\tilde U|C_\phi|g = C_\phi g$ for $g \in
\dz{C_\phi}=\dz{|C_\phi|}$, which implies that
$U=\tilde U$.

(v) By \eqref{base} and \eqref{final}, $V$ is a
well-defined unitary operator. If $g \in L^2(\mu)$,
then by (iii), $Pg=f\circ \phi$ a.e.\ $[\mu]$ for some
$f \in L^2(\hsf_\phi\D \mu)$. Thus, by
$\jd{U^*}=\ob{I-P}$ and \eqref{numer}, we have
   \begin{align*}
U^*g = U^*P g = U^*(f\circ \phi) = \hsf_\phi^{1/2} f =
\hsf_\phi^{1/2} \cdot V^{-1} Pg.
   \end{align*}
This completes the proof.
   \end{proof}
Regarding Proposition \ref{polar}, we note that the
formulas for $|C_\phi|$ and $\overline{\ob{C_\phi}}$
are well-known in the case of bounded composition
operators (cf.\ \cite[Lemma 1]{ha-wh}). The formula
\eqref{upo} has appeared in \cite[p.\ 387]{bu-ju-la}
in the context of bounded operators without proof.
   \begin{cor}\label{cos}
Suppose that $C_{\phi}$ is densely defined and $g \in
L^2(\mu)$. Then $g$ belongs to
$\overline{\ob{C_{\phi}}}$ if and only if one of the
following equivalent conditions holds\footnote{\;See
Appendix for definitions and notation.}{\em :}
   \begin{enumerate}
   \item[(i)] there is  an $\ascr$-measurable
function $f\colon X \to \cbb$ such that $g=f\circ
\phi$ a.e.\ $[\mu]$,
   \item[(ii)] there is  a $\phi^{-1}(\ascr)$-measurable
function $f\colon X \to \cbb$ such that $g=f$
a.e.\ ~ $[\mu]$,
   \item[(iii)] $g$ is
$(\phi^{-1}(\ascr))^\mu$-measurable,
   \item[(iv)] for every Borel set $\varDelta$ in $\cbb$
there exists $\varDelta^\prime \in \ascr$ such
that
   \begin{align*}
\mu\big(g^{-1}(\varDelta) \vartriangle
\phi^{-1}(\varDelta^\prime)\big)=0.
   \end{align*}
   \end{enumerate}
In particular, $\overline{\ob{C_{\phi}}} =
L^2(\mu|_{(\phi^{-1}(\ascr))^\mu})$.
   \end{cor}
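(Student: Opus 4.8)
The plan is to build the whole statement on the explicit description of the final space $\overline{\ob{C_\phi}}$ recorded in Proposition \ref{polar}(iii), and then to recast the single analytic membership condition furnished by \eqref{final} into the four measure-theoretic forms (i)--(iv) using the standard facts on completed sub-$\sigma$-algebras gathered in Appendix \ref{apms}.

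I would first prove that $g\in\overline{\ob{C_\phi}}$ is equivalent to (i). The forward implication is immediate from \eqref{final}: if $g\in\overline{\ob{C_\phi}}$, then $g=f\circ\phi$ a.e.\ $[\mu]$ for some $f\in L^2(\hsf_\phi\D\mu)$, and such an $f$ is in particular $\ascr$-measurable. For the converse, suppose (i) holds with $f$ merely $\ascr$-measurable. The only point to check is that $f$ is square-integrable against $\hsf_\phi\D\mu$, and this is automatic from the measure transport identity \eqref{base}:
\[
\int_X|f|^2\hsf_\phi\D\mu=\int_X|f\circ\phi|^2\D\mu=\|g\|^2<\infty.
\]
Hence $f\in L^2(\hsf_\phi\D\mu)$, and \eqref{final} yields $g\in\overline{\ob{C_\phi}}$.

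Next I would dispatch the purely measure-theoretic chain (i)$\Leftrightarrow$(ii)$\Leftrightarrow$(iii)$\Leftrightarrow$(iv). For (i)$\Leftrightarrow$(ii) I would invoke the factorization (Doob--Dynkin) lemma: a function is $\phi^{-1}(\ascr)$-measurable precisely when it has the form $f\circ\phi$ with $f$ being $\ascr$-measurable, so that $g$ agreeing a.e.\ with such an $f\circ\phi$ is the same as $g$ agreeing a.e.\ with a $\phi^{-1}(\ascr)$-measurable function. For (ii)$\Leftrightarrow$(iii) I would use the basic completion fact that a function is $(\phi^{-1}(\ascr))^\mu$-measurable if and only if it coincides a.e.\ $[\mu]$ with a $\phi^{-1}(\ascr)$-measurable one, where $\sigma$-finiteness of $\mu$ secures a single exceptional null set. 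For (iii)$\Leftrightarrow$(iv) I would descend to the level of sets: $g$ is $(\phi^{-1}(\ascr))^\mu$-measurable exactly when $g^{-1}(\varDelta)\in(\phi^{-1}(\ascr))^\mu$ for every Borel $\varDelta\subseteq\cbb$, and a set belongs to the completion $(\phi^{-1}(\ascr))^\mu$ if and only if it differs by a $\mu$-null set from some $\phi^{-1}(\varDelta^\prime)$ with $\varDelta^\prime\in\ascr$; ranging $\varDelta$ over all Borel sets reproduces (iv) verbatim.

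Finally, the displayed identity $\overline{\ob{C_\phi}}=L^2(\mu|_{(\phi^{-1}(\ascr))^\mu})$ drops out of the equivalences: a function $g\in L^2(\mu)$ lies in $\overline{\ob{C_\phi}}$ iff it satisfies (iii), i.e.\ iff it is a square-integrable $(\phi^{-1}(\ascr))^\mu$-measurable function, which is exactly the description of $L^2(\mu|_{(\phi^{-1}(\ascr))^\mu})$ as a subspace of $L^2(\mu)$. I expect the only genuine obstacle to be the completion facts invoked for (ii)$\Leftrightarrow$(iii) and (iii)$\Leftrightarrow$(iv)---the ``a.e.\ equal to a sub-$\sigma$-algebra-measurable representative'' statement and the symmetric-difference description of completed sets---both of which lean on $\sigma$-finiteness and properly belong to the appendix; the operator-theoretic input reduces to the one-line use of \eqref{base} and \eqref{final}.
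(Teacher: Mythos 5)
Your proposal is correct and follows essentially the same route as the paper, whose proof is literally the one-line instruction to apply \eqref{base}, \eqref{final}, \eqref{complascr}, \eqref{well-k} and Lemma \ref{hahames-c}; your steps (i)$\Leftrightarrow$(ii)$\Leftrightarrow$(iii) simply unfold Lemma \ref{hahames-c} into its two ingredients (the factorization fact \eqref{well-k} and the completion Lemma \ref{hahames}), and (iii)$\Leftrightarrow$(iv) is exactly \eqref{complascr}. The only cosmetic inaccuracy is your appeal to $\sigma$-finiteness for the completion facts --- the paper's Appendix establishes Lemma \ref{hahames} and \eqref{complascr} without it --- but this does not affect the argument.
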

   \begin{proof}  Apply  \eqref{base}, \eqref{final},
\eqref{complascr}, \eqref{well-k} and Lemma
\ref{hahames-c}.
   \end{proof}
   \begin{cor} \label{adjoint}
If $C_{\phi}$ is densely defined, then the map $V\colon
L^2(\hsf_\phi\D\mu) \to \overline{\ob{C_{\phi}}}$ given
by $Vf = f\circ \phi$ for $f\in L^2(\hsf_\phi\D\mu)$ is
a well-defined unitary operator such that
   \begin{align} \label{adjoint2}
   \begin{aligned} \dz{C_{\phi}^*} & = \big\{g \in
L^2(\mu)\colon \hsf_\phi \cdot V^{-1}Pg \in
L^2(\mu)\big\},
   \\
C_{\phi}^*g & = \hsf_\phi \cdot V^{-1}Pg, \quad g \in
\dz{C_{\phi}^*},
   \end{aligned}
   \end{align}
where $P$ is the orthogonal projection of $L^2(\mu)$
onto
$\overline{\ob{C_{\phi}}}=L^2(\mu|_{(\phi^{-1}(\ascr))^\mu})$.
   \end{cor}
   \begin{proof}
If $C_{\phi}=U|C_{\phi}|$ is the polar decomposition
of $C_{\phi}$, then $C_{\phi}^*=|C_{\phi}|U^*$. This,
Proposition \ref{polar} and Corollary \ref{cos}
complete the proof.
   \end{proof}
   \begin{rem}  \label{remE}
Concerning Corollary \ref{adjoint}, we observe that,
in view of \eqref{CE-1},
$\esf(g):=\esf(g|\phi^{-1}(\ascr))=Pg$ a.e.\ $[\mu]$
and thus $C_\phi^* g = \hsf_\phi \cdot (\esf(g)\circ
\phi^{-1})$ for every $g\in \dz{C_\phi^*}$, where
$\esf(g)\circ \phi^{-1}$ is understood as in
\cite[Lemma 6.4]{ca-hor}.
   \end{rem}
   \section{\label{naqns}Normality and quasinormality}
It turns out that the characterizations of
quasinormality and normality of unbounded composition
operators take the same forms as those for bounded
ones.
   \begin{pro}\label{quasi}
If $C_{\phi}$ is densely defined, then $C_{\phi}$ is
quasinormal if and only if $\hsf_\phi = \hsf_\phi
\circ \phi$ a.e.\ $[\mu]$.
   \end{pro}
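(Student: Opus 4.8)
The plan is to read off quasinormality directly from the explicit polar decomposition $C_\phi = U|C_\phi|$ furnished by Proposition \ref{polar}, combined with the criterion recalled just before Theorem \ref{quasi2n}: a closed densely defined operator is quasinormal if and only if $U|C_\phi| \subseteq |C_\phi| U$. By Proposition \ref{polar} we have $|C_\phi| = M_{\hsf_\phi^{1/2}}$ and $Ug = (g\circ\phi)/(\hsf_\phi\circ\phi)^{1/2}$, the division being legitimate since $\hsf_\phi\circ\phi > 0$ a.e.\ $[\mu]$ by Proposition \ref{hff}; moreover $C_\phi$ is closed and densely defined, so $\dz{|C_\phi|} = \dz{C_\phi}$ and, by Proposition \ref{clos}, $\hsf_\phi < \infty$ a.e.\ $[\mu]$ (and likewise $\hsf_\phi\circ\phi < \infty$ a.e., as $\mu\circ\phi^{-1}\ll\mu$), so every quotient below is a genuine measurable function.

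First I would compute the two composites on $\dz{|C_\phi|} = \dz{C_\phi}$, which equals $\dz{U|C_\phi|}$ because $U$ is everywhere defined. For $g\in\dz{|C_\phi|}$ one gets $U|C_\phi|g = U(\hsf_\phi^{1/2}g) = (\hsf_\phi^{1/2}g)\circ\phi/(\hsf_\phi\circ\phi)^{1/2} = g\circ\phi$, while $|C_\phi|Ug = \hsf_\phi^{1/2}(g\circ\phi)/(\hsf_\phi\circ\phi)^{1/2}$ wherever the right-hand side lies in $L^2(\mu)$. Thus the inclusion $U|C_\phi|\subseteq|C_\phi|U$ reduces entirely to comparing $g\circ\phi$ with $\hsf_\phi^{1/2}(g\circ\phi)/(\hsf_\phi\circ\phi)^{1/2}$.

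For the sufficiency, assuming $\hsf_\phi = \hsf_\phi\circ\phi$ a.e.\ $[\mu]$, the factor $\hsf_\phi^{1/2}/(\hsf_\phi\circ\phi)^{1/2}$ equals $1$ a.e., so $|C_\phi|Ug = g\circ\phi$, which belongs to $L^2(\mu)$ precisely because $g\in\dz{C_\phi}$ forces $g\circ\phi\in L^2(\mu)$ by \eqref{1}. Hence $\dz{|C_\phi|}\subseteq\dz{|C_\phi|U}$ with $U|C_\phi|g = |C_\phi|Ug$ there, giving quasinormality. For the necessity, quasinormality yields $U|C_\phi|g = |C_\phi|Ug$ for every $g\in\dz{|C_\phi|}$, i.e.\ $g\circ\phi\cdot(\hsf_\phi^{1/2} - (\hsf_\phi\circ\phi)^{1/2}) = 0$ a.e.\ $[\mu]$ (using $\hsf_\phi\circ\phi > 0$ a.e.\ to clear the denominator). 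To strip off the factor $g\circ\phi$, I would insert the test functions $g = \chi_{X_n}$ supplied by Corollary \ref{xn2x} (with $m=1$), which lie in $\dz{C_\phi}$ and satisfy $X_n\nearrow X$; then $g\circ\phi = \chi_{\phi^{-1}(X_n)}$ with $\phi^{-1}(X_n)\nearrow X$, so letting $n\to\infty$ forces $\hsf_\phi^{1/2} = (\hsf_\phi\circ\phi)^{1/2}$, i.e.\ $\hsf_\phi = \hsf_\phi\circ\phi$ a.e.\ $[\mu]$.

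The computation itself is short; the only delicate points, and where I expect the real care to be needed, are the bookkeeping of domains and the a.e.-positivity of $\hsf_\phi\circ\phi$. Concretely, the equivalence $g\circ\phi\in L^2(\mu)\Leftrightarrow g\in\dz{C_\phi}$ from \eqref{1} and \eqref{base} is what makes the domain inclusion work in the sufficiency, and Proposition \ref{hff} is what allows the denominator to be cancelled legitimately in both directions; I do not anticipate any substantive obstacle beyond keeping these a.e.\ statements straight.
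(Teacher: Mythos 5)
Your proposal is correct and follows essentially the same route as the paper's own proof: both reduce quasinormality to the inclusion $U|C_\phi|\subseteq|C_\phi|U$ via \cite[Proposition 1]{StSz1}, exploit the explicit formulas $|C_\phi|=M_{\hsf_\phi^{1/2}}$ and \eqref{upo} from Proposition \ref{polar} together with Proposition \ref{hff}, and in the necessity direction test against the characteristic functions $\chi_{X_n}$ from Corollary \ref{xn2x} and let $\phi^{-1}(X_n)\nearrow X$. The domain bookkeeping in your sufficiency argument (that $|C_\phi|Ug=g\circ\phi\in L^2(\mu)$ for $g\in\dz{C_\phi}$) is exactly the paper's step as well.
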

   \begin{proof}
Let $C_\phi = U|C_\phi|$ be the polar decomposition of
$C_\phi$. Suppose that $C_\phi$ is quasinormal. Then
by \cite[Proposition 1]{StSz1}, $U |C_\phi| \subseteq
|C_\phi| U$. Let $\{X_n\}_{n=1}^\infty$ be as in
Corollary \ref{xn2x} (with $m=1$). Then, by \eqref{2},
$\{\chi_{X_n}\}_{n=1}^\infty \subseteq \dz{C_\phi}$,
which together with Proposition \ref{polar} implies
that for every $n\in \nbb$,
   \begin{align*}
\chi_{X_n} \circ \phi = U |C_\phi| \chi_{X_n} =
|C_\phi| U \chi_{X_n} =
\Big(\frac{\hsf_\phi}{\hsf_\phi\circ \phi}\Big)^{1/2}
\, \chi_{X_n} \circ \phi \quad \text{a.e.\ $[\mu]$.}
   \end{align*}
Since $X_n \nearrow X$ as $n\to \infty$, we conclude
that $\hsf_\phi = \hsf_\phi \circ \phi$ a.e.\ $[\mu]$.

For the converse, take $f \in \dz{|C_\phi|}$. By
\eqref{upo} and $\dz{|C_\phi|}=\dz{C_\phi}$, we have
   \begin{align*}
\hsf_\phi^{1/2} Uf =
\Big(\frac{\hsf_\phi}{\hsf_\phi\circ \phi}\Big)^{1/2}
f\circ \phi = f\circ \phi \in L^2(\mu).
   \end{align*}
Hence, by Proposition \ref{polar}\,(i), $f \in
\dz{|C_\phi|U}$ and $|C_\phi|U f = C_\phi f =
U|C_\phi|f$. Therefore, $U|C_\phi| \subseteq
|C_\phi|U$. Applying \cite[Proposition 1]{StSz1}
completes the proof.
   \end{proof}
   \begin{pro}
If $\overline{\dz{C_{\phi}}}=L^2(\mu)$, then the
following are equivalent{\em :}
   \begin{enumerate}
   \item[(i)] $C_{\phi}$ is normal,
   \item[(ii)] $\hsf_\phi = \hsf_\phi
\circ \phi$ a.e.\ $[\mu]$ and $\jd{C_{\phi}^*}
\subseteq \jd{C_{\phi}}$,
   \item[(iii)] $\hsf_\phi = \hsf_\phi
\circ \phi$ a.e.\ $[\mu]$ and $\jd{C_{\phi}^*}=\{0\}$,
   \item[(iv)] $\hsf_\phi = \hsf_\phi
\circ \phi$ a.e.\ $[\mu]$ and for every $\varDelta \in
\ascr$ there exists $\varDelta^\prime \in \ascr$ such
that $\mu(\varDelta \vartriangle
\phi^{-1}(\varDelta^\prime))=0$.
   \end{enumerate}
Moreover, if $C_{\phi}$ is normal, then
$\jd{C_{\phi}}=\{0\}$ and $\hsf_\phi > 0$ a.e.\
$[\mu]$.
   \end{pro}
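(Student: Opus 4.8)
The plan is to derive the four equivalences from results already in hand, so that the only genuine work is the translation (iii)$\Leftrightarrow$(iv); the abstract normality criterion does the rest. Applying Theorem \ref{quasi2n} to the closed, densely defined operator $C_\phi$, normality is equivalent to quasinormality together with $\jd{C_\phi^*}\subseteq\jd{C_\phi}$, and Proposition \ref{quasi} rewrites quasinormality as $\hsf_\phi=\hsf_\phi\circ\phi$ a.e.\ $[\mu]$; this yields (i)$\Leftrightarrow$(ii) at once. For (ii)$\Leftrightarrow$(iii) I would note that the common premise $\hsf_\phi=\hsf_\phi\circ\phi$ a.e.\ $[\mu]$ forces $\jd{C_\phi}=\{0\}$ by Corollary \ref{hfifi}, so under that premise the inclusion $\jd{C_\phi^*}\subseteq\jd{C_\phi}$ collapses to $\jd{C_\phi^*}=\{0\}$; both implications are then immediate.

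The substantive step is (iii)$\Leftrightarrow$(iv). Since $C_\phi$ is closed and densely defined, $\jd{C_\phi^*}=\overline{\ob{C_\phi}}^{\perp}$, so (iii) amounts to $\overline{\ob{C_\phi}}=L^2(\mu)$, and I would exploit the description of membership in $\overline{\ob{C_\phi}}$ furnished by Corollary \ref{cos}. For (iv)$\Rightarrow$(iii): for any $g\in L^2(\mu)$ and any Borel $\varDelta\subseteq\cbb$ the set $g^{-1}(\varDelta)$ lies in $\ascr$, so the hypothesis of (iv) provides $\varDelta^\prime\in\ascr$ with $\mu(g^{-1}(\varDelta)\vartriangle\phi^{-1}(\varDelta^\prime))=0$, whence Corollary \ref{cos} places $g$ in $\overline{\ob{C_\phi}}$ and $\overline{\ob{C_\phi}}=L^2(\mu)$ follows. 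For (iii)$\Rightarrow$(iv): when $\mu(\varDelta)<\infty$ one applies Corollary \ref{cos}\,(iv) to $g=\chi_\varDelta$ with the Borel set $\{1\}$ and reads off $\varDelta^\prime$ directly. The main obstacle is the passage to arbitrary $\varDelta\in\ascr$ of possibly infinite measure: here I would fix $X_n\nearrow X$ with $\mu(X_n)<\infty$, use Corollary \ref{cos}\,(iii) (recalling $\overline{\ob{C_\phi}}=L^2(\mu|_{(\phi^{-1}(\ascr))^\mu})$ from Corollary \ref{adjoint}) to conclude $\varDelta\cap X_n\in(\phi^{-1}(\ascr))^\mu$ for each $n$, invoke closure of the $\sigma$-algebra $(\phi^{-1}(\ascr))^\mu$ under countable unions to get $\varDelta=\bigcup_n(\varDelta\cap X_n)\in(\phi^{-1}(\ascr))^\mu$, and finally use the standard completion property to select a single $\varDelta^\prime\in\ascr$ with $\mu(\varDelta\vartriangle\phi^{-1}(\varDelta^\prime))=0$.

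For the ``moreover'' clause I would observe that if $C_\phi$ is normal then (i)$\Rightarrow$(iii) gives $\hsf_\phi=\hsf_\phi\circ\phi$ a.e.\ $[\mu]$, so Corollary \ref{hfifi} (equivalently, the ``moreover'' part of Theorem \ref{quasi2n}) yields $\jd{C_\phi}=\{0\}$; Propositions \ref{nullspace} and \ref{kercf} then turn $\jd{C_\phi}=\{0\}$ into $\mu(\nsf_\phi)=0$, which is exactly $\hsf_\phi>0$ a.e.\ $[\mu]$.
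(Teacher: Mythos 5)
Your proof is correct and follows essentially the same route as the paper: Theorem \ref{quasi2n} combined with Proposition \ref{quasi} handles (i)$\Leftrightarrow$(ii)$\Leftrightarrow$(iii) (with Corollary \ref{hfifi} collapsing the kernel condition), and Corollary \ref{cos} together with \eqref{complascr} handles (iii)$\Leftrightarrow$(iv), just as in the paper. The only cosmetic difference is that where the paper cites Lemma \ref{l2zup} to conclude that $(\phi^{-1}(\ascr))^\mu=\ascr$, you reprove that step by hand, applying Corollary \ref{cos}\,(iv) to characteristic functions of finite-measure sets and passing to arbitrary sets via a $\sigma$-finite exhaustion and countable unions.
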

   \begin{proof}
(i)$\Rightarrow$(iii) Since normal operators are
always quasinormal, we infer from Proposition
\ref{quasi} that $\hsf_\phi = \hsf_\phi \circ \phi$
a.e.\ $[\mu]$. Clearly, $\jd{C_{\phi}} =
\jd{C_{\phi}^*}$. That $\jd{C_{\phi}^*}=\{0\}$ follows
from Corollary \ref{hfifi}.

(iii)$\Rightarrow$(ii) Evident.

(ii)$\Rightarrow$(i) This is a direct consequence of
Proposition \ref{quasi} and Theorem \ref{quasi2n}.

(iii)$\Leftrightarrow$(iv) Since $\jd{C_{\phi}^*}
=\{0\}$ if and only if $\ob{C_{\phi}}$ is dense in
$L^2(\mu)$, it suffices to apply Corollary \ref{cos},
Lemma \ref{l2zup} and \eqref{complascr}.

The ''moreover'' part follows from the above and
Proposition \ref{hff}.
   \end{proof}
   \section{\label{fns}Formal normality}
In this section we show that formally normal
composition operators are automatically normal. We
begin by proving a result which is of
measure-theoretical nature. We refer the reader to
Appendix \ref{app2} for the definition and basic
properties of $\esf(\cdot|\phi^{-1}(\ascr))$. For
brevity, we write $\esf(\cdot) =
\esf(\cdot|\phi^{-1}(\ascr))$.
   \begin{lem} \label{hf2}
If $\phi$ is a nonsingular transformation of $X$, then
the following two conditions are equivalent for every
$n\in \nbb${\em :}
   \begin{enumerate}
   \item[(i)] $\hsf_{\phi^{n+1}} = \hsf_{\phi^n} \cdot \hsf_\phi$ a.e.\
$[\mu]$,
   \item[(ii)] $\esf(\hsf_{\phi^n}) = \hsf_{\phi^n}
\circ \phi$ a.e.\ $[\mu|_{\phi^{-1}(\ascr)}]$.
   \end{enumerate}
   \end{lem}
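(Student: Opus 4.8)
The plan is to recast condition (ii), which is a pointwise identity between two $\phi^{-1}(\ascr)$-measurable functions, as a family of integral identities indexed by $\varDelta\in\ascr$, and then to evaluate each side with the measure transport theorem (cf.\ \eqref{base}) and the Radon--Nikodym relation \eqref{1.5}. The key observation is that $\hsf_{\phi^n}\circ\phi$ is itself $\phi^{-1}(\ascr)$-measurable, so by the characterizing (uniqueness) property of the conditional expectation $\esf(\cdot)=\esf(\cdot|\phi^{-1}(\ascr))$ recorded in Appendix \ref{app2}, condition (ii) holds if and only if $\hsf_{\phi^n}\circ\phi$ satisfies the same defining relation as $\esf(\hsf_{\phi^n})$, that is,
   \begin{align} \label{starX}
\int_{\phi^{-1}(\varDelta)} \hsf_{\phi^n}\circ\phi \,\D\mu = \int_{\phi^{-1}(\varDelta)} \hsf_{\phi^n} \,\D\mu, \quad \varDelta \in \ascr,
   \end{align}
since the sets in $\phi^{-1}(\ascr)$ are precisely those of the form $\phi^{-1}(\varDelta)$ with $\varDelta\in\ascr$.

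First I would compute the left-hand side of \eqref{starX}. As $\chi_{\phi^{-1}(\varDelta)} = \chi_\varDelta\circ\phi$, the integrand equals $(\chi_\varDelta\hsf_{\phi^n})\circ\phi$, and the measure transport theorem gives
   \begin{align*}
\int_{\phi^{-1}(\varDelta)} \hsf_{\phi^n}\circ\phi \,\D\mu = \int_X (\chi_\varDelta\hsf_{\phi^n})\circ\phi \,\D\mu = \int_\varDelta \hsf_{\phi^n}\hsf_\phi \,\D\mu.
   \end{align*}
For the right-hand side I would use \eqref{1.5} applied to $\phi^n$ together with the identity $(\phi^{n+1})^{-1}(\varDelta) = (\phi^n)^{-1}(\phi^{-1}(\varDelta))$, which yields
   \begin{align*}
\int_{\phi^{-1}(\varDelta)} \hsf_{\phi^n} \,\D\mu = (\mu\circ\phi^{-n})(\phi^{-1}(\varDelta)) = \mu\circ\phi^{-(n+1)}(\varDelta) = \int_\varDelta \hsf_{\phi^{n+1}} \,\D\mu.
   \end{align*}
Hence \eqref{starX} is equivalent to $\int_\varDelta \hsf_{\phi^n}\hsf_\phi \,\D\mu = \int_\varDelta \hsf_{\phi^{n+1}} \,\D\mu$ for every $\varDelta\in\ascr$, and since $\mu$ is $\sigma$-finite this holds if and only if $\hsf_{\phi^{n+1}} = \hsf_{\phi^n}\hsf_\phi$ a.e.\ $[\mu]$, which is condition (i). This would close the equivalence.

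I expect the only delicate point to be the measure-theoretic bookkeeping around the conditional expectation, rather than any single hard estimate. Specifically, I must apply the characterization of $\esf(\cdot)$ on the sub-$\sigma$-algebra $\phi^{-1}(\ascr)$ in the correct generality: the functions $\hsf_{\phi^n}$ and $\hsf_{\phi^{n+1}}$ are only $[0,\infty]$-valued, so I should invoke the defining relation and the associated uniqueness for nonnegative (not merely integrable) functions, as set up in Appendix \ref{app2}, and track the $\sigma$-finiteness needed to pass from equality of all the integrals $\int_\varDelta(\cdot)\,\D\mu$ to a.e.\ equality of the integrands. I should also remark at the outset that equality a.e.\ $[\mu|_{\phi^{-1}(\ascr)}]$ in (ii) coincides with equality a.e.\ $[\mu]$, because the exceptional set lies in $\phi^{-1}(\ascr)\subseteq\ascr$; once that is noted, the argument reduces to the two transport computations on each side of \eqref{starX}.
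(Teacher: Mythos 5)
Your proof is correct and follows essentially the same route as the paper's: both evaluate the defining integrals of $\esf(\hsf_{\phi^n})$ over sets $\phi^{-1}(\varDelta)$ via the measure transport theorem and the Radon--Nikodym relation \eqref{1.5}, and conclude by uniqueness (of the conditional expectation, resp.\ the Radon--Nikodym derivative) under $\sigma$-finiteness. The only difference is organizational: you compute both sides of the defining relation unconditionally and obtain the equivalence in one pass, whereas the paper writes the chain of equalities under hypothesis (i) and disposes of the converse by ``arguing as above.''
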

   \begin{proof}
(i)$\Rightarrow$(ii) Note that
   \allowdisplaybreaks
   \begin{align*}
\int_{\phi^{-1}(\varDelta)} \esf(\hsf_{\phi^n}) \D \mu
&= \int_{\phi^{-1}(\varDelta)} \hsf_{\phi^n} \D \mu =
\mu((\phi^{-n}(\phi^{-1}(\varDelta)))
   \\
&= \mu((\phi^{-(n+1)}(\varDelta)) = \int_{\varDelta}
\hsf_{\phi^{n+1}} \D\mu = \int_{\varDelta}
\hsf_{\phi^n} \cdot \hsf_\phi \D\mu
   \\
& = \int_X (\chi_\varDelta \circ \phi) (\hsf_{\phi^n}
\circ \phi) \D\mu = \int_{\phi^{-1}(\varDelta)}
\hsf_{\phi^n} \circ \phi \D\mu, \quad \varDelta \in
\ascr,
   \end{align*}
which, by the uniqueness assertion in the
Radon-Nikodym theorem, implies (ii).

Arguing as above, we can prove the reverse
implication.
   \end{proof}
The next two lemmas are key ingredients of the proof
of Theorem \ref{fn=n}.
   \begin{lem} \label{afn}
Suppose that $C_\phi^2$ is densely defined. Then the
following two conditions are equivalent{\em :}
   \begin{enumerate}
   \item[(i)] $\dz{C_\phi^2} \subseteq
\dz{C_\phi^* C_\phi}$ and $\|C_\phi^2 f\| =
\|C_\phi^*C_\phi f\|$ for all $f\in \dz{C_\phi^2}$,
   \item[(ii)]  $\hsf_{\phi^2} = \hsf_{\phi}^2$ a.e.\
$[\mu]$.
   \end{enumerate}
Moreover, if $C_\phi$ is formally normal, then {\em
(i)} holds.
   \end{lem}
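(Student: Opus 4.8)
The plan is to reduce both conditions of the lemma to a comparison of the densities $\hsf_\phi^2$ and $\hsf_{\phi^2}$, and then to recover an a.e.\ identity from an identity of integrals by testing against a sufficiently rich family of characteristic functions. First I would record the two norm formulas that make the reduction possible. Since $C_\phi^2$ is densely defined, so is $C_\phi$, and Proposition \ref{polar}\,(i) gives $C_\phi^*C_\phi = M_{\hsf_\phi}$; hence $\dz{C_\phi^*C_\phi} = \{f\in L^2(\mu)\colon \hsf_\phi f \in L^2(\mu)\}$ and $\|C_\phi^*C_\phi f\|^2 = \int_X |f|^2 \hsf_\phi^2 \D\mu$ on this domain. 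On the other hand, \eqref{3} together with \eqref{base} yields $\|C_\phi^2 f\|^2 = \int_X |f|^2 \hsf_{\phi^2}\D\mu$ for $f \in \dz{C_\phi^2}$. Thus condition (i) says precisely that $\int_X|f|^2\hsf_\phi^2\D\mu = \int_X|f|^2\hsf_{\phi^2}\D\mu$ for all $f\in\dz{C_\phi^2}$, with the implicit requirement that the left side be finite.

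The implication (ii)$\Rightarrow$(i) is then immediate: if $\hsf_{\phi^2} = \hsf_\phi^2$ a.e.\ $[\mu]$, then for $f\in\dz{C_\phi^2}$ the integral $\int_X |f|^2 \hsf_\phi^2\D\mu = \|C_\phi^2 f\|^2$ is finite, so $f\in\dz{C_\phi^*C_\phi}$, and the two displayed formulas give the asserted equality of norms. The substantive direction is (i)$\Rightarrow$(ii), and here the main obstacle is that the norm equality only supplies information after integration, whereas the conclusion is pointwise; to bridge this I would localise to sets on which the relevant densities are bounded. Because $C_\phi^2$ is densely defined, $\overline{\dz{C_\phi^2}} = L^2(\mu)$, so Corollary \ref{xn2x} (with $m=2$) furnishes sets $X_n \nearrow X$ of finite measure on which $\hsf_\phi + \hsf_{\phi^2} \Le n$ a.e.\ $[\mu]$. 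For $\varDelta\in\ascr$ with $\varDelta\subseteq X_n$ the function $\chi_\varDelta$ then lies in $\dz{C_\phi^2}$ (hence, by the inclusion in (i), in $\dz{C_\phi^*C_\phi}$), and applying the norm equality in (i) to $f=\chi_\varDelta$ gives $\int_\varDelta \hsf_{\phi^2}\D\mu = \int_\varDelta \hsf_\phi^2\D\mu$. Since $\hsf_{\phi^2}$ and $\hsf_\phi^2$ are both integrable over $X_n$ and $\varDelta\subseteq X_n$ is arbitrary, the two densities coincide a.e.\ on $X_n$; letting $n\to\infty$ yields $\hsf_{\phi^2} = \hsf_\phi^2$ a.e.\ $[\mu]$.

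For the ``moreover'' clause I would argue directly from formal normality, with no reliance on the density computation. If $f\in\dz{C_\phi^2}$, then $C_\phi f\in\dz{C_\phi}\subseteq\dz{C_\phi^*}$, whence $f\in\dz{C_\phi^*C_\phi}$; this is the inclusion in (i). Applying the defining identity $\|C_\phi g\| = \|C_\phi^* g\|$ of formal normality to $g=C_\phi f$ produces $\|C_\phi^2 f\| = \|C_\phi^*C_\phi f\|$, which is the remaining equality. I expect no difficulty here beyond keeping track of domains. In summary, the only genuinely delicate point is the localisation step in (i)$\Rightarrow$(ii), and Corollary \ref{xn2x} is exactly the device that makes the required test functions available.
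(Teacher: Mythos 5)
Your proposal is correct and follows essentially the same route as the paper: both directions reduce to the identity $\|C_\phi^*C_\phi f\|^2=\int_X|f|^2\hsf_\phi^2\,d\mu$ via Proposition \ref{polar}\,(i) together with $\|C_\phi^2f\|^2=\int_X|f|^2\hsf_{\phi^2}\,d\mu$, and the passage from the integral identity to the pointwise one uses exactly the localisation by the sets $X_n$ of Corollary \ref{xn2x} with $m=2$. Your explicit write-out of the ``moreover'' clause is just the detail the paper dismisses as obvious, and it is correct.
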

   \begin{proof}
(i)$\Rightarrow$(ii) Take $f \in \dz{C_\phi^2}$. Then,
by Proposition \ref{polar}(i), we have
   \begin{align}  \label{int2}
\int_X |f|^2 \hsf_\phi^2 \D \mu = \|M_{\hsf_\phi}
f\|^2 = \|C_\phi^* C_\phi f\|^2 = \|C_{\phi^2} f\|^2 =
\int_X |f|^2 h_{\phi^2} \D\mu.
   \end{align}
Let $\{X_n\}_{n=1}^\infty$ be as in Corollary
\ref{xn2x} (with $m=2$). Then
$\{\chi_{X_n}\}_{n=1}^\infty \subseteq \dz{C_\phi^2}$
and
   \begin{align*}
\int_{\varDelta} \hsf_\phi^2 \D \mu
\overset{\eqref{int2}}= \int_{\varDelta} h_{\phi^2}
\D\mu < \infty, \quad \varDelta \in \ascr, \,
\varDelta \subseteq X_n, \, n \in \nbb,
   \end{align*}
which implies that $\hsf_\phi^2 = h_{\phi^2}$ a.e.\
$[\mu]$ on $X_n$ for every $n\in \nbb$. Hence (ii)
holds.

(ii)$\Rightarrow$(i) Take $f\in \dz{C_\phi^2}$. Then,
by \eqref{3}, $\int_X |f\hsf_\phi|^2 \D\mu = \int_X
|f|^2 \hsf_{\phi^2} \D\mu < \infty$, which means that
$f \in \dz{M_{\hsf_\phi}} = \dz{C_\phi^*C_\phi}$.
Arguing as in \eqref{int2}, we obtain (i).

The ``moreover'' part is obvious.
   \end{proof}
   \begin{lem} \label{glowny}
If $\phi$ is nonsingular transformation of $X$, then
the following conditions are equivalent{\em :}
   \begin{enumerate}
   \item[(i)] $C_\phi$ is normal,
   \item[(ii)] $C_\phi$ is formally normal
and $\overline{\dz{C_\phi^2}}=L^2(\mu)$.
   \end{enumerate}
   \end{lem}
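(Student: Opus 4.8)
The plan is to prove the two implications separately; the content lies in (ii)$\Rightarrow$(i). For (i)$\Rightarrow$(ii), recall that normality implies formal normality, so it remains only to check $\overline{\dz{C_\phi^2}}=L^2(\mu)$. Since $C_\phi$ is normal, the spectral theorem furnishes a spectral measure $E$ for $C_\phi$; for every $f\in L^2(\mu)$ the truncations $E(\{z\colon|z|\Le n\})f$ lie in $\dzn{C_\phi}\subseteq\dz{C_\phi^2}$ and converge to $f$, so $\dz{C_\phi^2}$ is dense.

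For (ii)$\Rightarrow$(i) I would first extract the two measure-theoretic consequences of formal normality. Formal normality forces hyponormality, so Corollary \ref{hipnul} gives $\jd{C_\phi}=\{0\}$, that is $\hsf_\phi>0$ a.e.\ $[\mu]$. Since $\overline{\dz{C_\phi^2}}=L^2(\mu)$ makes $C_\phi^2$ densely defined, the ``moreover'' part of Lemma \ref{afn} yields $\hsf_{\phi^2}=\hsf_\phi^2$ a.e.\ $[\mu]$, and feeding this into Lemma \ref{hf2} (with $n=1$) gives $\esf(\hsf_\phi)=\hsf_\phi\circ\phi$ a.e. Next I would evaluate both sides of $\|C_\phi f\|=\|C_\phi^* f\|$. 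By \eqref{base}, $\|C_\phi f\|^2=\int_X|f|^2\hsf_\phi\D\mu$, while the description of $C_\phi^*$ in Corollary \ref{adjoint} (equivalently Remark \ref{remE}), combined with the measure transport theorem, gives the clean formula $\|C_\phi^* f\|^2=\int_X(\hsf_\phi\circ\phi)\,|\esf f|^2\D\mu$ for every $f\in\dz{C_\phi}$. Thus formal normality reads $\int_X|f|^2\hsf_\phi\D\mu=\int_X(\hsf_\phi\circ\phi)\,|\esf f|^2\D\mu$ for all $f\in\dz{C_\phi}$; call this identity $(\ast)$.

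From $(\ast)$ I would deduce quasinormality, i.e.\ $\hsf_\phi=\hsf_\phi\circ\phi$ a.e.\ (Proposition \ref{quasi}). The conditional Cauchy--Schwarz inequality $|\esf f|^2\Le\esf(|f|^2)$, applied to the right-hand side of $(\ast)$ and using the defining property of $\esf$ to pull the $\phi^{-1}(\ascr)$-measurable factor $\hsf_\phi\circ\phi$ through $\esf$, yields $\int_X|f|^2\hsf_\phi\D\mu\Le\int_X|f|^2(\hsf_\phi\circ\phi)\D\mu$ for all $f\in\dz{C_\phi}$; testing against indicators of finite-measure sets on which $\hsf_\phi$ is bounded (cf.\ Corollary \ref{xn2x}) gives the pointwise bound $\hsf_\phi\Le\hsf_\phi\circ\phi$ a.e. This one-sided inequality combines with $\esf(\hsf_\phi)=\hsf_\phi\circ\phi$ to force equality: the nonnegative function $\hsf_\phi\circ\phi-\hsf_\phi$ has vanishing integral over every $\phi^{-1}(\ascr)$-measurable set (these exhaust $X$, since $\mu\circ\phi^{-1}$ is $\sigma$-finite by Proposition \ref{clos}), hence is $0$ a.e. So $\hsf_\phi=\hsf_\phi\circ\phi$ and $C_\phi$ is quasinormal.

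Finally I would upgrade quasinormality to normality. With $\hsf_\phi$ now $\phi^{-1}(\ascr)$-measurable, identity $(\ast)$ becomes $\int_X\big(\esf(|f|^2)-|\esf f|^2\big)\hsf_\phi\D\mu=0$; since the integrand is nonnegative and $\hsf_\phi>0$ a.e., the conditional variance $\esf(|f-\esf f|^2)=\esf(|f|^2)-|\esf f|^2$ vanishes, whence $f=\esf f$ for every $f\in\dz{C_\phi}$. Thus $\dz{C_\phi}\subseteq\overline{\ob{C_\phi}}$ by Corollary \ref{cos}, and density of $\dz{C_\phi}$ forces $\overline{\ob{C_\phi}}=L^2(\mu)$, i.e.\ $\jd{C_\phi^*}=\{0\}=\jd{C_\phi}$; quasinormality together with $\jd{C_\phi^*}\subseteq\jd{C_\phi}$ yields normality by Theorem \ref{quasi2n}. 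I expect the main obstacle to be twofold: deriving the closed formula for $\|C_\phi^* f\|^2$, and the sign-fixing step, where the conditional Cauchy--Schwarz inequality supplies only one inequality and it is precisely the identity $\esf(\hsf_\phi)=\hsf_\phi\circ\phi$ of Lemma \ref{hf2} that pins down equality; recognizing the equality case of Cauchy--Schwarz as forcing $\phi^{-1}(\ascr)$-measurability of every element of $\dz{C_\phi}$ is what closes the argument.
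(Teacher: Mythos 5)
Your argument is correct in substance, but it takes a genuinely different route from the paper's. The paper's proof of (ii)$\Rightarrow$(i) never passes through quasinormality: using Lemma \ref{afn} and Corollary \ref{adjoint} it first establishes the domain identity $\dz{C_\phi}\cap\overline{\ob{C_\phi}}=\dz{C_\phi^*}\cap\overline{\ob{C_\phi}}$, then shows that the orthogonal projection $P$ onto $\overline{\ob{C_\phi}}$ maps $\dz{C_\phi}$ into itself, and uses Corollary \ref{fnnul} to annihilate the component of each $f\in\dz{C_\phi}$ lying in $\jd{C_\phi^*}$; this yields $\dz{C_\phi}\subseteq\overline{\ob{C_\phi}}$, hence $\overline{\ob{C_\phi}}=L^2(\mu)$, hence $\dz{C_\phi}=\dz{C_\phi^*}$, and normality follows from the criterion ``closed, $\dz{N}=\dz{N^*}$ and $\|Nf\|=\|N^*f\|$'' quoted in Section 2, with no appeal to Theorem \ref{quasi2n}. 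Your route (quasinormality via $\hsf_\phi=\hsf_\phi\circ\phi$, then trivial cokernel, then Theorem \ref{quasi2n}) is essentially a self-contained version of the remark following Theorem \ref{fn=n}: there the authors derive $\hsf_\phi\circ\phi=\hsf_\phi$ by a similar conditional-expectation computation but then cite an unpublished result from \cite{StSz-b} (quasinormal and formally normal implies normal), whereas you replace that citation by proving $\jd{C_\phi^*}=\{0\}$ through the equality case of conditional Cauchy--Schwarz. What the paper's argument buys is economy of means: it needs no conditional Cauchy--Schwarz and no integrability bookkeeping. What yours buys is extra structural information along the way (explicit quasinormality, and the fact that every $f\in\dz{C_\phi}$ is $(\phi^{-1}(\ascr))^\mu$-measurable) and independence from the unpublished reference.

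Two details in your sketch need repair. First, the conditional Cauchy--Schwarz inequality $|\esf f|^2\Le\esf(|f|^2)$, its equality case, and the pull-out property you use are not provided in Appendix \ref{app2}; they are standard for the $\sigma$-finite measure $\mu|_{\phi^{-1}(\ascr)}$ (note that $\esf(1)=1$ there), but they must be justified. Second, in your sign-fixing step, $\esf(\hsf_\phi)=\hsf_\phi\circ\phi$ gives equality of the integrals of $\hsf_\phi$ and $\hsf_\phi\circ\phi$ over each set $\phi^{-1}(\varDelta)$, but both integrals may be infinite, and $\sigma$-finiteness of $\mu\circ\phi^{-1}$ alone does not produce an exhausting sequence on which they are finite. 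Use instead that $C_{\phi^2}$ is densely defined (Proposition \ref{prod}(ii)), so that $\mu\circ\phi^{-2}$ is $\sigma$-finite by Proposition \ref{clos}; choosing $\varDelta_n\nearrow X$ with $\mu(\phi^{-2}(\varDelta_n))<\infty$, both integrals over $\phi^{-1}(\varDelta_n)$ equal $\mu(\phi^{-2}(\varDelta_n))$ (by \eqref{1.5}, \eqref{base} and $\hsf_{\phi^2}=\hsf_\phi^2$), so the nonnegative difference $\hsf_\phi\circ\phi-\hsf_\phi$ is integrable with integral zero on $\phi^{-1}(\varDelta_n)$, and your argument closes.
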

   \begin{proof}
(i)$\Rightarrow$(ii) Evident (since powers of normal
operators are normal, cf.\ \cite{b-s}).

(ii)$\Rightarrow$(i) First we will show that
   \begin{align} \label{step1}
\dz{C_\phi} \cap \overline{\ob{C_\phi}} =
\dz{C_\phi^*} \cap \overline{\ob{C_\phi}}.
   \end{align}
Indeed, if $g \in \dz{C_\phi^*} \cap
\overline{\ob{C_\phi}}$, then by \eqref{final} there
exists $f \in L^2(\hsf_\phi\D\mu)$ such that $g = f
\circ \phi$ a.e.\ $[\mu]$. It follows from Corollary
\ref{adjoint} that $\hsf_\phi f = \hsf_\phi V^{-1}g
\in L^2(\mu)$. This and Lemma \ref{afn} imply that
   \begin{align*}
\int_X |g\circ \phi|^2 \D\mu = \int_X |f \circ
\phi^2|^2 \D\mu = \int_X |f|^2 h_{\phi^2}\D\mu =
\int_X |fh_{\phi}|^2\D\mu< \infty,
   \end{align*}
which means that $g \in \dz{C_\phi}$. This yields
\eqref{step1}.

Let $P$ be the orthogonal projection of $L^2(\mu)$
onto $\overline{\ob{C_{\phi}}}$. We will prove that
   \begin{align} \label{step2}
P \dz{C_\phi} \subseteq \dz{C_\phi}.
   \end{align}
Indeed, take $f \in \dz{C_\phi}$. Since $(I-P)f \in
\jd{C_\phi^*}$ and $\dz{C_\phi} \subseteq
\dz{C_\phi^*}$, we get $Pf \in \dz{C_\phi^*} \cap
\overline{\ob{C_\phi}}$. Hence by \eqref{step1},
$Pf\in \dz{C_\phi}$, which proves \eqref{step2}.

It follows from \eqref{step2} and Corollary
\ref{fnnul} that
   \begin{align*}
\dz{C_\phi} \subseteq \big(\dz{C_\phi}\cap
\jd{C_\phi^*}\big) \oplus \big(\dz{C_\phi}\cap
\overline{\ob{C_\phi}}\big) = \dz{C_\phi}\cap
\overline{\ob{C_\phi}},
   \end{align*}
which together with $\overline{\dz{C_\phi}}=L^2(\mu)$
imply that $\overline{\ob{C_\phi}}=L^2(\mu)$.
Therefore, by \eqref{step1},
$\dz{C_\phi}=\dz{C_\phi^*}$, which completes the
proof.
   \end{proof}
As is shown below, the assumption
$\overline{\dz{C_\phi^2}}=L^2(\mu)$ in Lemma
\ref{glowny} can be dropped without spoiling its
conclusion.
   \begin{thm} \label{fn=n}
Let $\phi$ be a nonsingular transformation of $X$.
Then $C_\phi$ is normal if and only if $C_\phi$ is
formally normal.
   \end{thm}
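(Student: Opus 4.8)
The forward implication is immediate: a normal operator is closed with $\dz{C_\phi}=\dz{C_\phi^*}$ and $\|C_\phi f\|=\|C_\phi^* f\|$ on this common domain, which is precisely formal normality. The whole content lies in the converse, and the plan is to reduce it to Lemma \ref{glowny}. That lemma produces normality from formal normality together with $\overline{\dz{C_\phi^2}}=L^2(\mu)$, so it suffices to show that a formally normal $C_\phi$ automatically has densely defined square. Since $C_\phi$ is densely defined, $\hsf_\phi<\infty$ a.e.\ $[\mu]$, and by Corollary \ref{pr1}\,(ii) (or Proposition \ref{clos}) the density of $\dz{C_\phi^2}$ is equivalent to $\hsf_{\phi^2}<\infty$ a.e.\ $[\mu]$. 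By Lemma \ref{hf2} with $n=1$, it is therefore enough to establish the single measure-theoretic identity
\begin{align*}
\esf(\hsf_\phi) = \hsf_\phi\circ\phi \quad\text{a.e. } [\mu|_{\phi^{-1}(\ascr)}],
\end{align*}
since this forces $\hsf_{\phi^2}=\hsf_\phi^{2}<\infty$ a.e.\ $[\mu]$.

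To exploit formal normality I would first record explicit forms of both norms. From \eqref{base}, $\|C_\phi f\|^2=\int_X|f|^2\hsf_\phi\D\mu$. For the adjoint, writing $\esf(f)=w\circ\phi$ with $w$ an $\ascr$-measurable function (possible because $\esf(f)$ is $\phi^{-1}(\ascr)$-measurable), Corollary \ref{adjoint} and Remark \ref{remE} give $C_\phi^* f=\hsf_\phi\, w$, whence by the measure transport theorem \eqref{base}
\begin{align*}
\|C_\phi^* f\|^2 = \int_X \hsf_\phi^2\,|w|^2 \D\mu = \int_X (\hsf_\phi\circ\phi)\,|\esf(f)|^2 \D\mu, \quad f\in\dz{C_\phi}\subseteq\dz{C_\phi^*}.
\end{align*}
Equating the two norms and then replacing $f$ by $\chi_{\phi^{-1}(\varDelta)}f$ for $\varDelta\in\ascr$ (which stays in $\dz{C_\phi}$ and obeys $\esf(\chi_{\phi^{-1}(\varDelta)}f)=\chi_{\phi^{-1}(\varDelta)}\esf(f)$) localizes the equality over every member of $\phi^{-1}(\ascr)$. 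Since both $|f|^2\hsf_\phi$ and $(\hsf_\phi\circ\phi)|\esf(f)|^2$ are integrable, the uniqueness of conditional expectation yields
\begin{align*}
\esf(|f|^2\hsf_\phi) = (\hsf_\phi\circ\phi)\,|\esf(f)|^2 \quad\text{a.e. } [\mu|_{\phi^{-1}(\ascr)}], \quad f\in\dz{C_\phi}.
\end{align*}

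The decisive step is to feed in the right test functions. Let $\{X_n\}_{n=1}^\infty$ be the sequence from Corollary \ref{xn2x} taken with $m=1$, so that $\hsf_\phi\Le n$ on $X_n$, $\mu(X_n)<\infty$ and $X_n\nearrow X$; in particular $\chi_{X_n}\in\dz{C_\phi}$. Specializing the previous identity to $f=\chi_{X_n}$ gives
\begin{align*}
\esf(\chi_{X_n}\hsf_\phi) = (\hsf_\phi\circ\phi)\,\esf(\chi_{X_n})^2.
\end{align*}
As $n\to\infty$ the monotone convergence theorem for the conditional expectation $\esf(\cdot)$ applies on both sides: $\chi_{X_n}\hsf_\phi\nearrow\hsf_\phi$ gives $\esf(\chi_{X_n}\hsf_\phi)\nearrow\esf(\hsf_\phi)$, while $\chi_{X_n}\nearrow 1$ gives $\esf(\chi_{X_n})\nearrow\esf(1)=1$; since $\hsf_\phi\circ\phi<\infty$ a.e.\ $[\mu]$, the right-hand side tends to $\hsf_\phi\circ\phi$. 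This produces the desired identity $\esf(\hsf_\phi)=\hsf_\phi\circ\phi$, hence $\hsf_{\phi^2}=\hsf_\phi^2<\infty$ a.e.\ $[\mu]$, hence $\overline{\dz{C_\phi^2}}=L^2(\mu)$, and Lemma \ref{glowny} then closes the argument.

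I expect the main obstacle to be exactly this final passage to the limit: the identity $\esf(\hsf_\phi)=\hsf_\phi\circ\phi$ must be secured even on the set where a priori $\hsf_{\phi^2}$ could be $+\infty$, and it is precisely this set that one cannot probe with $\phi^{-1}(\ascr)$-measurable test functions in $\dz{C_\phi}$, since membership of such functions in $\dz{C_\phi}$ is itself governed by $\hsf_{\phi^2}$, leading to circularity. Using the truncations $\chi_{X_n}$, whose membership in $\dz{C_\phi}$ depends only on $\hsf_\phi$, together with conditional monotone convergence is what breaks this circle and removes the density hypothesis built into Lemma \ref{glowny}. A secondary point demanding care is the clean derivation of the adjoint-norm formula and the justification that localizing over $\phi^{-1}(\ascr)$ upgrades the global norm equality to the pointwise conditional identity.
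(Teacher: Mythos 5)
Your proof is correct and takes essentially the same route as the paper's: formal normality tested on the characteristic functions coming from Corollary \ref{xn2x} (with $m=1$), combined with Corollary \ref{adjoint} and Remark \ref{remE}, conditional and ordinary monotone convergence, and Lemma \ref{hf2}, yields $\esf(\hsf_\phi)=\hsf_\phi\circ\phi$ a.e.\ $[\mu|_{\phi^{-1}(\ascr)}]$, hence $\hsf_{\phi^2}=\hsf_\phi^2<\infty$ a.e.\ $[\mu]$, hence $\overline{\dz{C_\phi^2}}=L^2(\mu)$, after which Lemma \ref{glowny} finishes the argument. The only cosmetic difference is that you localize over $\phi^{-1}(\ascr)$ to obtain a pointwise conditional-expectation identity before letting $n\to\infty$, whereas the paper keeps the localization inside the test functions $\chi_{X_n\cap\varDelta}$ and passes to the limit in the resulting integral identities.
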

   \begin{proof}
It suffices to prove the ``if'' part. Suppose $C_\phi$
is formally normal. Let $\{X_n\}_{n=1}^\infty
\subseteq \ascr$ be as in Corollary \ref{xn2x} (with
$m=1$). Take $\varDelta \in \ascr$. Since
$\{\chi_{X_n\cap \varDelta}\}_{n=1}^\infty \subseteq
\dz{C_\phi}$, we get (see also Remark \ref{remE})
   \allowdisplaybreaks
   \begin{align*}
\int_{X_n\cap \varDelta} \hsf_\phi \D \mu &
\overset{\eqref{base}}= \|C_\phi (\chi_{X_n\cap
\varDelta})\|^2 = \|C_\phi^* (\chi_{X_n\cap
\varDelta})\|^2
   \\
&\overset{\eqref{adjoint2}}= \int_X \hsf_\phi^2 \cdot
|V^{-1}\esf(\chi_{X_n\cap \varDelta})|^2 \D\mu
   \\
&\hspace{1ex} = \int_X (\hsf_\phi \circ
\phi)(\esf(\chi_{X_n\cap \varDelta}))^2 \D\mu, \quad n
\in \nbb.
   \end{align*}
Using \eqref{CE-2} and Lebesgue's monotone convergence
theorem, we obtain
   \begin{align*}
\int_{\varDelta} \hsf_\phi \D \mu = \int_X (\hsf_\phi
\circ \phi)(\esf(\chi_{\varDelta}))^2 \D\mu, \quad
\varDelta \in \ascr,
   \end{align*}
which yields
   \begin{align*}
\int_{\phi^{-1}(\varDelta)} \hsf_\phi \D \mu =
\int_{\phi^{-1}(\varDelta)} \hsf_\phi \circ \phi
\D\mu, \quad \varDelta \in \ascr.
   \end{align*}
This in turn implies that $\esf(\hsf_\phi)=\hsf_\phi
\circ \phi$ a.e.\ $[\mu|_{\phi^{-1}(\ascr)}]$. By
Lemma \ref{hf2}, $\hsf_{\phi^{2}} = \hsf_{\phi}^2$
a.e.\ $[\mu]$. Since $\hsf_\phi < \infty$ a.e.\
$[\mu]$, we see that $\hsf_\phi + \hsf_{\phi^{2}} <
\infty$ a.e.\ $[\mu]$. Using Corollary
\ref{pr1}\,(ii), we get
$\overline{\dz{C_\phi^2}}=L^2(\mu)$. Applying Lemma
\ref{glowny} completes the proof.
   \end{proof}
   \begin{rem}
Using an unpublished result from \cite{StSz-b} (based
on a model for unbounded quasinormal operators), we
can also prove Theorem \ref{fn=n} as follows. Suppose
$C_\phi$ is formally normal. Then, by the polarization
formula, we have
   \allowdisplaybreaks
   \begin{align}   \notag
\int_X f \bar g \hsf_\phi \D \mu & = \is{C_\phi
f}{C_\phi g} = \is{C_\phi^* f}{C_\phi^* g}
   \\   \notag
& \hspace{-1ex}\overset{\eqref{adjoint2}}= \int_X
\hsf_\phi^2 \big(V^{-1}\esf(f)\big)
\big(\overline{V^{-1}\esf(g)}\big) \D\mu
   \\            \label{mordega}
&= \int_X (\hsf_\phi \circ \phi) \, \esf(f) \,
\overline{\esf(g)} \D\mu, \quad f,g\in \dz{C_\phi}.
   \end{align}
By Propositions \ref{clos} and \ref{nullspace}, and
Corollary \ref{hipnul}, we can assume that $0 <
\hsf_\phi(x) < \infty$ for all $x\in X$. Let
$\{X_n\}_{n=1}^\infty \subseteq \ascr$ be as in
Corollary \ref{xn2x} (with $m=1$). Set $Y_n=\{x \in
X_n\colon \hsf_\phi(x) \Ge 1/n\}$ for $n\in \nbb$.
Clearly, $Y_n \nearrow X$ as $n\to \infty$. Take
$\varDelta \in \ascr$. Since
$\{\chi_{Y_n}\}_{n=1}^\infty, \{\hsf_\phi^{-1}\cdot
\chi_{Y_n \cap \varDelta}\}_{n=1}^\infty \subseteq
\dz{C_\phi}$, we can substitute
$f=\hsf_\phi^{-1}\cdot\chi_{Y_n \cap \varDelta}$ and
$g=\chi_{Y_n}$ into \eqref{mordega}. What we get is
   \allowdisplaybreaks
   \begin{align*}
\mu(Y_n \cap \varDelta)&= \int_X (\hsf_\phi \circ
\phi) \, \esf(\hsf_\phi^{-1}\cdot\chi_{Y_n \cap
\varDelta}) \, \esf(\chi_{Y_n}) \D\mu
   \\
&\hspace{-1.1ex}\overset{\eqref{CE-3}}= \int_{Y_n \cap
\varDelta} \frac{\hsf_\phi \circ \phi}{\hsf_\phi} \,
\esf(\chi_{Y_n}) \D\mu, \quad n \in \nbb.
   \end{align*}
Using \eqref{CE-2} and Lebesgue's monotone convergence
theorem, we obtain
   \begin{align*}
\mu(\varDelta) = \int_{\varDelta} \frac{\hsf_\phi
\circ \phi}{\hsf_\phi} \D\mu, \quad \varDelta \in
\ascr,
   \end{align*}
which implies that $\hsf_\phi \circ \phi = \hsf_\phi$
a.e.\ $[\mu]$. By Proposition \ref{quasi}, $C_\phi$ is
quasinormal. Since quasinormal formally normal
operators are normal (cf.\ \cite{StSz-b}), the proof
is complete.
   \end{rem}
   \section{\label{gsmss}Generating Stieltjes moment sequences}
We begin by proving two lemmas which are main tools in
the proof of Theorem \ref{gsms} below.
   \begin{lem} \label{l2}
Suppose $\phi$ is a nonsingular transformation of $X$
and $\{\ee_n\}_{n=1}^\infty$ is a sequence of subsets
of $L^2(\mu)$ satisfying the following three
conditions{\em :}
   \begin{enumerate}
   \item[(i)] $\ee_n$ fulfils \eqref{num3},
$\ee_n \subseteq \dz{C_\phi^n}$ and
$\overline{\ee_n}=L^2(\mu)$ for all $n\in \nbb$,
   \item[(ii)]
$\big[\|C_\phi^{i+j}f\|^2\big]_{i,j=0}^n \Ge 0$ for
all $f\in \ee_{2n}$ and $n\in \nbb$,
   \item[(iii)]
$\big[\|C_\phi^{i+j+1}f\|^2\big]_{i,j=0}^n \Ge 0$ for
all $f\in \ee_{2n+1}$ and $n\in \nbb$.
   \end{enumerate}
Then the following three assertions hold{\em :}
   \begin{enumerate}
   \item[(a)] $\{\hsf_{\phi^n}(x)\}_{n=0}^\infty$ is a
Stieltjes moment sequence for $\mu$-a.e.\ $x \in X$,
   \item[(b)] $C_\phi^n=C_{\phi^n}$ for every
$n\in \nbb$,
   \item[(c)] $\dzn{C_\phi}$ is a core for
$C_\phi^n$ for every $n\in \zbb_+$.
   \end{enumerate}
   \end{lem}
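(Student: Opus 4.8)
The plan is to convert the matrix inequalities in (ii) and (iii) into pointwise positive definiteness of the sequence $\{\hsf_{\phi^n}(x)\}_{n=0}^\infty$, deduce (a) from the Stieltjes theorem \eqref{Sti}, and then read off (b) and (c) from the resulting moment structure together with Corollary \ref{pr1} and Theorem \ref{Mittag}. Two preliminary remarks organise everything. First, since each $\ee_n \subseteq \dz{C_\phi^n}$ is dense in $L^2(\mu)$, every power $C_\phi^n$ is densely defined, so Corollary \ref{pr1}\,(ii) yields $\hsf_{\phi^j} < \infty$ a.e.\ $[\mu]$ for every $j\in\nbb$ (recall also $\hsf_{\phi^0}=1$ a.e.). Second, by \eqref{base} applied to $\phi^m$ and the inclusion $C_\phi^m \subseteq C_{\phi^m}$ from \eqref{prnst}, every $f\in\dz{C_\phi^m}$ satisfies $\|C_\phi^m f\|^2 = \int_X |f|^2 \hsf_{\phi^m}\D\mu$. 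Consequently, for $f\in\ee_{2n}$ and scalars $\alpha_0,\dots,\alpha_n$,
\[
\sum_{i,j=0}^n \alpha_i\bar\alpha_j\,\|C_\phi^{i+j}f\|^2 = \int_X |f|^2 \Big(\sum_{i,j=0}^n \alpha_i\bar\alpha_j\,\hsf_{\phi^{i+j}}\Big)\D\mu ,
\]
with the analogous identity for (iii) involving $\hsf_{\phi^{i+j+1}}$.

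To prove (a) I would fix $n\in\nbb$ and a tuple $\alpha_0,\dots,\alpha_n$ drawn from the countable set $\qbb+\I\qbb$, and put $g=\sum_{i,j=0}^n \alpha_i\bar\alpha_j\,\hsf_{\phi^{i+j}}$, a finite $[\mu]$-a.e.\ function. Condition (ii) together with the identity above says $\int_X |f|^2 g\D\mu \Ge 0$ for all $f\in\ee_{2n}$, and the crux is to upgrade this to $g\Ge 0$ a.e.\ $[\mu]$. This is exactly where the hypothesis that $\ee_{2n}$ fulfils \eqref{num3} and is dense is used. Arguing by contradiction, if $\mu(\{g<0\})>0$ then, by $\sigma$-finiteness, there is $\varDelta\in\ascr$ with $0<\mu(\varDelta)<\infty$ and $g\Le-\varepsilon$ on $\varDelta$ for some $\varepsilon>0$; density supplies $f\in\ee_{2n}$ with $\int_\varDelta |f|^2\D\mu>0$, and \eqref{num3} permits passing to $\chi_\varDelta f\in\ee_{2n}$, whence
\[
0 \Le \int_X |\chi_\varDelta f|^2 g\D\mu \Le -\varepsilon\int_\varDelta |f|^2\D\mu < 0 ,
\]
a contradiction. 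Intersecting the resulting $\mu$-null exceptional sets over all $n$ and all rational tuples, and invoking continuity of the quadratic form in the $\alpha_i$, I obtain $[\hsf_{\phi^{i+j}}(x)]_{i,j=0}^n\Ge 0$ for every $n$ and a.e.\ $x$; the same reasoning applied to (iii) gives $[\hsf_{\phi^{i+j+1}}(x)]_{i,j=0}^n\Ge 0$. By \eqref{Sti}, $\{\hsf_{\phi^n}(x)\}_{n=0}^\infty$ is a Stieltjes moment sequence for a.e.\ $x$, which is (a).

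For (b) I would use that the representing measure $\rho_x$ of $\{\hsf_{\phi^n}(x)\}_{n=0}^\infty$ is a probability measure, since $\hsf_{\phi^0}=1$ a.e.\ $[\mu]$. Jensen's inequality for the convex map $t\mapsto t^{n/k}$ then gives, for a.e.\ $x$,
\[
\hsf_{\phi^k}(x) \Le \hsf_{\phi^n}(x)^{k/n} \Le 1 + \hsf_{\phi^n}(x), \qquad 1\Le k\Le n .
\]
Thus the criterion of Corollary \ref{pr1}\,(iii) holds with $c=1$, and therefore $C_\phi^n=C_{\phi^n}$ for every $n\in\nbb$, which is (b).

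Finally, (c) is essentially free: the density of $\dz{C_\phi^n}$ in $L^2(\mu)$ for every $n$, noted at the outset, is condition (i) of Theorem \ref{Mittag}, and its equivalent condition (iii) is precisely the assertion that $\dzn{C_\phi}$ is a core for $C_\phi^n$ for every $n\in\zbb_+$. I expect the localization argument in (a)---passing from $\int_X |f|^2 g\D\mu\Ge 0$ on a dense set to the pointwise inequality $g\Ge 0$---to be the only genuinely delicate step, since it is the one consuming both the density of $\ee_{2n}$ and the structural hypothesis \eqref{num3}; once the pointwise Stieltjes property is secured, (b) and (c) are short.
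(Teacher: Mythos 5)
Your proposal is correct and follows essentially the same route as the paper: pointwise positive definiteness of $\{\hsf_{\phi^n}(x)\}_n$ via a density-plus-localization argument over rational tuples, the Stieltjes theorem \eqref{Sti} for (a), the representing probability measures to dominate $\hsf_{\phi^k}$ by $1+\hsf_{\phi^n}$ for (b), and Theorem \ref{Mittag} for (c). The only (harmless) deviations are that you inline the localization step that the paper delegates to Corollary \ref{l1.5c}, and in (b) you obtain $\hsf_{\phi^k}\Le 1+\hsf_{\phi^n}$ via Jensen's inequality where the paper splits $\int_{\rbb_+}\sum_{j=0}^n s^j\D\mu_x(s)$ at $s=1$ to get the slightly weaker but equally sufficient bound $\sum_{j=0}^n\hsf_{\phi^j}\Le(n+1)(1+\hsf_{\phi^n})$.
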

   \begin{proof}
(a) By (i) and Corollary \ref{pr1}\,(ii), there is no
loss of generality in assuming that $0\Le
\hsf_{\phi^n} (x) < \infty$ for all $x \in X$ and
$n\in \zbb_+$. Using \eqref{base}, we obtain
   \begin{align} \label{num6}
\int_X\Big|\sum_{i,j=0}^n \alpha_i\bar\alpha_j
\hsf_{\phi^{i+j}}\Big||f|^2 \D \mu < \infty, \quad f
\in \dz{C_\phi^{2n}}, \, \{\alpha_i\}_{i=0}^n
\subseteq \cbb, \, n\in \zbb_+.
   \end{align}
If $\{\alpha_i\}_{i=0}^n \subseteq \cbb$, then by (i)
and (ii) we have
   \begin{align*}
0 \Le \sum_{i,j=0}^n \|C_\phi^{i+j} f\|^2 \alpha_i
\bar\alpha_j = \int_X \Big(\sum_{i,j=0}^n
\alpha_i\bar\alpha_j \hsf_{\phi^{i+j}}\Big)|f|^2
\D\mu, \quad f \in \ee_{2n}, \, n\in \nbb.
   \end{align*}
Combining (i), \eqref{num6} and Corollary \ref{l1.5c}
(with $\ee=\ee_{2n}$), we see that
   \begin{gather*}
\text{$\sum_{i,j=0}^n \alpha_i\bar\alpha_j
\hsf_{\phi^{i+j}} \Ge 0$ a.e.\ $[\mu]$ for all
$n\in\nbb$ and $\{\alpha_i\}_{i=0}^n \subseteq \cbb$.}
   \end{gather*}
   Let $Q$ be a countable dense subset of $\cbb$. Then
there exists a set $\varDelta_0 \in \ascr$ such that
$\mu(X\setminus \varDelta_0)=0$ and $\sum_{i,j=0}^n
\alpha_i\bar\alpha_j \hsf_{\phi^{i+j}}(x) \Ge 0$ for
all $n\in\nbb$, $\{\alpha_i\}_{i=0}^n \subseteq Q$ and
$x \in \varDelta_0$. As $Q$ is dense in $\cbb$, we
conclude that $[\hsf_{\phi^{i+j}}(x)]_{i,j=0}^n \Ge 0$
for all $n\in \nbb$ and $x \in \varDelta_0$. Using
(iii) and applying a similar reasoning as above, we
infer that there exists a set $\varDelta_1 \in \ascr$
such that $\mu(X\setminus \varDelta_1)=0$ and
$[\hsf_{\phi^{i+j+1}}(x)]_{i,j=0}^n \Ge 0$ for all $n
\in \nbb$ and $x \in \varDelta_1$. Employing
\eqref{Sti} yields (a).

(b) By (a), there exists $\varDelta \in \ascr$ such
that $\mu(X \setminus \varDelta)=0$, $\hsf_{\phi^0}(x)
= 1$ and $\{\hsf_{\phi^n}(x)\}_{n=0}^\infty$ is a
Stieltjes moment sequence for every $x\in \varDelta$.
Hence, for every $x\in \varDelta$ there exists a Borel
probability measure $\mu_x$ on $\rbb_+$ such that
$\hsf_{\phi^n}(x) = \int_{\rbb_+} s^n \D\mu_x(s)$ for
all $n \in \zbb_+$. This yields
   \allowdisplaybreaks
   \begin{align*}
\bigg(\sum_{j=0}^n \hsf_{\phi^j}\bigg)(x) & =
\int_{\rbb_+} \bigg(\sum_{j=0}^n s^j\bigg) \D \mu_x(s)
   \\
& = \int_{[0,1)} \bigg(\sum_{j=0}^n s^j\bigg) \D
\mu_x(s) + \int_{[1,\infty)} \bigg(\sum_{j=0}^n
s^j\bigg) \D \mu_x(s)
   \\
& \Le (n+1) \int_{[0,1)} 1 \D\mu_x(s) + (n+1)
\int_{[1,\infty)} s^n \D\mu_x(s)
   \\
& \Le (n+1) (1 + \hsf_{\phi^n}) (x), \quad x \in
\varDelta, \, n \in \nbb.
   \end{align*}
Hence the domains of $C_{\phi}^n$ and $C_{\phi^n}$
coincide for all $n\in \nbb$. By \eqref{prnst}, this
gives (b).

(c) Apply (i) and Theorem \ref{Mittag}. This completes
the proof.
   \end{proof}
   \begin{lem} \label{l2-pd}
Suppose that $\phi$ is a nonsingular transformation of
$X$ satisfying the following two conditions{\em :}
   \begin{enumerate}
   \item[(i)] $\dz{C_\phi^n}$ is dense in $L^2(\mu)$
for every $n\in \nbb$,
   \item[(ii)]
$\big[\|C_\phi^{i+j}f\|^2\big]_{i,j=0}^n \Ge 0$ for
all $f\in \dz{C_\phi^{2n}}$ and $n\in \nbb$.
   \end{enumerate}
Then the assertions {\em (a)}, {\em (b)} and {\em (c)}
of Lemma {\em \ref{l2}} hold.
   \end{lem}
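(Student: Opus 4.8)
The plan is to deduce the three assertions directly from Lemma~\ref{l2} by specializing its sequence of subsets to the full domains, namely $\ee_n := \dz{C_\phi^n}$ for every $n\in\nbb$. Under this choice, hypotheses~(i) and~(ii) of Lemma~\ref{l2-pd} furnish almost everything Lemma~\ref{l2} demands; the one clause that is not handed to us is the ``shifted'' positive-definiteness in condition~(iii) of Lemma~\ref{l2}, and producing it is the real content of the argument.

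First I would dispatch the bookkeeping. By~\eqref{3}, $\dz{C_\phi^n}=L^2\big((\sum_{j=0}^n \hsf_{\phi^j})\D\mu\big)$ is itself an $L^2$-space, so the structural requirement~\eqref{num3} is met; the inclusion $\ee_n\subseteq\dz{C_\phi^n}$ is an equality, and the density $\overline{\ee_n}=L^2(\mu)$ is exactly hypothesis~(i). Condition~(ii) of Lemma~\ref{l2} is hypothesis~(ii) verbatim. The heart of the matter is therefore condition~(iii). Fix $n\in\nbb$ and $f\in\dz{C_\phi^{2n+1}}$, and put $g:=C_\phi f$. A routine induction on $k$ shows that, for any operator $A$ and any $k\Ge 1$, membership $f\in\dz{A^k}$ forces $Af\in\dz{A^{k-1}}$ together with $A^{k-1}(Af)=A^k f$; applied to $A=C_\phi$ this yields $g\in\dz{C_\phi^{2n}}$ and $C_\phi^{i+j}(C_\phi f)=C_\phi^{i+j+1}f$ whenever $0\Le i+j\Le 2n$. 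Consequently
\[
\big[\|C_\phi^{i+j+1}f\|^2\big]_{i,j=0}^n
= \big[\|C_\phi^{i+j}g\|^2\big]_{i,j=0}^n \Ge 0,
\]
the nonnegativity being hypothesis~(ii) applied to $g\in\dz{C_\phi^{2n}}$. This establishes condition~(iii) of Lemma~\ref{l2}.

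With all of conditions~(i)--(iii) of Lemma~\ref{l2} verified for $\ee_n=\dz{C_\phi^n}$, I would simply invoke that lemma to obtain assertions~(a), (b) and~(c). I expect the only delicate point to be the domain bookkeeping inside the substitution $g=C_\phi f$: one must be sure that applying $C_\phi$ once lowers the admissible power by exactly one, so that $g$ lands in $\dz{C_\phi^{2n}}$ and hypothesis~(ii) genuinely applies to it. Once this elementary fact about operator powers is recorded, the passage from the even to the odd positive-definiteness, and hence the whole reduction, is immediate.
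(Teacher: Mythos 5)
Your proposal is correct and is essentially the paper's own proof: the paper likewise sets $\ee_n=\dz{C_\phi^n}$, notes that \eqref{3} gives \eqref{num3}, obtains condition (iii) of Lemma \ref{l2} by substituting $C_\phi f$ for $f$ in hypothesis (ii), and then invokes Lemma \ref{l2}. Your only addition is spelling out the elementary domain fact that $f\in\dz{C_\phi^{2n+1}}$ implies $C_\phi f\in\dz{C_\phi^{2n}}$, which the paper leaves implicit.
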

   \begin{proof}
Set $\ee_n=\dz{C_\phi^n}$ for $n\in \nbb$. According
to \eqref{3}, each $\ee_n$ satisfies \eqref{num3}.
Substituting $C_\phi f$ for $f$ in (ii) implies that
the hypothesis (iii) of Lemma \ref{l2} is satisfied.
Applying Lemma \ref{l2} completes the proof.
   \end{proof}
   \begin{cor}
If $C_\phi$ is subnormal and
$\overline{\dz{C_\phi^n}}=L^2(\mu)$ for all $n \in
\nbb$, then the assertions {\em (a)}, {\em (b)} and
{\em (c)} of Lemma {\em \ref{l2}} hold.
   \end{cor}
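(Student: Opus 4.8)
The plan is to reduce the corollary to a direct application of Lemma \ref{l2-pd}. That lemma has two hypotheses: (i) $\dz{C_\phi^n}$ is dense in $L^2(\mu)$ for every $n\in \nbb$, and (ii) $\big[\|C_\phi^{i+j}f\|^2\big]_{i,j=0}^n \Ge 0$ for all $f\in \dz{C_\phi^{2n}}$ and $n\in \nbb$. Once both are verified, Lemma \ref{l2-pd} yields exactly the assertions (a), (b) and (c) of Lemma \ref{l2}, which is all that is claimed. So the whole proof consists in checking that the present hypotheses supply (i) and (ii).

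First I would note that hypothesis (i) is nothing but the standing assumption $\overline{\dz{C_\phi^n}}=L^2(\mu)$ for all $n\in \nbb$, so there is nothing to prove there. For hypothesis (ii), I would invoke subnormality through Proposition \ref{nth}. A subnormal operator is densely defined by definition, and $C_\phi$ is closed by Proposition \ref{clos}, so Proposition \ref{nth}\,(i) applies with $S=C_\phi$ and gives $\big[\|C_\phi^{i+j}f\|^2\big]_{i,j=0}^n \Ge 0$ for all $f\in \dz{C_\phi^{2n}}$ and $n\in \zbb_+$. Restricting $n$ to $\nbb$ is exactly condition (ii).

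With (i) and (ii) established, applying Lemma \ref{l2-pd} finishes the argument. I expect no real obstacle here: the entire content is the bookkeeping of hypotheses, and the single point worth a moment's attention is recognizing that subnormality, via Proposition \ref{nth}\,(i), is precisely what produces the nonnegativity of the Hankel-type matrices $\big[\|C_\phi^{i+j}f\|^2\big]_{i,j=0}^n$ demanded by Lemma \ref{l2-pd}.
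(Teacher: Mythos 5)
Your proposal is correct and follows exactly the paper's route: the paper's proof is literally ``Apply Proposition \ref{nth} and Lemma \ref{l2-pd}'', and you have simply spelled out the bookkeeping (hypothesis (i) of Lemma \ref{l2-pd} is the standing density assumption, hypothesis (ii) comes from Proposition \ref{nth}\,(i) applied to the subnormal operator $C_\phi$). The aside about closedness via Proposition \ref{clos} is harmless but unnecessary, since Proposition \ref{nth} requires only subnormality.
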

   \begin{proof}
Apply Proposition \ref{nth} and Lemma \ref{l2-pd}.
   \end{proof}
The following theorem completely characterizes
composition operators that generate Stieltjes moment
sequences. It should be compared with Lambert's
characterizations of bounded subnormal composition
operators (cf.\ \cite{lam1}).
   \begin{thm} \label{gsms}
If $\phi$ is a nonsingular transformation of $X$, then
the following conditions are equivalent\/{\em :}
   \begin{enumerate}
   \item[(i)] $C_\phi$ generates  Stieltjes moment
sequences,
   \item[(ii)] $\{\hsf_{\phi^n}(x)\}_{n=0}^\infty$ is a
Stieltjes moment sequence for $\mu$-a.e.\ $x \in X$,
   \item[(iii)] $\overline{\dz{C_\phi^k}} = L^2(\mu)$
for all $k\in \nbb$, and
$\{\mu(\phi^{-n}(\varDelta))\}_{n=0}^\infty$ is a
Stieltjes moment sequence for every $\varDelta \in
\ascr$ such that $\mu(\phi^{-k}(\varDelta)) < \infty$
for all $k \in \zbb_+$,
   \item[(iv)] $\hsf_{\phi^n} <
\infty$ a.e.\ $[\mu]$ for all $n\in \nbb$ and $L(p)\Ge
0$ a.e.\ $[\mu]$ whenever $p(t) \Ge 0$ for all $t \in
\rbb_+$, where $L\colon \cbb[t] \to {\EuScript M}$ is
a linear mapping determined by\footnote{\;To make the
definition of $L$ correct we have to modify
$\hsf_{\phi^n}$ so that $0\Le\hsf_{\phi^n}(x)<\infty$
for all $x\in X$ and $n\in \zbb_+$.}
   \begin{align*}
L(t^n) = \hsf_{\phi^n}, \quad n \in \zbb_+;
   \end{align*}
here $\cbb[t]$ is the set of all complex polynomials
in one real variable $t$ and ${\EuScript M}$ is the
set of all $\ascr$-measurable complex functions on
$X$.
   \end{enumerate}
Moreover, if {\em (i)} holds, then
$C_\phi^n=C_{\phi^n}$ and $\dzn{C_\phi}$ is a core for
$C_\phi^n$ for all $n\in \zbb_+$.
   \end{thm}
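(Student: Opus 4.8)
The plan is to make condition (ii)---the pointwise a.e.\ statement that $\{\hsf_{\phi^n}(x)\}_{n=0}^\infty$ is a Stieltjes moment sequence---the hub of the argument, proving (ii)$\Leftrightarrow$(i), (ii)$\Leftrightarrow$(iii) and (ii)$\Leftrightarrow$(iv) separately, and reading off the ``moreover'' part as a by-product of the passage to (ii). The two workhorses throughout are the identities $\|C_\phi^n f\|^2 = \int_X |f|^2 \hsf_{\phi^n}\D\mu$ (from \eqref{base} and \eqref{3}) and $\mu(\phi^{-n}(\varDelta)) = \int_\varDelta \hsf_{\phi^n}\D\mu$ (from \eqref{1.5} applied to $\phi^n$), which convert every operator- or set-theoretic assertion into an integral against the densities $\hsf_{\phi^n}$, and the Stieltjes theorem \eqref{Sti}, which reduces ``being a Stieltjes moment sequence'' to two Hankel positivities.

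First I would prove (ii)$\Rightarrow$(i). Granting (ii), \eqref{Sti} gives for $\mu$-a.e.\ $x$ the positivities $[\hsf_{\phi^{i+j}}(x)]_{i,j=0}^n \Ge 0$ and $[\hsf_{\phi^{i+j+1}}(x)]_{i,j=0}^n \Ge 0$, so that for any $f\in\dzn{C_\phi}$ and scalars $\alpha_0,\dots,\alpha_n$,
\[
\sum_{i,j=0}^n \alpha_i\bar\alpha_j \|C_\phi^{i+j}f\|^2 = \int_X |f|^2 \Big(\sum_{i,j=0}^n \alpha_i\bar\alpha_j \hsf_{\phi^{i+j}}\Big)\D\mu \Ge 0,
\]
and likewise for the shifted matrix; hence $\{\|C_\phi^n f\|^2\}_{n=0}^\infty$ is Stieltjes by \eqref{Sti}. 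Finiteness of each $\hsf_{\phi^n}$ (a Stieltjes moment is finite) together with the domination $\sum_{j=0}^n \hsf_{\phi^j} \Le (n+1)(1+\hsf_{\phi^n})$---obtained by splitting the representing measure over $[0,1)$ and $[1,\infty)$ exactly as in the proof of Lemma \ref{l2}\,(b)---yields via Corollary \ref{pr1}\,(ii) that every $C_\phi^n$ is densely defined, whence $\dzn{C_\phi}$ is dense by Theorem \ref{Mittag}. This route deliberately avoids any measurable disintegration of the representing measures. For the converse (i)$\Rightarrow$(ii) I would apply Lemma \ref{l2} with the constant sequence $\ee_n:=\dzn{C_\phi}$: the inclusion $\dzn{C_\phi}\subseteq\dz{C_\phi^n}$ is trivial, density is part of (i), and $\dzn{C_\phi}$ meets the richness demanded by \eqref{num3} since it is closed under multiplication by characteristic functions (if $f\in\dzn{C_\phi}$ then $\int_\varDelta|f|^2\hsf_{\phi^n}\D\mu\Le\|C_\phi^n f\|^2<\infty$). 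The two Hankel hypotheses of Lemma \ref{l2} are precisely what \eqref{Sti} extracts from (i); conclusion (a) is (ii), while conclusions (b) and (c) give the ``moreover'' statement at once.

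Next I would link (ii) with (iii) and (iv). For (ii)$\Rightarrow$(iii), density of all powers follows as above, and for any $\varDelta$ with $\mu(\phi^{-k}(\varDelta))<\infty$ for every $k\in\zbb_+$ the forms $\sum_{i,j}\alpha_i\bar\alpha_j\mu(\phi^{-(i+j)}(\varDelta)) = \int_\varDelta\big(\sum_{i,j}\alpha_i\bar\alpha_j\hsf_{\phi^{i+j}}\big)\D\mu$ (and their shifts) are nonnegative, so \eqref{Sti} makes $\{\mu(\phi^{-n}(\varDelta))\}_{n=0}^\infty$ Stieltjes. The equivalence (ii)$\Leftrightarrow$(iv) is the pointwise version of the Riesz--Haviland reformulation of \eqref{Sti}: a real sequence is Stieltjes iff its Riesz functional is nonnegative on every polynomial nonnegative on $\rbb_+$, and one uses the representation of such a polynomial as (sum of squares)$\,+\,t\cdot$(sum of squares) to tie $L(p)\Ge0$ to the two Hankel positivities, together with a countable dense set of polynomials to interchange ``for $\mu$-a.e.\ $x$'' with ``for all $p$''.

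The hard part will be the two reverse implications (iii)$\Rightarrow$(ii) and (iv)$\Rightarrow$(ii), i.e.\ upgrading ``integrated'' or ``tested'' positivity back to pointwise a.e.\ Hankel positivity. The delicacy is that the hypotheses only supply information on sets of finite measure (those $\varDelta$ with $\mu(\phi^{-k}(\varDelta))<\infty$ for all $k$), so I would first reduce to such sets using the exhaustion $\{X_n\}$ of Corollary \ref{xn2x}, then apply the localization principle of Corollary \ref{l1.5c} on each piece to turn $\int_\varDelta(\cdots)\D\mu\Ge0$ into an a.e.\ pointwise inequality for the integrand, and finally collect a single $\mu$-null exceptional set by ranging the coefficients over a countable dense subset $Q\subseteq\cbb$---exactly the bookkeeping of the proof of Lemma \ref{l2}\,(a). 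Once the pointwise Hankel positivities hold a.e., \eqref{Sti} delivers (ii), completing all equivalences; the ``moreover'' assertions $C_\phi^n=C_{\phi^n}$ and ``$\dzn{C_\phi}$ is a core for $C_\phi^n$'' then follow from parts (b) and (c) of Lemma \ref{l2}.
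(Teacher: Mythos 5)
Most of your architecture coincides with the paper's and is correct: the paper also proves (i)$\Rightarrow$(ii) together with the ``moreover'' part by feeding $\ee_n=\dzn{C_\phi}$ into Lemma \ref{l2} (your verification of \eqref{num3} for $\dzn{C_\phi}$ is exactly what is needed), it proves (ii)$\Rightarrow$(i) by the same direct Hankel computation plus Corollary \ref{pr1}\,(ii) and Theorem \ref{Mittag}, and it handles (ii)$\Leftrightarrow$(iv) exactly as you describe, via the decomposition $p=t|q_1|^2+|q_2|^2$ in one direction and a countable dense set of coefficients plus \eqref{Sti} in the other. (One remark: (iv)$\Rightarrow$(ii) needs no localization at all, since the hypothesis $L(p)\Ge 0$ a.e.\ $[\mu]$ is already pointwise; the bookkeeping in your third paragraph is the entire argument, so (iv) does not belong in your list of implications requiring Corollary \ref{l1.5c}.)

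The genuine gap is your route from (iii) back to (ii). Hypothesis (iii) can be invoked only for sets $\varDelta$ with $\mu(\phi^{-k}(\varDelta))<\infty$ for \emph{every} $k\in\zbb_+$, i.e.\ sets with $\chi_\varDelta\in\dzn{C_\phi}$. The sets supplied by Corollary \ref{xn2x} are not of this kind: that corollary is applied with a fixed $m$ and bounds only $\sum_{j=1}^m\hsf_{\phi^j}$ on $X_n$, so it yields $\mu(\phi^{-k}(X_n))<\infty$ for $k\Le m$ and gives no control of $\mu(\phi^{-k}(X_n))$ for $k>m$; hence (iii) simply cannot be applied to these sets, not even to extract finitely many terms of the putative moment sequence. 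Similarly, Corollary \ref{l1.5c} requires a \emph{dense} set $\ee$ satisfying \eqref{num3} on which the positivity $\int_X h|f|^2\D\mu\Ge 0$ is known, whereas (iii) hands you this positivity only for $f=\chi_\varDelta$ with $\varDelta$ admissible --- a family which is not dense (it is not even a linear space). What is missing is a mechanism producing admissible sets in abundance, and this is the crux of the implication. The paper sidesteps the problem by proving (iii)$\Rightarrow$(i) instead: it applies (iii) only to the level sets $\varDelta_i$ of simple functions $u=\sum_i\alpha_i\chi_{\varDelta_i}\in\dzn{C_\phi}$ (since $\chi_{\varDelta_i}\Le\alpha_i^{-1}u$, each $\chi_{\varDelta_i}$ lies in $\dzn{C_\phi}$, so all preimages of $\varDelta_i$ automatically have finite measure), concludes that $\|C_\phi^n u\|^2=\sum_i\alpha_i^2\,\mu(\phi^{-n}(\varDelta_i))$ is a nonnegative combination of Stieltjes moment sequences, and then reaches arbitrary $f\in\dzn{C_\phi}$ by monotone approximation, using density of $\dzn{C_\phi}$ (Theorem \ref{Mittag}) and the closedness of the Stieltjes class under pointwise limits. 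Your localization plan can be repaired in the same spirit --- replace the sets of Corollary \ref{xn2x} by the sets $\{|f|\Ge 1/k\}$ with $f\in\dzn{C_\phi}$, and take for $\ee$ the simple functions dominated by elements of $\dzn{C_\phi}$ --- but that replacement is precisely the idea your proposal omits.
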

   \begin{proof}
(i)$\Rightarrow$(ii) Set $\ee_n=\dzn{C_\phi}$ for $n
\in \nbb$. By \eqref{Sti}, \eqref{3} and Lemma
\ref{l2}, we see that the condition (ii) and the
``moreover'' part hold.

(ii)$\Rightarrow$(i) Take $f \in \dzn{C_\phi}$, $n \in
\zbb_+$ and $\{\alpha_i\}_{i=0}^n\subseteq \cbb$.
Then, by \eqref{Sti}, we have
   \begin{align*}
\sum_{i,j=0}^n \alpha_i \overline{\alpha}_j
\|C_{\phi}^{i+j} f\|^2 \overset{\eqref{base}}= \int_X
\bigg(\sum_{i,j=0}^n \alpha_i \overline{\alpha}_j
\hsf_{\phi^{i+j}}(x)\bigg) |f(x)|^2 \D \mu(x) \Ge 0.
   \end{align*}
Applying the above to $C_\phi f$ in place of $f$, we
deduce that the sequences $\{\|C_\phi^k
f\|^2\}_{k=0}^\infty$ and $\{\|C_\phi^{k+1}
f\|^2\}_{k=0}^\infty$ are positive definite.
Therefore, by \eqref{Sti}, $\{\|C_\phi^k
f\|^2\}_{k=0}^\infty$ is a Stieltjes moment sequence.
It follows from Corollary \ref{pr1}(ii) and Theorem
\ref{Mittag} that $\dzn{C_\phi}$ is dense in
$L^2(\mu)$.

(i)$\Rightarrow$(iii) Evident (because $\chi_\varDelta
\in \dzn{C_\phi}$ for every $\varDelta$ as in (iii)).

(iii)$\Rightarrow$(i) By Theorem \ref{Mittag} the set
$\dzn{C_\phi}$ is dense in $L^2(\mu)$. Consider a
simple $\ascr$-measurable function $u =\sum_{i=1}^k
\alpha_i \chi_{\varDelta_i}$, where
$\{\alpha_i\}_{i=1}^k$ are positive real numbers and
$\{\varDelta_i\}_{i=1}^k$ are pairwise disjoint sets
in $\ascr$. Suppose that $u$ is in $\dzn{C_\phi}$.
Then, by the measure transport theorem,
$\{\chi_{\varDelta_i}\}_{i=1}^k \subseteq
\dzn{C_\phi}$ and
   \begin{align*}
\|C_\phi^n u\|^2 = \sum_{i,j=1}^k \alpha_i \alpha_j
\int_{\varDelta_i \cap \varDelta_j} \hsf_{\phi^n}
\D\mu = \sum_{i=1}^k \alpha_i^2 \int_{\varDelta_i}
\hsf_{\phi^n} \D\mu \overset{\eqref{base}}=
\sum_{i=1}^k \alpha_i^2 \mu(\phi^{-n}(\varDelta_i))
   \end{align*}
for all $n\in \zbb_+$. Hence, by (iii), we have
   \begin{align} \label{uSti}
   \begin{minipage}{65ex}
$\{\|C_\phi^n u\|^2\}_{n=0}^\infty$ is a Stieltjes
moment sequence for every simple nonnegative
$\ascr$-measurable function $u\in\dzn{C_\phi}$.
   \end{minipage}
   \end{align}
Now take $f \in \dzn{C_\phi}$. Then there exists a
sequence $\{u_n\}_{n=1}^\infty$ of simple
$\ascr$-measurable functions $u_n\colon X \to \rbb_+$
such that $u_n(x) \Le u_{n+1}(x) \Le |f(x)|$ and
$\lim_{k \to \infty} u_k(x) = |f(x)|$ for all $n\in
\nbb$ and $x \in X$. This implies that
$\{u_n\}_{n=1}^\infty \subseteq \dzn{C_\phi}$ and, by
Lebesgue's monotone convergence theorem,
   \begin{align*}
\|C_\phi^n f\|^2 = \int_X |f|^2 \hsf_{\phi^n} \D\mu =
\lim_{k\to \infty} \int_X u_k^2 \hsf_{\phi^n} \D\mu =
\lim_{k\to \infty} \|C_\phi^n u_k\|^2, \quad n \in
\zbb_+.
   \end{align*}
Since the class of Stieltjes moment sequences is
closed under the operation of taking pointwise limits
(cf.\ \eqref{Sti}), we infer from \eqref{uSti} that
$\{\|C_\phi^n f\|^2\}_{n=0}^\infty$ is a Stieltjes
moment sequence.

(ii)$\Rightarrow$(iv) If $p\in \cbb[t]$ is such that
$p(t) \Ge 0$ for all $t \in \rbb_+$, then there exist
$q_1,q_2\in\cbb[t]$ such that $p(t)=t|q_1(t)|^2 +
|q_2(t)|^2$ for all $t \in \rbb$ (see \cite[Problem
45, p.\ 78]{P-G}). This fact combined with \eqref{Sti}
implies that $L(p) \Ge 0$ a.e.\ $[\mu]$.

(iv)$\Rightarrow$(ii) Let $Q$ be a countable dense
subset of $\cbb$. If $q\in\cbb[t]$ is a polynomial
with coefficients in $Q$, then the polynomials
$p_1:=|q|^2$ and $p_2:=t|q|^2$ are nonnegative on
$\rbb_+$. Hence $L(p_i) \Ge 0$ a.e.\ $[\mu]$ for
$i=1,2$. Since $Q$ is countable, this implies that
there exists $\varDelta \in \ascr$ such that
$\mu(X\setminus \varDelta)=0$,
   \begin{align} \label{haha}
0 \leq \hsf_{\phi^n}(x)<\infty, \sum_{i,j=0}^n
\alpha_i \overline{\alpha}_j \hsf_{\phi^{i+j}}(x) \Ge
0 \text{ and } \sum_{i,j=0}^n \alpha_i
\overline{\alpha}_j \hsf_{\phi^{i+j+1}}(x) \Ge 0
   \end{align}
for all $n \in \zbb_+$, $\{\alpha_i\}_{i=0}^n
\subseteq Q$ and $x \in \varDelta$. As $Q$ is dense in
$\cbb$, we see that \eqref{haha} holds for all $n \in
\zbb_+$, $\{\alpha_i\}_{i=0}^n \subseteq \cbb$ and $x
\in \varDelta$. This and \eqref{Sti} complete the
proof.
   \end{proof}
   \begin{outc} \label{conclud}
We close the paper by pointing out that there exists a
composition operator generating Stieltjes moment
sequences which is not subnormal and even not
hyponormal. Such an operator can be constructed on the
basis of a weighted shift on a directed tree with one
branching vertex (cf.\ \cite[Section 4.3]{j-j-s0}). In
view of Theorem \ref{gsms}, any composition operator
$C_\phi$ which generates Stieltjes moment sequences,
in particular the aforementioned, satisfies the
conditions (ii), (iii) and (iv) of this theorem as
well as its ``moreover'' part (specifically,
$\dzn{C_\phi}$ is a core for $C_\phi^n$ for every
$n\in \zbb_+$, which is considerably more than is
required in Definition \ref{maindef}). Therefore, none
of the Lambert characterizations of subnormality of
bounded composition operators (cf.\ \cite{lam1}) is
valid in the unbounded case. It is worth mentioning
that the above example is built over the discrete
measure space. However, it can be immediately adapted
to the context of measures which are equivalent to the
Lebesgue measure on $[0,\infty)$ by applying
\cite[Theorem 2.7]{jab}.
   \end{outc}
   \appendix
   \section{}  \label{apms}
Here we gather some useful properties of $L^2$-spaces.
The first two lemmas seem to be folklore. For the
reader's convenience, we include their proofs.
   \begin{lem} \label{l1.11}
Let $(X, \ascr, \mu)$ be a measure space and let
$\rho_1, \rho_2$ be $\ascr$-measurable scalar
functions on $X$ such that $0 < \rho_i < \infty$ a.e.\
$[\mu]$ for $i=1,2$. Then $L^2(\rho_1\D\mu) \cap
L^2(\rho_2\D\mu)$ is dense\footnote{\;This makes sense
because the measures $\rho_1\D\mu$ and $\rho_2\D\mu$
are mutually absolutely continuous.} in $L^2(\rho_i
\D\mu)$ for $i=1,2$.
   \end{lem}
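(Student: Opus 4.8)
The plan is to prove density by a single truncation adapted to the ratio of the two weights. Fix $i\in\{1,2\}$; by the evident symmetry it suffices to treat $i=1$, so I aim to approximate an arbitrary $f\in L^2(\rho_1\D\mu)$ in the $L^2(\rho_1\D\mu)$-norm by members of $L^2(\rho_1\D\mu)\cap L^2(\rho_2\D\mu)$. Since $0<\rho_1<\infty$ and $0<\rho_2<\infty$ a.e.\ $[\mu]$, the quotient $\rho_2/\rho_1$ is finite a.e.\ $[\mu]$, so the sets
\[
X_n=\{x\in X\colon \rho_2(x)\Le n\,\rho_1(x)\}\in\ascr,\quad n\in\nbb,
\]
increase to $X$ up to a $\mu$-null set; equivalently, $\chi_{X\setminus X_n}\to 0$ a.e.\ $[\mu]$.

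Next I would set $f_n=\chi_{X_n}f$ and check that each $f_n$ lies in the intersection. Membership in $L^2(\rho_1\D\mu)$ is clear since $|f_n|\Le|f|$. For the second space, the defining inequality $\rho_2\Le n\rho_1$ on $X_n$ gives
\[
\int_X |f_n|^2\rho_2\,\D\mu=\int_{X_n}|f|^2\rho_2\,\D\mu\Le n\int_{X_n}|f|^2\rho_1\,\D\mu\Le n\int_X|f|^2\rho_1\,\D\mu<\infty,
\]
so $f_n\in L^2(\rho_1\D\mu)\cap L^2(\rho_2\D\mu)$. Finally, the approximation is controlled by
\[
\int_X|f-f_n|^2\rho_1\,\D\mu=\int_{X\setminus X_n}|f|^2\rho_1\,\D\mu,
\]
and since $|f|^2\rho_1\in L^1(\mu)$ while $\chi_{X\setminus X_n}\to0$ a.e.\ $[\mu]$, Lebesgue's dominated convergence theorem forces the right-hand side to $0$. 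Hence $f_n\to f$ in $L^2(\rho_1\D\mu)$, proving density for $i=1$; interchanging the roles of $\rho_1$ and $\rho_2$ yields the case $i=2$.

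I do not expect a serious obstacle here, but the one point that genuinely needs care is that, because this lemma sits in the Appendix, $\mu$ is \emph{not} assumed $\sigma$-finite, so the usual device of truncating by sets of finite measure is unavailable. The resolution is precisely the choice of truncating by the weight ratio $\rho_2/\rho_1$ rather than by $\rho_1$ or by $\mu$: a single such cutoff simultaneously pushes $f_n$ into $L^2(\rho_2\D\mu)$ (via the bound $\rho_2\Le n\rho_1$ on $X_n$) while keeping $f_n$ close to $f$ in $L^2(\rho_1\D\mu)$. The only hypotheses used are the two-sided positivity and finiteness of the weights a.e.\ $[\mu]$, which guarantee both the measurability of the $X_n$ and the almost-everywhere exhaustion $X_n\nearrow X$.
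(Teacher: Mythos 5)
Your proof is correct, and it takes a genuinely different (and more direct) route than the paper's. The paper first reduces to the ordered case via the identity $L^2(\rho_1\D\mu)\cap L^2(\rho_2\D\mu)=L^2((\rho_1+\rho_2)\D\mu)$, so that it may assume $0<\rho_2\Le\rho_1<\infty$; it then approximates a characteristic function $\chi_\varDelta$ with $\int_\varDelta\rho_2\D\mu<\infty$ by $\chi_{\varDelta_n}$, where $\varDelta_n=\{x\in\varDelta\colon \rho_1(x)\Le n \text{ and } 1/n\Le\rho_2(x)\}$, and finally invokes the density of simple functions in $L^2(\rho_2\D\mu)$ (Rudin, Theorem 3.13) to pass from indicators to arbitrary elements. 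You instead truncate an arbitrary $f\in L^2(\rho_1\D\mu)$ directly on the sets $X_n=\{\rho_2\Le n\rho_1\}$; this places $\chi_{X_n}f$ in the intersection in one step and lets dominated convergence finish the argument, avoiding both the reduction to $\rho_2\Le\rho_1$ and the appeal to the simple-function density theorem. The two truncations are close in spirit --- the paper's conditions $\rho_1\Le n$ and $1/n\Le\rho_2$ likewise amount to a bound on a ratio of weights --- but yours is the more economical and self-contained. Your remark that the argument must not use $\sigma$-finiteness is well taken: the lemma is stated for an arbitrary measure space, and both proofs respect this.
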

   \begin{proof}
Since $L^2(\rho_1\D\mu) \cap L^2(\rho_2\D\mu) =
L^2((\rho_1+\rho_2)\D\mu)$, we can assume that $0 <
\rho_2(x) \Le \rho_1(x) < \infty$ for all $x \in X$.
Take $\varDelta \in \ascr$ such that $\chi_\varDelta
\in L^2(\rho_2\D\mu)$. Set $\varDelta_n=\big\{x \in
\varDelta\colon \rho_1(x) \Le n \text{ and }
\frac{1}{n} \Le \rho_2(x)\big\}$ for $n\in \nbb$. Note
that $\{\chi_{\varDelta_n}\}_{n=1}^\infty \subseteq
L^2(\rho_1\D\mu)$. Since $\varDelta_n \nearrow
\varDelta$ as $n\to \infty$, we see that
$\{\chi_{\varDelta_n}\}_{n=1}^\infty$ converges to
$\chi_\varDelta$ in $L^2(\rho_2 \D\mu)$. Applying
\cite[Theorem 3.13]{Rud} completes the proof.
   \end{proof}
Note that Lemma \ref{l1.11} is no longer true if one
of the density functions $\rho_1$ and $\rho_2$ takes
the value $\infty$ on a set of positive measure $\mu$
(even if $\rho_2 \Le \rho_1$). Employing Lemma
\ref{l1.11} and the Radon-Nikodym theorem, we get the
following.
   \begin{cor} \label{zlem}
Let $(X, \ascr, \mu_1)$ and $(X, \ascr, \mu_2)$ be
$\sigma$-finite measure spaces. If the measures
$\mu_1$ and $\mu_2$ are mutually absolutely
continuous, then $L^2(\mu_1) \cap L^2(\mu_2)$ is dense
in $L^2(\mu_i)$ for $i=1,2$.
   \end{cor}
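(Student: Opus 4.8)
The plan is to reduce the statement to Lemma \ref{l1.11} by passing to a common dominating measure and invoking the Radon--Nikodym theorem. First I would set $\mu = \mu_1 + \mu_2$. Since $\mu_1$ and $\mu_2$ are $\sigma$-finite, so is $\mu$, and clearly $\mu_i \ll \mu$ for $i=1,2$. Hence the Radon--Nikodym theorem supplies $\ascr$-measurable functions $\rho_i \colon X \to [0,\infty)$ such that $\mu_i(\varDelta) = \int_\varDelta \rho_i \D\mu$ for all $\varDelta \in \ascr$; in particular $L^2(\mu_i) = L^2(\rho_i \D\mu)$ for $i=1,2$.

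The next step is to check that $\rho_1$ and $\rho_2$ meet the hypotheses of Lemma \ref{l1.11}, namely $0 < \rho_i < \infty$ a.e.\ $[\mu]$. Evaluating $\mu(\varDelta) = \mu_1(\varDelta) + \mu_2(\varDelta)$ on an arbitrary $\varDelta \in \ascr$ and using the uniqueness part of the Radon--Nikodym theorem gives $\rho_1 + \rho_2 = 1$ a.e.\ $[\mu]$, so both densities are dominated by $1$ and thus finite a.e.\ $[\mu]$. For positivity I would argue by contradiction: if $\rho_1 = 0$ on a set $A \in \ascr$ with $\mu(A) > 0$, then $\mu_1(A) = \int_A \rho_1 \D\mu = 0$, whence $\mu_2(A) = 0$ because $\mu_2 \ll \mu_1$, and therefore $\mu(A) = \mu_1(A) + \mu_2(A) = 0$, a contradiction. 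The symmetric argument, now using $\mu_1 \ll \mu_2$, yields $\rho_2 > 0$ a.e.\ $[\mu]$.

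With $0 < \rho_i < \infty$ a.e.\ $[\mu]$ in hand, Lemma \ref{l1.11} applies verbatim and shows that $L^2(\rho_1 \D\mu) \cap L^2(\rho_2 \D\mu)$ is dense in $L^2(\rho_i \D\mu)$ for $i=1,2$. Rewriting $L^2(\rho_i \D\mu) = L^2(\mu_i)$ turns this into the assertion that $L^2(\mu_1) \cap L^2(\mu_2)$ is dense in $L^2(\mu_i)$, which is exactly the claim.

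The step I expect to require the most care is the positivity of the densities, since that is the only place where mutual absolute continuity genuinely enters; the finiteness and the identification $L^2(\rho_i\D\mu) = L^2(\mu_i)$ are routine consequences of the Radon--Nikodym identities. Once the hypotheses of Lemma \ref{l1.11} are secured, the conclusion is immediate.
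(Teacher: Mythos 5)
Your proof is correct and follows exactly the route the paper indicates (the paper only remarks that the corollary follows ``employing Lemma \ref{l1.11} and the Radon--Nikodym theorem''): you pass to a dominating measure, verify via mutual absolute continuity that the densities are positive and finite a.e., and invoke Lemma \ref{l1.11}. The choice of $\mu=\mu_1+\mu_2$ rather than, say, taking $\mu=\mu_1$ with $\rho_2=\D\mu_2/\D\mu_1$ is an inessential variant.
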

Corollary \ref{zlem} is no longer true if one of the
measures $\mu_1$ and $\mu_2$ is not $\sigma$-finite.
   \begin{lem}   \label{Landau}
Let $(X,\ascr,\mu)$ be a $\sigma$-finite measure space
and $\rho_1, \rho_2$ be $\ascr$-measur\-able scalar
functions on $X$ such that $0 < \rho_1 \Le \infty$
a.e.\ $[\mu]$ and $0 \Le \rho_2 \Le \infty$ a.e.\
$[\mu]$. Then the following two conditions are
equivalent\/{\em :}
   \begin{enumerate}
   \item[(i)] $\int_X|f|^2 \rho_2\D\mu < \infty$ for
every $\ascr$-measurable function $f\colon X\to \cbb$
such that $\int_X|f|^2 \rho_1\D\mu < \infty$,
   \item[(ii)]
there exists $c \in \rbb_+$ such that $\rho_2 \Le c
\rho_1$ a.e.\ $[\mu]$.
   \end{enumerate}
   \end{lem}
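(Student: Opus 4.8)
The implication (ii)$\Rightarrow$(i) is immediate, since $\rho_2 \Le c\rho_1$ a.e.\ $[\mu]$ gives $\int_X |f|^2\rho_2\D\mu \Le c\int_X|f|^2\rho_1\D\mu$ for every $\ascr$-measurable $f$. The content lies in (i)$\Rightarrow$(ii), and the plan is to turn the qualitative inclusion (i) into the quantitative majorization (ii) via the closed graph theorem. To this end I would put $H_i = L^2(\rho_i\D\mu)$ for $i=1,2$; these spaces are complete for an arbitrary underlying measure, so the possibility that $\rho_1$ or $\rho_2$ takes the value $\infty$ on a set of positive $\mu$-measure is harmless --- members of $H_i$ simply vanish a.e.\ $[\mu]$ on $\{\rho_i=\infty\}$. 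Since $0<\rho_1$ a.e.\ $[\mu]$, the $\rho_1\D\mu$-null sets are exactly the $\mu$-null sets, so $H_1$ carries the ordinary $\mu$-a.e.\ identification; and as $\rho_2\D\mu \ll \mu$, the assignment $f \mapsto f$ descends to a well-defined linear map $\iota\colon H_1 \to H_2$. Condition (i) says precisely that $\iota$ is everywhere defined on $H_1$.

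The decisive step --- and the one I expect to require the most care --- is to show that $\iota$ has closed graph, the delicate point being that $H_1$ and $H_2$ are quotients by different null ideals. Suppose $f_k \to f$ in $H_1$ and $\iota f_k \to g$ in $H_2$. Convergence in $H_1$ furnishes a subsequence converging to $f$ a.e.\ $[\mu]$, and a further subsequence then converges to $g$ a.e.\ $[\rho_2\D\mu]$; because $\mu$-a.e.\ convergence entails $\rho_2\D\mu$-a.e.\ convergence, we obtain $f=g$ in $H_2$, i.e.\ $\iota f=g$. The closed graph theorem now yields $C\in\rbb_+$ with
\[
\int_X|f|^2\rho_2\D\mu \Le C\int_X|f|^2\rho_1\D\mu,\qquad f\in H_1.
\]

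It remains to read off (ii) from this inequality. Let $X_0=\{x\colon\rho_1(x)<\infty\}$; off $X_0$ one has $\rho_1=\infty$ a.e.\ $[\mu]$, where (ii) holds trivially for any $c>0$. Using $\sigma$-finiteness of $\mu$ together with $\rho_1<\infty$ on $X_0$, I would exhaust $X_0$ by sets $X_0^{(m)}$ of finite $\mu$-measure on which $\rho_1$ is bounded, so that $\chi_\varDelta\in H_1$ for every $\ascr$-measurable $\varDelta\subseteq X_0^{(m)}$. Testing the displayed inequality against such indicators gives $\int_\varDelta\rho_2\D\mu\Le C\int_\varDelta\rho_1\D\mu$; taking $\varDelta=\{x\in X_0^{(m)}\colon\rho_2(x)>C\rho_1(x)\}$ and noting that the integrand $\rho_2-C\rho_1$ is strictly positive there forces $\mu(\varDelta)=0$. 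Hence $\rho_2\Le C\rho_1$ a.e.\ $[\mu]$ on each $X_0^{(m)}$, thus on $X_0$, and therefore on all of $X$ with $c=\max\{C,1\}$.
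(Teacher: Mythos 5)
Your proof is correct, and it takes a genuinely different route from the paper's. The paper proves (i)$\Rightarrow$(ii) by a chain of reductions: first it assumes WLOG that $\rho_1 < \infty$ a.e.\ $[\mu]$; then it rules out the possibility that $\rho_2 = \infty$ on a set of positive measure by a separate contradiction (using $\sigma$-finiteness to find $\varOmega$ with $0<\mu(\varOmega)<\infty$ on which $\rho_1$ is bounded and $\rho_2\equiv\infty$, so that $f=\chi_\varOmega$ violates (i)); finally it replaces $\rho_2$ by $\rho_2/\rho_1$ to normalize $\rho_1\equiv 1$ and invokes the Landau--Riesz summability theorem, cited from Brown--Pearcy, to conclude $\rho_2 \Le c$ a.e.\ $[\mu]$. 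You instead apply the closed graph theorem directly to the inclusion $L^2(\rho_1\D\mu)\to L^2(\rho_2\D\mu)$ and then recover the pointwise bound by testing indicators of finite-measure sets on which $\rho_1$ is bounded. Your handling of the delicate points is right: the hypothesis $\rho_1>0$ a.e.\ $[\mu]$ is exactly what identifies the null ideal of $\rho_1\D\mu$ with that of $\mu$ (making $\iota$ well defined and the two-subsequence closed-graph argument work via $\rho_2\D\mu\ll\mu$), and the final extraction of (ii), including the trivial treatment of $\{\rho_1=\infty\}$, is sound. What each approach buys: the paper's proof is shorter because the analytic content is outsourced to the cited classical theorem; yours is self-contained --- in effect you inline a proof of that theorem (which is itself classically established by just such a closed-graph or uniform-boundedness argument) --- and it absorbs the sets where $\rho_1$ or $\rho_2$ is infinite into the main argument rather than disposing of them by preliminary reductions.
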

   \begin{proof}
(i)$\Rightarrow$(ii) Without loss of generality we can
assume that $\rho_1 < \infty$ a.e.\ $[\mu]$. We can
also assume that $\rho_2 < \infty$ a.e.\ $[\mu]$
(indeed, otherwise, since $\rho_1 < \infty$ a.e.\
$[\mu]$ and $\mu$ is $\sigma$-finite, there exist
$\varOmega \in \ascr$ and $k \in \nbb$ such that
$\rho_1(x)\Le k$ and $\rho_2(x)=\infty$ for all $x\in
\varOmega$, and $0<\mu(\varOmega)<\infty$; hence
$\int_{\varOmega} \rho_1 \D\mu < \infty$ and
$\int_{\varOmega} \rho_2 \D\mu = \infty$, which is a
contradiction). Finally, replacing $\rho_2$ by
$\frac{\rho_2}{\rho_1}$ if necessary, we can assume
that $\rho_1(x) = 1$ for all $x\in X$. Now applying
the Landau-Riesz summability theorem (cf.\
\cite[Problem G, p.\ 398]{b-p}), we obtain (ii). The
implication (ii)$\Rightarrow$(i) is obvious.
   \end{proof}
   \begin{cor} \label{l1.12}
Let $(X,\ascr,\mu)$ be a $\sigma$-finite measure space
and $\rho_1, \rho_2$ be $\ascr$-measurable scalar
functions on $X$ such that $0 < \rho_i \Le \infty$
a.e.\ $[\mu]$ for $i=1,2$. Then $L^2(\rho_1\D\mu)
\subseteq L^2(\rho_2\D\mu)$ if and only if there
exists $c \in \rbb_+$ such that $\rho_2 \Le c \rho_1$
a.e.\ ~ $[\mu]$.
   \end{cor}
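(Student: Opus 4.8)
The plan is to deduce Corollary \ref{l1.12} directly from Lemma \ref{Landau}, of which it is essentially a restatement in the language of weighted $L^2$-spaces. First I would unravel the meaning of the inclusion $L^2(\rho_1\D\mu) \subseteq L^2(\rho_2\D\mu)$. Membership of an $\ascr$-measurable function $f\colon X \to \cbb$ in $L^2(\rho_i\D\mu)$ means exactly that $\int_X |f|^2 \rho_i \D\mu < \infty$ (the integral being well-defined, possibly $+\infty$, even where $\rho_i$ attains the value $\infty$). Hence the inclusion asserts precisely that every $\ascr$-measurable $f$ with $\int_X |f|^2 \rho_1 \D\mu < \infty$ also satisfies $\int_X |f|^2 \rho_2 \D\mu < \infty$, which is verbatim condition (i) of Lemma \ref{Landau}.

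Next I would check that the hypotheses of the corollary imply those of the lemma. The corollary assumes $0 < \rho_i \Le \infty$ a.e.\ $[\mu]$ for $i=1,2$; in particular $0 < \rho_1 \Le \infty$ a.e.\ $[\mu]$, and since $0 < \rho_2$ certainly gives $0 \Le \rho_2 \Le \infty$ a.e.\ $[\mu]$, both sign conditions required by Lemma \ref{Landau} are met. The space $X$ is $\sigma$-finite by assumption, so the lemma applies verbatim.

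Finally I would invoke the equivalence (i)$\Leftrightarrow$(ii) of Lemma \ref{Landau}. Condition (ii) there states exactly that there exists $c \in \rbb_+$ with $\rho_2 \Le c \rho_1$ a.e.\ $[\mu]$, so the equivalence yields both implications of the corollary simultaneously: the inclusion $L^2(\rho_1\D\mu) \subseteq L^2(\rho_2\D\mu)$ holds if and only if such a $c$ exists.

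I do not expect a genuine obstacle here, since all the substantive work—the Landau--Riesz summability argument producing the constant $c$ in the nontrivial direction—is already packaged inside Lemma \ref{Landau}, which I may assume. The only points requiring a little care are the correct interpretation of the weighted integrals when the densities take the value $\infty$, and the routine verification that the corollary's strict positivity of $\rho_2$ is a special case of the weaker nonnegativity hypothesis on $\rho_2$ in the lemma; neither presents any difficulty.
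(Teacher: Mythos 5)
Your proof is correct and is precisely the paper's intended argument: the paper states Corollary \ref{l1.12} immediately after Lemma \ref{Landau} with no separate proof, since the inclusion $L^2(\rho_1\D\mu) \subseteq L^2(\rho_2\D\mu)$ is a verbatim restatement of condition (i) of that lemma, and the corollary's hypothesis $0<\rho_2\Le\infty$ a.e.\ $[\mu]$ is a special case of the lemma's $0\Le\rho_2\Le\infty$ a.e.\ $[\mu]$. Your added remark that strict positivity of $\rho_2$ is what makes the inclusion of $L^2$-spaces meaningful (the measures $\rho_1\D\mu$, $\rho_2\D\mu$ then have the same null sets as $\mu$) is a worthwhile observation consistent with the paper's footnote to Lemma \ref{l1.11}.
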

The implication (i)$\Rightarrow$(ii) of Lemma
\ref{Landau} is not true if we drop the assumption
that $\rho_1 > 0$ a.e.\ $[\mu]$. Corollary \ref{l1.12}
is no longer true if $\mu$ is not $\sigma$-finite
(e.g., $X=\nbb$, $\ascr=2^X$, $\mu(\{1\})=1$,
$\mu(\{i\}) = \infty$ for $i \Ge 2$, $\rho_1\equiv 1$
and $\rho_2(n)=n$ for $n\in X$).

The following lemma generalizes \cite[Lemma 2.1]{jab}.
   \begin{lem} \label{l1.5+}
Let $(X,\ascr,\mu)$ be a $\sigma$-finite measure
space, $\ee$ be a dense subset of $L^2(\mu)$ and $h
\colon X \to \cbb$ be an $\ascr$-measurable function
such that
   \begin{align} \label{num5+}
\text{$\int_\varDelta |h||f|^2\D \mu < \infty$ and
$\int_\varDelta h|f|^2\D \mu \Ge 0$ for all $f \in
\ee$ and $\varDelta \in \ascr_*$,}
   \end{align}
where $\ascr_* = \{\varDelta \in \ascr \colon
\mu(\varDelta) < \infty\}$. Then $h \Ge 0$ a.e.\
$[\mu]$.
   \end{lem}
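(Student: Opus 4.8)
The plan is to reduce the global statement to a pointwise one by first fixing a single test function. First I would fix $f\in\ee$ and put $g=h|f|^2$. By \eqref{num5+} the function $g$ is integrable over every set of finite measure, and $\int_\varDelta g\D\mu\Ge 0$ for all $\varDelta\in\ascr_*$; in particular each such integral is a \emph{nonnegative real} number. Writing $g=g_1+\I g_2$ with $g_1,g_2$ real, this forces $\int_\varDelta g_2\D\mu=0$ and $\int_\varDelta g_1\D\mu\Ge 0$ for every $\varDelta\in\ascr_*$. Choosing (by $\sigma$-finiteness) an exhaustion $X_m\nearrow X$ with $\mu(X_m)<\infty$ and testing the first identity against $\varDelta=\{g_2>0\}\cap X_m$ and $\varDelta=\{g_2<0\}\cap X_m$ yields $g_2=0$ a.e.\ $[\mu]$, while testing the inequality against $\varDelta=\{g_1<0\}\cap X_m$ yields $g_1\Ge 0$ a.e.\ $[\mu]$. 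Hence $g=h|f|^2\Ge 0$ a.e.\ $[\mu]$.

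From this I read off the geometric content: since $|f|^2>0$ exactly on $\{f\neq 0\}$, the conclusion $h|f|^2\Ge 0$ a.e.\ says precisely that $h\Ge 0$ a.e.\ on $\{f\neq 0\}$. Equivalently, setting $N:=\{x\in X\colon h(x)\notin[0,\infty)\}$, every $f\in\ee$ satisfies $f=0$ a.e.\ on $N$. The task is now to upgrade ``each $f\in\ee$ vanishes a.e.\ on $N$'' to $\mu(N)=0$.

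The hard part, and the only place where the density hypothesis is used, is this last upgrade; it is settled by a short contradiction argument. Suppose $\mu(N)>0$. By $\sigma$-finiteness pick $A\subseteq N$ with $0<\mu(A)<\infty$, so that $\chi_A\in L^2(\mu)$. Since every $f\in\ee$ vanishes a.e.\ on $A$, we get the uniform bound $\|f-\chi_A\|^2\Ge\int_A|f-\chi_A|^2\D\mu=\mu(A)>0$ for all $f\in\ee$. This contradicts the density of $\ee$ in $L^2(\mu)$, so $\mu(N)=0$, i.e.\ $h\Ge 0$ a.e.\ $[\mu]$, as required.
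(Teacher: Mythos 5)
Your proof is correct and follows essentially the same route as the paper's: first one shows that $h\Ge 0$ a.e.\ on the support of each $f\in\ee$ (the paper reduces to the finite-measure sets $\{|f|\Ge 1/k\}$ via Chebyshev, you use a $\sigma$-finite exhaustion together with the real/imaginary decomposition of $h|f|^2$), and then one derives a contradiction from the density of $\ee$ on a finite-measure subset of the bad set. The only cosmetic difference is in the last step, where the paper passes to an a.e.-convergent subsequence of approximants of $\chi_A$ while you use the slightly more direct lower bound $\|f-\chi_A\|^2\Ge\mu(A)$.
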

   \begin{proof}
Set $\varXi_f = \{x \in X \colon |f(x)|
> 0\}$ for $f\in \ee$. First, we will show that
   \begin{align} \label{f1}
\text{$h(x) \Ge 0$ for $\mu$-a.e.\ $x \in \varXi_{f}$
and for every $f\in \ee$.}
   \end{align}
Indeed, fix $f\in \ee$ and set
$\varXi_{f,k}=\big\{x\in X\colon |f(x)| \Ge
\frac{1}{k}\big\}$ for $k \in \nbb$. It follows from
Chebyshev's inequality that $\varXi_{f,k} \in \ascr_*$
for $k\in \nbb$. Applying \eqref{num5+}, we deduce
that
   \begin{align*}
\int_{\varXi_{f,k}} |h||f|^2\D \mu < \infty \text{ and
} \int_{\varXi_{f,k} \cap \varDelta} h|f|^2\D \mu \Ge
0 \text{ for all } \varDelta\in \ascr \text{ and } k
\in \nbb.
   \end{align*}
This implies that $h \Ge 0$ a.e.\ $[\mu]$ on
$\varXi_{f,k}$ for every $k \in \nbb$. Since
$\varXi_{f,k} \nearrow \varXi_{f}$ as $k\to\infty$, we
conclude that $h \Ge 0$ a.e.\ $[\mu]$ on $\varXi_{f}$.

Set $\varSigma=\{x\in X \colon h(x) \Ge 0\}$. Suppose
that, contrary to our claim, $\mu(X \setminus
\varSigma) > 0$. As $\mu$ is $\sigma$-finite, there
exists a set $\varOmega \in \ascr$ such that
$\varOmega \subseteq X \setminus \varSigma$ and $0 <
\mu(\varOmega) < \infty$. This means that
$\chi_\varOmega \in L^2(\mu)$. Since $\ee$ is dense in
$L^2(\mu)$, there exists a sequence
$\{f_n\}_{n=1}^\infty \subseteq \ee$ which converges
to $\chi_\varOmega$ in $L^2(\mu)$. Passing to a
subsequence if necessary, we can assume that the
sequence $\{f_n\}_{n=1}^\infty$ converges a.e.\
$[\mu]$ to $\chi_\varOmega$, and thus
   \begin{align} \label{f3}
\text{$\lim_{n\to\infty} f_n(x) = \chi_\varOmega(x)=1$
for $\mu$-a.e.\ $x \in \varOmega$.}
   \end{align}
It follows from \eqref{f1} that $\mu(\varOmega \cap
\varXi_f) = 0$ for every $f\in\ee$. Applying this
property to $f=f_n$ ($n\in \nbb$), we see that
$\mu\Big(\varOmega \cap \bigcup_{n=1}^\infty
\varXi_{f_n}\big) = 0$, which means that
   \begin{align} \label{f2}
\text{$f_n(x)=0$ for all $n \in \nbb$ and for
$\mu$-a.e.\ $x \in \varOmega$.}
   \end{align}
Combining \eqref{f3} with \eqref{f2}, we conclude that
$\mu(\varOmega)=0$, a contradiction.
   \end{proof}
Applying Lemma \ref{l1.5+} to $h$ and $-h$, we see
that this lemma remains valid if ``$\Ge$'' is replaced
by ``$=$''.
   \begin{cor} \label{l1.5c}
Let $(X,\ascr,\mu)$ be a $\sigma$-finite measure space
and $\ee$ be a dense subset of $L^2(\mu)$ such that
   \begin{align} \label{num3}
\text{$f\chi_\varDelta \in \ee$ for all $f \in \ee$
and $\varDelta \in \ascr_*$.}
   \end{align}
If $h \colon X \to \cbb$ is an $\ascr$-measurable
function such that $\int_X |h||f|^2\D \mu < \infty$
and $\int_X h|f|^2\D \mu \Ge 0$ for all $f \in \ee$,
then $h \Ge 0$ a.e.\ $[\mu]$.
   \end{cor}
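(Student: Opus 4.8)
The plan is to deduce this directly from Lemma \ref{l1.5+}. The hypotheses of that lemma differ from the ones at hand only in that they require the integrability and positivity conditions \eqref{num5+} to hold over every set $\varDelta \in \ascr_*$, whereas here I am handed the corresponding conditions over all of $X$. So the entire task reduces to manufacturing the localized conditions of \eqref{num5+} out of the global ones, and the closure property \eqref{num3} is exactly the tool that makes this possible.

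First I would dispose of the integrability clause, which is immediate: for any $f \in \ee$ and any $\varDelta \in \ascr_*$ one has $\int_\varDelta |h||f|^2\D\mu \Le \int_X |h||f|^2\D\mu < \infty$ by monotonicity of the integral, so the finiteness requirement in \eqref{num5+} costs nothing. The positivity clause is where \eqref{num3} enters. Fixing $f \in \ee$ and $\varDelta \in \ascr_*$, the hypothesis \eqref{num3} guarantees that $g := f\chi_\varDelta$ again lies in $\ee$, so the global positivity assumption may be applied to $g$. Since $|g|^2 = |f|^2 \chi_\varDelta$, this reads $\int_\varDelta h|f|^2\D\mu = \int_X h|g|^2\D\mu \Ge 0$, which is precisely the positivity half of \eqref{num5+} for the pair $(f,\varDelta)$.

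Having verified both clauses of \eqref{num5+} for every $f \in \ee$ and every $\varDelta \in \ascr_*$, I would simply invoke Lemma \ref{l1.5+} to conclude that $h \Ge 0$ a.e.\ $[\mu]$. I do not expect any genuine obstacle in this argument; the only point worth flagging is that \eqref{num3} is used solely on sets of finite measure, which is exactly what Lemma \ref{l1.5+} consumes, so the stated hypothesis is used with nothing to spare.
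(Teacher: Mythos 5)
Your proof is correct and is precisely the argument the paper intends: the corollary is stated immediately after Lemma \ref{l1.5+} with no written proof, the reduction being exactly your observation that \eqref{num3} localizes the global positivity hypothesis (via $|f\chi_\varDelta|^2 = |f|^2\chi_\varDelta$) while monotonicity of the integral handles the finiteness clause, so that Lemma \ref{l1.5+} applies verbatim.
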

   \section{} \label{app2}
In this appendix, we describe (mostly without proofs)
some results from measure theory which play an
important role in our analysis of composition
operators. Let $(X, \ascr, \mu)$ be a fixed measure
space and let $\bscr\subseteq \ascr$ be a
$\sigma$-algebra. We say that $\bscr$ is {\em
relatively $\mu$-complete} if $\ascr_0 \subseteq
\bscr$, where $\ascr_0= \{\varDelta \in \ascr\colon
\mu(\varDelta)=0\}$ (cf.\ \cite{ha-wh}). It is easily
seen that the smallest relatively $\mu$-complete
$\sigma$-algebra containing $\bscr$, denoted by
$\bscr^\mu$, coincides with the $\sigma$-algebra
generated by $\bscr \cup \ascr_0$, and that
   \begin{align} \label{complascr}
\bscr^\mu = \{\varDelta \in \ascr \, | \, \exists
\varDelta^\prime \in \bscr \colon \mu(\varDelta
\vartriangle \varDelta^\prime)=0\}.
   \end{align}
The $\bscr^\mu$-measurable functions are described
below (cf.\ \cite[Lemma 1, p.\ 169]{Rud}).
   \begin{lem}\label{hahames}
A function $f\colon X \to \cbb$ is
$\bscr^\mu$-measurable if and only if there exists a
$\bscr$-measurable function $g\colon X \to \cbb$ such
that $f=g$ a.e.\ $[\mu]$.
   \end{lem}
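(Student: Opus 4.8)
The plan is to prove the two implications separately. The ``if'' direction is immediate from the description \eqref{complascr} of $\bscr^\mu$, while the ``only if'' direction is the substantive one, and I would obtain it by approximating $f$ by simple functions and replacing their level sets by nearby $\bscr$-sets.

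For the ``if'' part, suppose $g$ is $\bscr$-measurable and $f=g$ a.e.\ $[\mu]$. For every Borel set $B \subseteq \cbb$ the sets $f^{-1}(B)$ and $g^{-1}(B)$ differ by a $\mu$-null set, and $g^{-1}(B) \in \bscr$; hence $\mu(f^{-1}(B) \vartriangle g^{-1}(B))=0$ and \eqref{complascr} gives $f^{-1}(B) \in \bscr^\mu$. Thus $f$ is $\bscr^\mu$-measurable.

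For the ``only if'' part I would first reduce to the case $f \Ge 0$ by splitting $f$ into real and imaginary parts and each of those into positive and negative parts; since a finite union of $\mu$-null sets is $\mu$-null, $\bscr$-measurable representatives for the four nonnegative pieces recombine into one for $f$. So assume $f\Ge 0$ is $\bscr^\mu$-measurable and pick $\bscr^\mu$-measurable simple functions $s_n=\sum_{k=1}^{m_n} c_{n,k}\chi_{E_{n,k}}$ with $E_{n,k}\in\bscr^\mu$ and $s_n\nearrow f$ pointwise. By \eqref{complascr} choose $E_{n,k}^\prime \in \bscr$ with $\mu(E_{n,k}\vartriangle E_{n,k}^\prime)=0$ and set $t_n=\sum_{k=1}^{m_n} c_{n,k}\chi_{E_{n,k}^\prime}$, a $\bscr$-measurable simple function.

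The final step---and the one point I expect to require care---is the passage to the limit. I would put $N=\bigcup_{n,k}(E_{n,k}\vartriangle E_{n,k}^\prime)$, which is $\mu$-null, and define $g=\limsup_{n\to\infty} t_n$; this $g$ is $\bscr$-measurable, being a $\limsup$ of $\bscr$-measurable functions. The key observation is that for $x \in X\setminus N$ one has $\chi_{E_{n,k}}(x)=\chi_{E_{n,k}^\prime}(x)$ for all $n,k$, whence $t_n(x)=s_n(x)\to f(x)$ and therefore $g(x)=f(x)$; so $f=g$ a.e.\ $[\mu]$. The subtlety to avoid is that the exceptional sets $\{s_n\neq t_n\}$ need only be $\bscr^\mu$-measurable, not $\bscr$-measurable, so one must form $g$ directly as $\limsup_n t_n$ rather than by multiplying the $t_n$ by $\chi_{X\setminus N}$; measurability of $g$ then comes for free, and the agreement $g=f$ is read off on the co-null set $X\setminus N$. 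Finally, replacing $g$ by $g\,\chi_{\{g<\infty\}}$ (a $\bscr$-measurable, $\mu$-null modification) ensures $g$ is finite-valued.
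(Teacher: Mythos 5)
Your proof is correct, but note that the paper does not actually prove this lemma: it is stated in Appendix \ref{apms} with only a pointer to \cite[Lemma 1, p.~169]{Rud}, so you are supplying an argument where the authors defer to a reference. What you write is essentially the standard completion argument that Rudin himself uses: the ``if'' direction read off from \eqref{complascr}, and the ``only if'' direction via reduction to $f\Ge 0$, dyadic-type simple approximations $s_n\nearrow f$ with $\bscr^\mu$-level sets, replacement of each level set $E_{n,k}$ by a $\bscr$-set $E_{n,k}^\prime$ modulo a null set, and passage to the limit. Your one genuinely careful point is the right one: since $\bscr$ itself is not assumed relatively $\mu$-complete, the null set $N=\bigcup_{n,k}(E_{n,k}\vartriangle E_{n,k}^\prime)$ lies in $\ascr_0\subseteq\bscr^\mu$ but need not lie in $\bscr$, so cutting the $t_n$ down by $\chi_{X\setminus N}$ would destroy $\bscr$-measurability; defining $g=\limsup_n t_n$ and only afterwards truncating on the $\bscr$-measurable set $\{g=\infty\}\subseteq N$ sidesteps this correctly.

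One small caveat, which is really a feature of the lemma as stated rather than of your argument: in the ``if'' direction you need $f^{-1}(B)\in\ascr$ before \eqref{complascr} can be invoked, and $f^{-1}(B)=\bigl(g^{-1}(B)\setminus\{f\neq g\}\bigr)\cup\bigl(f^{-1}(B)\cap\{f\neq g\}\bigr)$ contains a piece that is merely a subset of a $\mu$-null set. If $(X,\ascr,\mu)$ is not complete this piece need not belong to $\ascr$, so one must either assume completeness or read ``$f=g$ a.e.\ $[\mu]$'' as implying that $f$ is already $\ascr$-measurable (which is how the lemma is used in the paper, where $f$ ranges over $L^2(\mu)$). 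It is worth a sentence acknowledging this convention.
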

By the above lemma $L^2(\mu|_{\bscr})$ is a subset of
$L^2(\mu)$ if and only if $\bscr=\bscr^{\mu}$. The
question of when $L^2(\mu|_{\bscr})=L^2(\mu)$ has a
simple answer ($\sigma$-finiteness is essential!).
   \begin{lem} \label{l2zup}
If $\mu$ is $\sigma$-finite and $\bscr$ is relatively
$\mu$-complete, then $L^2(\mu|_{\bscr})=L^2(\mu)$ if
and only if $\bscr=\ascr$.
   \end{lem}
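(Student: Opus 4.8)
The plan is to prove the nontrivial implication, since the converse is immediate: if $\bscr=\ascr$ then trivially $L^2(\mu|_{\bscr})=L^2(\mu)$. So I would assume $L^2(\mu|_{\bscr})=L^2(\mu)$; as $\bscr\subseteq\ascr$ the one inclusion $\bscr\subseteq\ascr$ is free, and the whole task is to show $\ascr\subseteq\bscr$. First I would unpack the hypothesis. Since $\bscr$ is relatively $\mu$-complete, the smallest relatively $\mu$-complete $\sigma$-algebra containing it is $\bscr$ itself, i.e.\ $\bscr=\bscr^{\mu}$; hence, by the discussion following Lemma~\ref{hahames}, $L^2(\mu|_{\bscr})$ is genuinely a subspace of $L^2(\mu)$, and the assumed equality says this inclusion is onto. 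Concretely, every $f\in L^2(\mu)$ coincides a.e.\ $[\mu]$ with some $\bscr$-measurable function.

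Second, I would test the hypothesis on indicators of finite-measure sets. Fix $\varDelta\in\ascr$ with $\mu(\varDelta)<\infty$, so that $\chi_\varDelta\in L^2(\mu)$. By the previous paragraph there is a $\bscr$-measurable $g\colon X\to\cbb$ with $g=\chi_\varDelta$ a.e.\ $[\mu]$. I would put $\varDelta'=g^{-1}(\{1\})\in\bscr$ and observe, by a pointwise inspection on the set $\{x\colon g(x)=\chi_\varDelta(x)\}$, that $\varDelta\vartriangle\varDelta'\subseteq\{x\colon g(x)\neq\chi_\varDelta(x)\}$, whence $\mu(\varDelta\vartriangle\varDelta')=0$. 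Then \eqref{complascr} gives $\varDelta\in\bscr^{\mu}=\bscr$; equivalently one may note $\varDelta\vartriangle\varDelta'\in\ascr_0\subseteq\bscr$ and write $\varDelta=\varDelta'\vartriangle(\varDelta\vartriangle\varDelta')\in\bscr$. Thus every finite-measure member of $\ascr$ lies in $\bscr$.

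Third, I would lift the finiteness restriction using $\sigma$-finiteness. Choosing $X_n\in\ascr$ with $X_n\nearrow X$ and $\mu(X_n)<\infty$, for arbitrary $\varDelta\in\ascr$ each $\varDelta\cap X_n$ has finite measure and hence lies in $\bscr$ by the second step, so $\varDelta=\bigcup_{n}(\varDelta\cap X_n)\in\bscr$ because $\bscr$ is a $\sigma$-algebra. This yields $\ascr\subseteq\bscr$ and finishes the argument.

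I do not expect a serious obstacle; the one point demanding care is the bookkeeping of equivalence classes — making explicit that relative $\mu$-completeness is exactly what upgrades ``$\varDelta$ agrees with a $\bscr$-set modulo a $\mu$-null set'' to ``$\varDelta\in\bscr$'', via \eqref{complascr}. The other essential ingredient is $\sigma$-finiteness, used both to put $\chi_\varDelta$ in $L^2(\mu)$ (only available for finite-measure sets) and in the final exhaustion; this matches the remark after the statement that the lemma fails when $\sigma$-finiteness is dropped.
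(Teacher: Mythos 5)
Your proof is correct and is essentially the paper's argument: the paper also tests the hypothesis on $\chi_\varDelta$ for finite-measure $\varDelta$ and invokes $\sigma$-finiteness to reduce to that case, merely phrasing it as a contradiction (a set $\varDelta\in\ascr\setminus\bscr$ of finite measure would give $\chi_\varDelta\in L^2(\mu)\setminus L^2(\mu|_{\bscr})$). You have simply written the contrapositive directly and made explicit the two steps the paper leaves implicit, namely the passage from ``$g=\chi_\varDelta$ a.e.\ $[\mu]$'' to ``$\varDelta\in\bscr^\mu=\bscr$'' via \eqref{complascr}, and the exhaustion $\varDelta=\bigcup_n(\varDelta\cap X_n)$.
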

   \begin{proof}
Suppose that $L^2(\mu|_{\bscr})=L^2(\mu)$ and
$\varDelta \in \ascr \setminus \bscr$. Since $\mu$ is
$\sigma$-finite, we may assume that
$\mu(\varDelta)<\infty$. Then $\chi_{\varDelta} \in
L^2(\mu)\setminus L^2(\mu|_{\bscr})$, a contradiction.
   \end{proof}
Given a transformation $\phi$ of $X$, we set
$\phi^{-1}(\ascr) = \{\phi^{-1}(\varDelta)\colon
\varDelta \in \ascr\}$.
   \begin{lem} \label{hahames-c}
Suppose that $\phi\colon X \to X$ is an
$\ascr$-measurable transformation and $f\colon X \to
\cbb$ is an arbitrary function. Then $f$ is
$(\phi^{-1}(\ascr))^\mu$-measurable if and only if
there exists an $\ascr$-measurable function $u\colon X
\to \cbb$ such that $f = u \circ \phi$ a.e\ $[\mu]$.
   \end{lem}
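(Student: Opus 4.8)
The plan is to reduce the statement to the classical factorization lemma (Doob--Dynkin) by first stripping off the relative completion via Lemma \ref{hahames}. Applying that lemma with $\bscr=\phi^{-1}(\ascr)$, so that $\bscr^\mu=(\phi^{-1}(\ascr))^\mu$, we see that $f$ is $(\phi^{-1}(\ascr))^\mu$-measurable if and only if there is a $\phi^{-1}(\ascr)$-measurable function $g\colon X \to \cbb$ with $f=g$ a.e.\ $[\mu]$. Since the conclusion ``$f=u\circ\phi$ a.e.\ $[\mu]$ for some $\ascr$-measurable $u$'' is itself an a.e.\ condition, the whole equivalence will follow once I establish the exact factorization statement: a function $g\colon X \to \cbb$ is $\phi^{-1}(\ascr)$-measurable if and only if $g = u\circ\phi$ for some $\ascr$-measurable $u\colon X \to \cbb$.

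For the ``if'' part of this reduced statement I would note that for every Borel set $B \subseteq \cbb$ one has $(u\circ\phi)^{-1}(B) = \phi^{-1}(u^{-1}(B))$, and $u^{-1}(B) \in \ascr$ by $\ascr$-measurability of $u$; hence $u\circ\phi$ is $\phi^{-1}(\ascr)$-measurable. Combined with Lemma \ref{hahames}, this already yields the ``if'' implication of the lemma: if $f=u\circ\phi$ a.e.\ $[\mu]$ with $u$ being $\ascr$-measurable, then $f$ agrees a.e.\ with the $\phi^{-1}(\ascr)$-measurable function $u\circ\phi$, so $f$ is $(\phi^{-1}(\ascr))^\mu$-measurable.

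For the ``only if'' part I would build $u$ by the usual ascent through simple functions. First, for $\varDelta \in \ascr$ the identity $\chi_{\phi^{-1}(\varDelta)} = \chi_{\varDelta}\circ\phi$ shows that the characteristic function of any set in $\phi^{-1}(\ascr)$ factors through $\phi$; by linearity the same holds for every $\phi^{-1}(\ascr)$-measurable simple function. For a nonnegative $\phi^{-1}(\ascr)$-measurable $g$ I would choose an increasing sequence $\{s_n\}_{n=1}^\infty$ of such simple functions with $s_n \nearrow g$ pointwise, write $s_n = t_n\circ\phi$ with each $t_n$ simple and $\ascr$-measurable, and set $u := \limsup_n t_n$. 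Then $u$ is $\ascr$-measurable and, because $\{s_n\}$ is increasing, $u\circ\phi = \limsup_n (t_n\circ\phi) = \lim_n s_n = g$ everywhere. Splitting a general complex $g$ into its real and imaginary parts, and each of these into positive and negative parts, then disposes of the general case; feeding the resulting $g=u\circ\phi$ back through Lemma \ref{hahames} completes the ``only if'' implication.

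The only delicate point is the passage to the limit. A naive pointwise limit of the $t_n$ need not exist at points of $X$ outside the range of $\phi$, so I use $\limsup_n t_n$ to obtain a genuinely $\ascr$-measurable function defined on all of $X$ whose composition with $\phi$ recovers $g$ \emph{exactly}; the monotonicity of $\{s_n\}$ is precisely what guarantees that the $\limsup$ collapses to the correct limit after composing with $\phi$. Everything else---the indicator and simple-function cases, the continuity of preimages under composition, and the real/imaginary and positive/negative decompositions---is routine.
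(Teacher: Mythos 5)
Your proposal is correct and is essentially the paper's own proof: the paper establishes the lemma exactly by combining Lemma \ref{hahames} (applied with $\bscr=\phi^{-1}(\ascr)$) with the exact factorization fact \eqref{well-k}, which it cites as well known, whereas you carry out the same reduction and additionally prove \eqref{well-k} by the standard indicator/simple-function/monotone-limit ascent. The only cosmetic point in your construction is that $\limsup_n t_n$ may equal $+\infty$ off the range of $\phi$, so to get a genuinely $\cbb$-valued $u$ you should redefine it to be $0$ on the set where it is infinite; this does not affect $u\circ\phi=g$, since on the range of $\phi$ the $\limsup$ equals the finite value of $g$.
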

   \begin{proof}
Applying the following well-known fact
   \begin{align} \label{well-k}
   \begin{minipage}{65ex}
a function $g\colon X \to \cbb$ is
$\phi^{-1}(\ascr)$-measurable if and only if there
exists an $\ascr$-measurable function $u\colon X \to
\cbb$ such that $g=u \circ \phi$,
   \end{minipage}
   \end{align}
and Lemma \ref{hahames} completes the proof.
   \end{proof}
Let $P_{\bscr}$ be the orthogonal projection of
$L^2(\mu)$ onto its closed subspace
$L^2(\mu|_{\bscr^\mu})$. Set $\bscr_{*}=\{\varDelta
\in \bscr\colon \mu(\varDelta)<\infty\}$. It follows
from Lemma \ref{hahames} that
   \begin{align} \label{CE-1}
   \begin{minipage}{75ex}
for every $f\in L^2(\mu)$ there exists a unique (up to
sets of measure zero) $\bscr$-measurable function
$\esf(f|\bscr)\colon X \to \cbb$ such that $P_\bscr f
=\esf(f|\bscr)$ a.e.\ $[\mu]$.
   \end{minipage}
   \end{align}
This and the fact that $\is{\chi_\varDelta}{f} =
\is{\chi_\varDelta}{P_\bscr f}$ for all $f\in
L^2(\mu)$ and $\varDelta \in \bscr_{*}$ yield
   \begin{align} \label{przegad}
\int_\varDelta f \D\mu = \int_\varDelta \esf(f|\bscr)
\D\mu, \quad f\in L^2(\mu), \, \varDelta \in
\bscr_{*}.
   \end{align}
Now suppose that $\mu|_{\bscr}$ is $\sigma$-finite. It
follows from \eqref{przegad} that $\esf(f|\bscr) \Ge
0$ a.e.\ $[\mu]$ whenever $f\Ge 0$ a.e.\ $[\mu]$. By
applying the standard approximation procedure, we see
that for every $\ascr$-measurable function $f\colon X
\to [0,\infty]$ there exists a unique (up to sets of
measure zero) $\bscr$-measurable function
$\esf(f|\bscr)\colon X \to [0,\infty]$ such that the
equality in \eqref{przegad} holds for every $\varDelta
\in \bscr$. Thus for every $\ascr$-measurable function
$f\colon X \to [0,\infty]$ and for every
$\bscr$-measurable function $g\colon X \to [0,\infty]$
we have
   \begin{align} \label{CE-3}
\int_X g f \D\mu= \int_X g \esf(f|\bscr) \D\mu.
   \end{align}
We call $\esf(f|\bscr)$ the {\em conditional
expectation} of $f$ with respect to $\bscr$ (cf.\
\cite{Rao}). Clearly,
   \begin{align} \label{CE-2}
   \begin{minipage}{70ex}
if $0\Le f_n \nearrow f$ are $\ascr$-measurable, then
$\esf(f_n|\bscr) \nearrow \esf(f|\bscr)$,
   \end{minipage}
   \end{align}
where $g_n \nearrow g$ means that for $\mu$-a.e.\
$x\in X$, the sequence $\{g_n(x)\}_{n=1}^\infty$ is
monotonically increasing and convergent to $g(x)$.

Concluding Appendix \ref{app2}, we note that if $\mu$
is $\sigma$-finite and $\phi\colon X \to X$ is a
nonsingular transformation such that $\hsf_\phi <
\infty$ a.e.\ $[\mu]$ (equivalently, $C_\phi$ is
densely defined), then the measure
$\mu|_{\phi^{-1}(\ascr)}$ is $\sigma$-finite (cf.\
Proposition \ref{clos}). Thus we may consider the
conditional expectation $\esf(\cdot|\phi^{-1}(\ascr))$
with respect to $\phi^{-1}(\ascr)$.

   \bibliographystyle{amsalpha}
   
   \end{document}